\newtheorem{theorem}{Theorem}
\newtheorem{definition}{Definition}
\newtheorem{lemma}[theorem]{Lemma}
\newtheorem{proposition}[theorem]{Proposition}
\newtheorem{assumption}[theorem]{Assumption}
\numberwithin{equation}{section}
\numberwithin{theorem}{section}
\numberwithin{definition}{section}
\definecolor{darkgreen}{cmyk}{1,0,1,.2}
\definecolor{m}{rgb}{1,0.1,1}
\DeclareMathOperator{\supp}{supp}   
\DeclareMathOperator{\trace}{trace} \DeclareMathOperator{\tr}{tr}
\DeclareMathOperator{\Ind}{Ind}
\DeclareMathOperator{\sgn}{sgn}
\DeclareMathOperator{\const}{const}
\DeclareMathOperator{\Tch}{Tch}
\newcommand{\forget}[1]{}
\def  \nuint {\raise10pt\hbox{$\nu$}\kern-6pt\int}
\newcommand\Tr{\operatorname{Tr}}
\def\N{\mathcal N}
\def \L{\mathcal L}
\def \A{\mathcal A}
\def \P{\mathcal P}
\newcommand\G{\mathcal G}
\newcommand\Q{\mathcal Q}
\newcommand\R{\mathcal R}
\newcommand\V{\mathcal V}
\def \L {{\cal L}}
\def \Sp {{\cal S}}
\newcommand\B{\mathcal B}
\newcommand\Bi{\B^\infty}
\def \J{\mathcal J}
\def \Ch {{\rm Ch}}
\def\Id{{\rm Id}}
\newcommand\ch{\operatorname{ch}}
\newcommand\cyl{\operatorname{cyl}}
\newcommand\ha{\frac12}
\newcommand\D{\mathcal D}
\newcommand\Di{D\kern-6pt/}
\newcommand\cDi{{\mathcal D}\kern-6pt/}
\newcommand\spi{S\kern-6pt/}
\newcommand \cspi{\Sp\kern-6pt/}
\newcommand\CC{\mathbb C}
\def \cal {\mathcal}
\newcommand\NN{\mathbb N}
\newcommand\RR{\mathbb R}
\newcommand\ZZ{\mathbb Z}
\newcommand\pa{\partial}
\newcommand\Ker{\operatorname{Ker}}
\def\tM{{\widetilde M}}
\newcommand{\dvd}{\widetilde{V}_{\D}}
\newcommand{\del}{\widetilde{e}_1}
\newcommand{\dvtd}{\widetilde{V}_{\D^\otimes}}
\global\let\c@equation=\c@theorem}
\date{}
\definecolor{darkgreen}{cmyk}{1,0,1,.2}
\definecolor{m}{rgb}{1,0.1,1}
\title[higher APS index theorem]{A note on the higher Atiyah-Patodi-Singer index theorem \\on Galois coverings}
\author{Alexander Gorokhovsky}
\address{Department of Mathematics, University of Colorado at Boulder}
\email{Alexander.Gorokhovsky@Colorado.EDU}
\author{Hitoshi Moriyoshi}
\address{Graduate School of Mathematics, Nagoya University}
\email{moriyosi@math.nagoya-u.ac.jp}
\author{Paolo Piazza}
\address{Dipartimento di Matematica, Sapienza Universit\`a di Roma}
\email{piazza@mat.uniroma1.it}
\subjclass[2010]{Primary: 58J20. Secondary: 58J22, 58J42, 19K56.}
\keywords{Galois coverings, groupoids,  group cocycles,
index classes, relative pairing, excision, Atiyah-Patodi-Singer higher index theory, higher eta invariants.}
\begin{document}

\begin{abstract}
Let $\Gamma$ be a finitely generated discrete group satisfying the rapid decay
condition.
We give a new proof of the higher Atiyah-Patodi-Singer theorem on a
Galois $\Gamma$-coverings, thus providing an explicit formula for the higher index associated to a group
cocycle $c\in Z^k (\Gamma;\CC)$ which is of polynomial growth with respect to a word-metric. Our new proof employs relative K-theory and relative cyclic cohomology
in an essential way.
\end{abstract}

\maketitle

\section{Introduction}\label{sect:intro}

Among the many  results in index theory that have followed the original
work of Atiyah and Singer, few have been as inspiring and central in the whole
field as the higher
index theorem on Galois $\Gamma$-coverings of Connes and Moscovici \cite{CM}. The theorem
itself can be seen as a far reaching generalization of the family index theorem
of Atiyah and Singer, in the sense that it can be reduced to it when $\Gamma=\ZZ^k$. This fundamental observation, due to Lustzig \cite{Lustzig}, and the heat-kernel proof
of the family index theorem, due to Bismut \cite{Bismut}, are at the basis of a different proof
of the Connes-Moscovici index theorem, which was given by Lott
in \cite{LottI}. This new proof  employs the superconnection formalism, suitably extended to the noncommutative framework,  in an essential way.
The work of Bismut and Lott opened the way to versions of these theorems
on manifolds with boundary, in the spirit of the seminal
work of Atiyah, Patodi and Singer \cite{APS1}. Contributions were given by Bismut-Cheeger
\cite{BC1} \cite{BC2}
and Melrose-Piazza \cite{MPI} \cite{MPII} for families and by Leichtnam and Piazza \cite{LPMEMOIRS},
based on a conjecture of Lott \cite{LottII},  for Galois coverings.
Geometric applications of these index theorems were given in \cite{LPBSMF},
\cite{LLP}, \cite{LPPSC},\cite{PSJNCG}, \cite{Wahl_higher_rho}.

In this article we give a new proof of the higher Atiyah-Patodi-Singer index theorem
on Galois $\Gamma$-coverings. This new proof is based in a crucial way  on the excision isomorphism
in K-theory and on the pairing between relative K-theory and relative cyclic cohomology; the $b$-calculus  of Melrose and his $b$-trace formula
also play an important role. The ideas we employ have been already exploited successfully in \cite{mp},
where a Godbillon-Vey index theorem on foliated bundles with boundary
was established. For more on the use of the  pairing between relative K-theory
and relative cyclic cohomology see also \cite{lmpflaum, LMP2}.
Our task here is to transfer and adapt
the ideas used in \cite{mp}
to the context of Galois
$\Gamma$-coverings, with $\Gamma$ a finitely generated discrete group
satisfying the (PC) and (RD) conditions  (Polynomial Cohomology and
Rapid Decay). Our main result provides a formula of Atiyah-Patodi-Singer type
for the higher index $\Ind_{(c,\Gamma)} (\D)$ associated to $c\in Z^k (\Gamma;\CC)$; here $\D$ is the Mishchenko-Fomenko operator associated
to a $\Gamma$-equivariant  Dirac-type operator $\widetilde{D}$ on the total space of a $\Gamma$-covering with boundary. We assume, as usual, that the associated Dirac
operator on the boundary, $\widetilde{D}_{\partial}$, is $L^2$-invertible.
The higher index $\Ind_{(c,\Gamma)} (\D)$ is obtained by pairing the index class
$\Ind (\D)$ with a suitably defined cyclic cocycle $\tau_c$ associated to $c$
(we shall of course be more precise later, we only want to give the main ideas here);
one
of the main steps in our proof is the production of a relative index class
$\Ind (\D,\D_{\partial})$ and of a relative cyclic cocycle $(\tau^r_c,\sigma_c)$
and the proof of the following equality:
$\Ind_{(c,\Gamma)} (\D)=\langle \Ind (\D,\D_{\partial}),[(\tau^r_c,\sigma_c)]
\rangle$; one crucial technical problem we have to face is the extendability property
for the relative cocycle $(\tau^r_c,\sigma_c)$. It should be noticed
that compared to the original result
in \cite{LPMEMOIRS} our theorem has the advantage of providing the boundary
correction term, i.e. the higher eta invariant $\eta_{(c,\Gamma)} (\D_{\partial})$,
in a more explicit form; indeed, our higher eta invariant comes already paired,
whereas in \cite{LPMEMOIRS} the higher eta invariant is the result of a pairing
between  a rather abstract object, the higher eta invariant of Lott,
$$\eta_{{\rm Lott}} (\D_{\partial})\in
\widehat{\Omega}_* (\B^\infty)/\overline{[\widehat{\Omega}_* (\B^\infty),\widehat{\Omega}_* (\B^\infty)]}$$
and a cyclic cocyle $t_c$ associated to $c$. In fact, an application of our index formula
is a precise expression for the number $\langle \eta_{{\rm Lott}} (\D_{\partial}),t_c \rangle$
appearing in \cite{LPMEMOIRS}.

%
%

The paper is organised as follows. We start in Section \ref{section:geometric}
with a few geometric preliminaries, including a brief discussion
on relative and absolute cyclic (co)homology. We then move on in Section \ref{sect:index}
and define the index class $\Ind(\D)$, see Subsection \ref{subsect:index}; we
express this index class in terms of the Wassermann projector in Subsection  \ref{subsection:other-index-classes}; in Subsection  \ref{subsect:excision}
we define the relative index class $\Ind(\D,\D_{\partial})$ and prove that
corresponds to $\Ind(\D)$ via excision. In Section \ref{section:higher-indeces}
we define the higher indeces and we compare them with the ones defined
by Leichtnam and Piazza in \cite{LPMEMOIRS}, proving that they are in fact equal.
In Section \ref{sect:relative-cocycles} we show how to define a {\em relative}
cyclic cocycle starting from a $c\in Z^k (\Gamma;\CC)$. In the following section, Section
\ref{sect:continuity}, we prove that under the two assumptions (PC) and (RD),
as in Connes-Moscovici \cite{CM}, our relative cocycles are continuous on the relevant algebra. Finally in Section \ref{sect:theorem} we state and prove our main
result, Theorem \ref{theo:main}

\medskip
\noindent
{\bf Acknowledgements.}
Part of this work was done while A.G. and H.M. were visiting the
Department of Mathematics of Universit\`a di Roma "La Sapienza".
Financial support for these visits was provided by the {\it Japan Society for the Promotion of Science (JSPS)}
and by {\it Istituto Nazionale di Alta Matematica "Francesco Severi"}. Further work was performed during visits
of A.G. and P.P. to Universit\'e Paris 7, in the {\it \'Equipe  Alg\`ebres d'op\'erateurs}; these two authors wish to extend their thanks to the \'Equipe Alg\`ebres d'op\'erateurs for the warm hospitality; financial support for these visits was provided  by
{\it
Fondation Sciences math\'ematiques de Paris} (A.G.) and by the {\it Ministero Universit\`a Ricerca Scientifica}
(PRIN {\it Spazi di moduli e teoria di Lie}) (P.P). A. G. was partially supported by NSF grant DMS-0900968.\\
We thank the anonymous referee for thorough reading and useful comments which helped us improve  our article.

\section{Preliminaries}\label{section:geometric}

\subsection{Manifolds with boundary and cylindrical ends}$\;$\\
Let
$(M_0,g_0)$ be a compact even dimensional riemannian manifold with boundary; the metric is assumed to be of product type in a collar
neighborhood $U\cong [0,2]\times \partial M_0$ of
the boundary: thus $g_0$ restricted to $U$ can be written through the above isomorphism as $dt^2 + g_{\partial}$,
with $g_{\partial}$ a riemannian metric on  $\partial M_0$. We consider the associated manifold with cylindrical ends
$M:= M_0\cup_{\partial M_0} \left(   (-\infty,0] \times \partial M_0 \right)$,
endowed with the extended metric $g$.
The coordinate along the cylinder will be denoted by $t$. We will also consider the $b$-version of $(M,g)$, obtained by
performing the change of variable $\log x=t$.
This is a $b$-riemannian
manifold with product $b$-metric $$\frac{dx^2}{x^2}+ g_{\partial}$$
 near the boundary. We shall freely pass from the $b$-picture to the
cylindrical-end picture, without employing two different notations.
(Our arguments will actually apply to the more general case of {\it exact}
$b$-metrics, or, equivalently, manifolds with asymptotic cylindrical ends; we shall not insist on this point.)
\footnote{In this article we basically give the $b$-calculus of Melrose, and its generalisations,  as known; the basic reference
is of course Melrose' book  \cite{Melrose}; short survey-articles that can be used as an introduction to the subject are,
for example,
\cite[Sections 1.3 and 4.2]{Mazzeo-Piazza-II}, \cite{loya-survey}, \cite{grieser-survey}.}

Let $\tM_0$ be a Galois $\Gamma$-covering of $M_0$; we let $\tilde g_0$ be the lifted metric. We also consider  $\partial \tM_0$, the
boundary of $\tM_0$.
We consider
$\tM:= \tM_0 \cup_{\partial \tM_0} \left(   (-\infty,0] \times \partial\tM_0 \right),$
endowed with the extended metric $\tilde{g}$ and the obviously extended $\Gamma$ action along the cylindrical
end.
Notice  that we obtain in this way a $\Gamma$-covering of manifolds with cylindrical ends
\begin{equation}\label{cylindrical-covering}
\Gamma\to \tM \to M
\end{equation}
With a small abuse  we  introduce the notation:
\begin{equation}\label{cyl-notation}
\cyl (\pa \tM):= \RR\times \partial \tM_0\;,\;\;\;
\cyl^- (\pa \tM):=(-\infty,0] \times \partial \tM_0
\end{equation}
and \begin{equation}\label{cyl-notation-bis}
\cyl^+ (\pa \tM):=
[0,+\infty) \times \partial \tM_0\,.
\end{equation}
(The abuse of notation is in  writing $\cyl (\pa \tM)$ for $ \RR\times \partial \tM_0$ whereas we should
really write $\cyl (\pa \tM_0)$.)

We assume the existence of  a bundle of Clifford modules $E$, endowed with a hermitian
metric $h$ for which the Clifford action is
unitary, and  equipped with a Clifford connection. We assume product structures near the boundary throughout.

\subsection{Dirac operators} $\;$\\
Associated to the above structures there is a generalized
Dirac operator $D$ on $M_0$ with product structure near the
boundary. We denote by $D_{\pa}$ the operator induced on the boundary.
We employ the same symbol for the associated $b$-Dirac operator on $M$.
We denote by $\widetilde{D}$ and $\widetilde{D}_{\partial}$ the $\Gamma$-equivariant $b$-Dirac operators on $\widetilde{M}$ and $\partial \widetilde{M}$.
We also have $D_{\cyl}$ on $\RR\times \pa M_0\equiv \cyl(\pa M)$
and  $\widetilde{D}_{\cyl}$
on $\RR\times \pa \widetilde{M}_0\equiv \cyl (\pa  \widetilde{M})$.
Next we consider $\Lambda:=C^*_r \Gamma$, the reduced group $C^*$-algebra
and  $\B^\infty\subset \Lambda$  the Connes-Moscovici algebra (we recall its definition
in Section \ref{sect:continuity}); we denote
by $\D_\Lambda$ and $\D_\infty$ the Dirac operators obtained by  twisting $D$ by the
Mishchenko bundle  $\V:=\tilde{M}\times_\Gamma \Lambda$
and the $\B^\infty$-Mishchenko
bundle $\V^\infty:=\tilde{M}\times_\Gamma \B^\infty$. Unless confusion should arise
we
denote the latter simply by $\D$.
We refer for example to \cite[Section 1]{LPGAFA} for more details
on these geometric preliminaries on Dirac operators.
We shall make the following fundamental assumption
\begin{assumption}\label{assumption:invertibility}
There exists a $\delta>0$ such that
\begin{equation}\label{invertibility}
{\rm spec}_{L^2} (\widetilde{D}_{\partial})\cap [-\delta,\delta]=\emptyset
\end{equation}
\end{assumption}
It should be noticed that because of the self-adjointness of $\widetilde{D}_{\partial}$,
assumption \eqref{invertibility} implies the $L^2$-invertibility of $\widetilde{D}_{\cyl}$.
A detailed proof of this implication is given, for example,  in \cite{LPETALE} (but in a  more general
situation) and we refer the reader to that paper for details (see page 188 there, between
(11) and (12), and then page 189); the   idea is to conjugate the operator  $\widetilde{D}_{\cyl}$,
$$\widetilde{D}_{\cyl}=\left(\begin{array}{cc} 0&-\frac{\pa}{\pa t} + \widetilde{D}_{\partial}\\
\frac{\pa}{\pa t}  + \widetilde{D}_{\partial}&0 \end{array} \right),
$$
 by Fourier transform
in $t$, $\mathcal{F}_{t\to\lambda}$, obtaining \begin{equation}\label{cyl-ind}
\left(\begin{array}{cc} 0&-i\lambda + \widetilde{D}_{\partial}\\
i\lambda + \widetilde{D}_{\partial}&0 \end{array} \right).
\end{equation}
In the $b$-calculus-picture \eqref{cyl-ind}  is the indicial family $I(\widetilde{D}_{\cyl},\lambda)$
of $\widetilde{D}_{\cyl}$ and it is obtained through Mellin transform of the corresponding cylindrical
$b$ operator.
The self-adjointness of $ \widetilde{D}_{\partial}$ implies that
\eqref{cyl-ind}, i.e. $I(\widetilde{D}_{\cyl},\lambda)$, is $L^2$-invertible
for each $\lambda\in\RR\setminus\{0\}$; the invertibility of  $\widetilde{D}_{\partial}$ then implies that
\eqref{cyl-ind} is $L^2$-invertible for each $\lambda\in\RR$. Conjugating back the inverse of \eqref{cyl-ind}
one obtains an operator which provides an $L^2$-inverse of $\widetilde{D}_{\cyl}$.
Essentially the same argument shows
the invertibility of $\D_{\cyl}$ in the
 $\B^\infty$-Mishchenko-Fomenko $b$-calculus with bounds (to be introduced in Section \ref{sect:index}).
 The details are as follows:  $\D_{\Lambda,\pa}$ is a self-adjoint regular unbounded operator on the
Hilbert $\Lambda$-module of $L^2$-sections of $(E \otimes \mathcal{V})_{\pa}$: $L^2 (\pa M_0, (E \otimes \mathcal{V})_{\pa})$. Hence,
 because of Assumption
\eqref{invertibility}, see for example \cite[Lemmas 2.1 and 3.1]{LLP} we know that  for each $\lambda\in\RR$
the operator
\begin{equation}\label{cyl-ind-Lambda}
\left(\begin{array}{cc} 0&-i\lambda + \D_{\Lambda,\pa}\\
i\lambda + \D_{\Lambda,\pa}&0 \end{array} \right).
\end{equation}
is invertible with inverse a $\Lambda$-pseudodifferential operator of order $-1$. Using the arguments
 in \cite{LPMEMOIRS} and in \cite[Appendix]{LPBSMF},
in turn based on the work of Lott \cite{Lott-torsion}, we conclude that the indicial family of
$\D_{\cyl}$, the latter being a differential operator of order 1 in the $\B^\infty$-Mishchenko-Fomenko $b$-calculus,  is invertible for each $\lambda\in \RR$, with inverse $I(\D_{\cyl},\lambda)^{-1} \in \Psi^{-1}_{\B^\infty}$ $\forall \lambda\in\RR$.
Proceeding now as in \cite{Melrose}, Sections 5.7 and 5.16, we conclude, using the inverse Mellin transform, that there exists an inverse of
$\D_{\cyl}$ and that this inverse is an element in $\B^\infty$-Mishchenko-Fomenko $b$-calculus with $\epsilon$-bounds, with $\epsilon<\delta$
and $\delta$ as in \eqref{invertibility}.

\subsection{Cyclic homology}$\;$\\
In this paper we use the periodic version of cyclic homology and cohomology. In this section we briefly recall definitions and notations we use. The general references for this material are \cite{Loday, Karoubi, lmpflaum, LMP2}.

\noindent

Let $\A$ be a complex unital algebra. Set $C_k(\A)= \A\otimes (\A/\mathbb{C} 1)^{\otimes k}$ for $k \ge 0$,
$C_k(\A)= 0$ for $k <0$. Since the algebras we consider will be Fr\'echet algebras, the tensor product is understood to be completed so that $C_k(\A)$ is a Fr\'echet space.
The space of normalized periodic cyclic chains of degree $ l \in \mathbb{Z}$ is defined by
\begin{equation*}
CC_{l}(\A)  =\prod \limits_{n \in \mathbb{Z}} C_{l+2n}(\A).
\end{equation*}
The boundary is given by $b+B$ where $b$ and $B$ are the Hochschild and Connes boundaries of the cyclic complex.
The homology of this complex is denoted $HC_{\bullet}(\A)$.

If $\A$ is not necessarily unital denote by $\A^+$ its unitalisation and set $CC_{l}(\A)=CC_{l}(\A^+)/CC_{l}(\mathbb{C})$.
For a unital $\A$ this complex is quasiisomorphic to the one previously described.

If
$I\colon \A \to \G$ is a  homomorphism of algebras, one can consider the relative cyclic complex  $CC_\bullet(\A, \G)$ which is the shifted cone of the morphism of cyclic complexes induced by $I$, see \cite{lmpflaum}. Explicitly,
\begin{equation*}
CC_k(\A, \G) = CC_k(\A) \oplus CC_{k+1}(\G),
\end{equation*}
with the differential given by
\begin{equation*}
(\alpha, \gamma) \mapsto ((b+B) \alpha, -I(\alpha) - (b+B) \gamma), \alpha \in  CC_k(\A), \gamma \in CC_{k+1}(\G).
\end{equation*}

In a dual manner we also consider the cyclic cohomology associated to $\A$. For a unital $\A$ and $k\ge 0$  $C^k(\A)$ denotes the space of continuous $k+1$ linear forms $\phi$ on $\A$ with the property that $\phi(a_0, \ldots, a_{i-1}, 1,a_i, \ldots a_{k-1})=0$, $1\le i\le k$. We set
$C^k(\A)=0$ for $k<0$.
\begin{equation*}
CC^{l}(\A)  =\bigoplus \limits_{n \in \mathbb{Z}} C^{l+2n}(\A).
\end{equation*}
and the differential is given by the (transposed of ) $b+B$.
for $k \ge 0$. There is a natural pairing $\langle \cdot, \cdot \rangle$ between $CC^{l}(\A)$ and $CC_{l}(\A)$ which induces a pairing
\begin{equation}\label{pairing-cyclic-coh}
\langle\; , \; \rangle_{HC}:  HC_\bullet(\A)\otimes HC^\bullet(\A)\to \CC
\end{equation}

If
$I\colon \A \to \G$ is a  homomorphism, the relative cohomological complex  $CC^\bullet(\A, \G)$  is given by
\begin{equation*}
CC^k(\A, \G) = CC^k(\A) \oplus CC^{k+1}(\G),
\end{equation*}
with the differential given by
\begin{equation*}
(\phi, \psi) \mapsto ((b+B) \phi -I^* \psi,  - (b+B) \psi), \phi \in  CC^k(\A), \psi \in CC^{k+1}(\G).
\end{equation*}

The pairing between  $CC_\bullet(\A, \G)$ and $CC^\bullet(\A, \G)$ is given by
\begin{equation}\label{pairing-cyclic-relative}
\langle (\alpha, \gamma), (\phi, \psi)\rangle = \langle \alpha, \phi \rangle+ \langle \gamma, \psi\rangle\,;
\end{equation}
it induces a pairing
\begin{equation}\label{pairing-cyclic-relative-coh}
\langle\; , \; \rangle_{HC}:  HC_\bullet(\A, \G)\otimes HC^\bullet(\A, \G)\to \CC
\end{equation}

Recall that for an algebra $\A$ we have  a Chern character in cyclic homology   $\ch \colon K_0(\A)\to HC_{0}(\A)$.
It is  defined by the following formula. Let $P, Q \in M_{n\times n}(\A^+)$ be two idempotents in $n\times n$ matrices of the
algebra $\A^+$ such that $P-Q \in M_{n\times n}(\A)$. Note that this pair of idempotents  represents a class $[P]-[Q] \in K_0(\A)$. Then
\begin{equation}\label{cyclicchern}
{\Ch\left(P-Q\right) = \tr(P-Q)+ \sum_{n=1}^{\infty} (-1)^n\frac{(2n)!}{n!} \tr\left( \left(P-\frac{1}{2}\right)\otimes P^{\otimes (2n)}-  \left(Q-\frac{1}{2}\right)\otimes Q^{\otimes (2n)} \right)}
\end{equation}
We will use the notation $\Ch\left(P-Q\right)$ for the cyclic cycle defined above and $\ch\left([P]-[Q]\right)$ for
its class in cyclic homology $HC_0(\A)$.

Assume  for a moment that $\A$   is a \emph{unital}   Fr\'echet algebra and $p_t$, $t \in [0,1]$ is a smooth path of idempotents in    $M_{n\times n}(\A)$.
Then
\begin{equation*}
\Ch(p_1) - \Ch(p_0) = (b+B) \Tch (p_t).
\end{equation*}
Here the components of the chain $\Tch =\sum_{n=0}^\infty \Tch_{2n+1}$ are given by
\begin{equation*}
\Tch_1(p_t)= -\int \limits_0^1 \tr (p_t\otimes [\dot{p}_t, p_t]) dt
\end{equation*}
\begin{equation*}
  \ \Tch_{2n+1}(p_t) = (-1)^n\frac{(2n)!}{n!} \int \limits_0^1 \sum \limits_{i=0}^{2n} (-1)^{i+1} \tr \left(p_t-\frac{1}{2}\right)\otimes p_t^{\otimes i}\otimes[\dot{p}_t, p_t] \otimes p_t^{\otimes (2n-i)}\, dt,
\end{equation*}
where $\dot{p}_t =\frac{dp_t}{dt}$.

If $\A$ and $\G$ are Fr\'echet algebras, unital or not,  and
$I \colon \A \to \G$ is  a  continuous homomorphism, then
an element in the relative group $K_0(\A,\G) =K_0(\A^+,\G^+)$ can be represented by a
 triple
$(e_1, e_0, p_t)$ with
$e_0$ and $e_1$  projections in  $M_{n\times n}(\A^+)$, and
$p_t$  a smooth family of projections in $M_{n\times n}(\G^+)$, $t\in [0,1]$, satisfying
$I (e_i)=p_i$ for $i=0,1$. Here $I$ denotes also  the induced homomorphism $\A^+ \to \G^+$.

There is a Chern character
$\ch \colon K_0(\A, \G) \to HC_{0}(\A, \G)$ given  by
\begin{equation*}
\ch([(e_1, e_0, p_t)]) = (\Ch (e_1 - e_0) , -\Tch(p_t)).
\end{equation*}
We will also use pairings between $K$-theory and cyclic cohomology given by
\begin{equation}\label{paring-abs}
\langle [e_1]-[e_0], [\tau] \rangle:=\langle \ch([e_1]-[e_0]), [\tau] \rangle_{HC}
\end{equation}
in the absolute case and by
\begin{equation}\label{paring-rel}
 \langle [(e_1, e_0, p_t)], [(\tau, \sigma)] \rangle:= \langle \ch ([(e_1, e_0, p_t)]), [(\tau, \sigma)] \rangle_{HC}.
\end{equation}
in the relative case.

\subsection{Noncommutative de Rham homology}\label{subsect:karoubi}$\;$\\
For a unital algebra $A$ let $\Omega^\bullet A$ be the free unital differential graded algebra generated by $A$. The differential
in $\Omega^\bullet A$ is denoted by $d$.
$\Omega^k A$  is the span of expressions of the form $a_0da_1 \ldots da_k$. Set $\overline{\Omega}^\bullet A =\Omega^\bullet A / [\Omega^\bullet A, \Omega^\bullet A]$. $d$ defines a map $\overline{\Omega}^\bullet A \to \overline{\Omega}^{\bullet+1} A$.
 Then Karoubi's homology  $\overline{H}_\bullet (A)$ is the cohomology of the complex  $\left( \overline{\Omega}^\bullet A ,d \right)$.
The Chern character $K_0(A) \to \prod_i \overline{H}_{2i} (A)$ is defined as follows. Let $e \in M_{n\times n}(A)$ be an idempotent.
Then Chern character of $[e]$ is represented by the form
\begin{equation*}
\Ch_K(e) = \trace \exp (-e de de).
\end{equation*}
Consider now the reduced cyclic complex $\overline{C}_{\lambda}^\bullet(A)$. In degree $\ell$ it consists of $(\ell+1)$-linear functionals $\phi$ on
$A$ satisfying $\phi(a_\ell, a_0, a_1, \ldots, a_{\ell-1}) =(-1)^\ell \phi(a_0, a_1, \ldots, a_{\ell-1}, a_\ell)$, $\phi(1, a_1, \ldots a_{\ell-1})=0$.
The differential is given by $b$. $\overline{C}_{\lambda}^\bullet(A)$ is naturally a subcomplex of $CC^\bullet(A)$, as $B$ vanishes on
$\overline{C}_{\lambda}^\bullet(A)$. The cohomology of $\overline{C}_{\lambda}^\bullet(A)$ is denoted $\overline{H}_{\lambda}^\bullet(A)$. By the above discussion there is a natural map $\iota \colon \overline{H}_{\lambda}^\bullet(A) \to HC^\bullet(A)$.

There is a natural pairing between $\overline{H}_\bullet (A)$ and $\overline{H}_{\lambda}^\bullet(A)$ given by
\begin{equation*}
\langle \sum a_0da_1\ldots da_\ell,  [\tau] \rangle_K := \ell! \sum \tau(a_0, a_1, \ldots, a_\ell).
\end{equation*}
Then for $[\tau] \in \overline{H}_{\lambda}^\bullet(A)$ we have
\begin{equation}\label{pairing-compatibily}
\langle \Ch_k(e), [\tau] \rangle_K = \langle [e], \iota [\tau] \rangle:= \langle \ch[e],\iota[\tau]
\rangle_{HC}.
\end{equation}
\subsection{Group cohomology}
Let $\Gamma$ be a discrete group. The homogeneous complex $C_{hom}^\bullet(\Gamma, \CC)$ computing the cohomology of $\Gamma$  can be described as follows:
\begin{equation*} C_{hom}^k(\Gamma, \CC) = \{\phi \colon \Gamma^{k+1} \to \CC\ |\ \phi(gg_0, \ldots gg_k) = \phi(g_0, \ldots, g_k)\}.\end{equation*}
The differential is given by
\begin{equation*}\partial \phi(g_0, \ldots, g_k) = \sum_i (-1)^i\phi(g_0,\ldots, g_{i-1}, g_{i+1}\ldots g_k) \end{equation*}
This complex is isomorphic to the nonhomogeneous complex
\begin{equation*} C_{nhom}^k(\Gamma, \CC) = \{c \colon \Gamma^{k} \to \CC\}\end{equation*}
with the differential
\begin{equation*}
\delta c( g_1, \ldots, g_{k+1}) = c(g_2, \ldots, g_{k+1}) + \sum (-1)^i c(g_1, \ldots, g_i g_{i+1} \ldots, g_{k+1}) +(-1)^{k+1}  c( g_1, \ldots, g_{k}).
\end{equation*}

The isomorphism of complexes is given by $I \colon C_{nhom}^\bullet(\Gamma, \CC) \to C_{hom}^\bullet(\Gamma, \CC)$:
\begin{equation*}
I(c)(g_0, \ldots, g_k) = c(g_0^{-1}g_1, g_1^{-1}g_2, \ldots, g_{k-1}^{-1}g_k)
\end{equation*}
with the inverse map
\begin{equation*}
I^{-1}(\phi)(g_1, \ldots, g_k) = \phi(1, g_1, g_1 g_2, \ldots, g_1\ldots g_k)
\end{equation*}

One can consider the subcomplex $C_{hom, \Lambda}^\bullet(\Gamma, \CC)  \subset C_{hom}^\bullet(\Gamma, \CC)$ defined by
\begin{equation*}
C_{hom, \Lambda}^\bullet(\Gamma, \CC)= \{\phi \in  \subset C_{hom}^\bullet(\Gamma, \CC) \ |\ \phi(g_{\sigma(0)}, g_{\sigma(1)}, \ldots, g_{\sigma(k)}) = \sgn \sigma \phi (g_0, \ldots, g_k) \text{ for every }  \sigma \in S_{k+1} \}.
\end{equation*}
The inclusion $C_{hom, \Lambda}^\bullet(\Gamma, \CC)  \subset C_{hom}^\bullet(\Gamma, \CC)$ is a quasiisomorphism.

In this paper we will be working with the complex
\begin{equation}\label{groupcoc} C^\bullet(\Gamma, \CC) \subset C_{nhom}^\bullet(\Gamma, \CC)\end{equation}
 which is the image of $C_{hom, \Lambda}^\bullet(\Gamma, \CC)$ under the map $I^{-1}$. $Z^\bullet(\Gamma, \CC) =\Ker\left( \delta \colon  C^\bullet(\Gamma, \CC)\to C^{\bullet+1}(\Gamma, \CC)\right)$ denotes the subspace of group cocycles.
We note several immediate properties of the cochains in $C^\bullet(\Gamma, \CC)$:
\begin{lemma}
\label{lemma:group-cohom} Let $c \in C^\bullet(\Gamma, \CC)$.
\begin{enumerate}
\item  $c$ is normalised, i.e. $c(g_1,\ldots, g_k)=0$ if $g_i=1$ for some $i$ or $g_1g_2\ldots g_k=1$.
\item Let $g_{ij} \in \Gamma$, $i$, $j=0,1,\ldots, m$ be such that $g_{ij} g_{jk}=g_{ik}$ for every $i$, $j$, $k$.
Then the expression $c(g_{i_0 i_1}, g_{i_1 i_2}, \ldots g_{i_{k-1} i_k})$ is antisymmetric in $i_0$, $i_1$,\ldots, $i_k$.
\item \label{c} If $g_1 g_2 \ldots g_{k+1}=1$, then
\begin{equation*}
c(g_2,\dots,g_{k+1})= (-1)^{k} c (g_1, \dots, g_{k}).
\end{equation*}
\end{enumerate}
\end{lemma}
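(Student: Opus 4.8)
The plan is to pass to the homogeneous picture and exploit antisymmetry together with $\Gamma$-invariance there. Given $c\in C^\bullet(\Gamma,\CC)$, put $\phi:=I(c)$; by the very definition of $C^\bullet(\Gamma,\CC)$ as the image of $C_{hom,\Lambda}^\bullet(\Gamma,\CC)$ under $I^{-1}$, the cochain $\phi$ is both $\Gamma$-invariant, $\phi(gg_0,\dots,gg_k)=\phi(g_0,\dots,g_k)$, and totally antisymmetric, $\phi(g_{\sigma(0)},\dots,g_{\sigma(k)})=\sgn\sigma\cdot\phi(g_0,\dots,g_k)$ for $\sigma\in S_{k+1}$; in particular, since we are over $\CC$, $\phi$ vanishes as soon as two of its arguments agree. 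Unwinding $c=I^{-1}(\phi)$ gives the single formula
$$c(g_1,\dots,g_k)=\phi\bigl(1,\,g_1,\,g_1g_2,\,\dots,\,g_1g_2\cdots g_k\bigr),$$
from which all three assertions are read off. Statement (1) is immediate: if some $g_i=1$ the arguments of $\phi$ in positions $i-1$ and $i$, namely $g_1\cdots g_{i-1}$ and $g_1\cdots g_i$, coincide, while if $g_1\cdots g_k=1$ the arguments in positions $0$ and $k$ are both $1$; either way $\phi$, hence $c$, vanishes.

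For statement (2), the key point is the telescoping identity $g_{i_0 i_1}g_{i_1 i_2}\cdots g_{i_{r-1} i_r}=g_{i_0 i_r}$, obtained by induction from $g_{ij}g_{jk}=g_{ik}$ (which also forces $g_{ii}=1$ and $g_{ji}=g_{ij}^{-1}$). Substituting $g_r\mapsto g_{i_{r-1} i_r}$ into the basic formula, telescoping the partial products, and using $g_{i_0 i_0}=1$, we get
$$c\bigl(g_{i_0 i_1},\dots,g_{i_{k-1} i_k}\bigr)=\phi\bigl(g_{i_0 i_0},\,g_{i_0 i_1},\,\dots,\,g_{i_0 i_k}\bigr).$$
Now $\Gamma$-invariance of $\phi$, applied with left translation by $g_{j i_0}$ and the relation $g_{j i_0}g_{i_0 i_l}=g_{j i_l}$, shows the right-hand side equals $\phi(g_{j i_0},g_{j i_1},\dots,g_{j i_k})$ for an arbitrary fixed index $j$; that is, the expression is independent of the ``basepoint'' $i_0$. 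Hence permuting $i_0,\dots,i_k$ by some $\sigma$ merely reorders the tuple $(g_{j i_0},\dots,g_{j i_k})$ accordingly, and antisymmetry of $\phi$ contributes the factor $\sgn\sigma$ — which is the asserted antisymmetry.

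Finally, for statement (3), assume $g_1\cdots g_{k+1}=1$, set $h_0:=1$, $h_r:=g_1\cdots g_r$ for $1\le r\le k+1$ (so $h_{k+1}=1=h_0$), and define $g_{rs}:=h_r^{-1}h_s$ for $r,s\in\{0,\dots,k+1\}$. These satisfy $g_{rs}g_{st}=g_{rt}$ and $g_{r-1,r}=g_r$, and moreover $g_{k,k+1}=h_k^{-1}=h_k^{-1}h_0=g_{k,0}$. By (2) the quantity $c(g_{i_0 i_1},\dots,g_{i_{k-1} i_k})$ is antisymmetric in $(i_0,\dots,i_k)$; the index sequence $(0,1,\dots,k)$ gives $c(g_1,\dots,g_k)$, the sequence $(1,2,\dots,k+1)$ gives $c(g_2,\dots,g_{k+1})$, and using $g_{k,k+1}=g_{k,0}$ the latter is rewritten with the sequence $(1,2,\dots,k,0)$. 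The permutation carrying $(0,1,\dots,k)$ to $(1,2,\dots,k,0)$ is a $(k+1)$-cycle, of sign $(-1)^k$, so (2) yields $c(g_2,\dots,g_{k+1})=(-1)^k\,c(g_1,\dots,g_k)$. I do not anticipate a genuine obstacle here: the whole argument is bookkeeping around the formula $c(g_1,\dots,g_k)=\phi(1,g_1,\dots,g_1\cdots g_k)$, and the only delicate points are keeping the $I$ versus $I^{-1}$ conventions straight, the telescoping identity in (2), and recognizing the relevant permutation in (3) as a $(k+1)$-cycle so as to land on the sign $(-1)^k$ — the basepoint-independence reformulation of (2) is precisely what makes that last step transparent.
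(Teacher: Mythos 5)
Your proof is correct. The paper states this lemma without proof, regarding the three properties as immediate from the definition of $C^\bullet(\Gamma,\CC)$ as $I^{-1}\bigl(C_{hom,\Lambda}^\bullet(\Gamma,\CC)\bigr)$, and your argument --- unwinding $c(g_1,\dots,g_k)=\phi(1,g_1,g_1g_2,\dots,g_1\cdots g_k)$ with $\phi=I(c)$ invariant and totally antisymmetric, using the telescoping identity and basepoint-independence for (2) and the $(k+1)$-cycle of sign $(-1)^k$ for (3) --- is precisely the intended verification.
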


\section{Index classes}\label{sect:index}

\subsection{The index class $\Ind_\infty (\D)$}\label{subsect:index} $\;$\\
 Let $\epsilon>0$ be strictly smaller than $\delta$, the width of the spectral gap for the boundary operator appearing in \ref{assumption:invertibility}.
 We introduce
 \begin{itemize}
 \item
 $A:=\Psi^{-\infty,\epsilon}_b (M,E) + \Psi^{-\infty,\epsilon} (M,E)$,
 the sum of the smoothing
operators in the  $b$-calculus with $\epsilon$-bounds and the residual
operators in the $b$-calculus with $\epsilon$-bounds.
\item $J:= \Psi^{-\infty,\epsilon} (M,E)$
\end{itemize}
We know that $A$ is an algebra
and  that $J$ is an ideal in $A$ (see \cite{Melrose}
and, for this particular result,  \cite[Theorem 4]{MPI}).

 \begin{itemize}
\item $G:=  \Psi^{-\infty,\epsilon}_{b,\RR^+} (\overline{N^+ (\pa M)},E) +
\Psi^{-\infty,\epsilon}_{\RR^+}  (\overline{N^+ (\pa M)},E)$, the $\RR^+$-invariant smoothing
operators in the $b$-calculus with $\epsilon$-bounds on the compactified
positive normal bundle to the boundary. Here we have abused notation and used $E$
to denote the extension to the normal bundle of the restriction of $E$ to the boundary.
\end{itemize}
We know, see \cite{Melrose}, that there is a short exact sequence of algebras
$$0\rightarrow J \rightarrow A\xrightarrow{I} G
\rightarrow 0$$
with $I$ denoting the map equal to the {\em indicial operator}
on $\Psi^{-\infty,\epsilon}_b (M,E)$ and equal to zero on $ \Psi^{-\infty,\epsilon} (M,E)$.

\smallskip
We then consider (we write MF for Mishchenko-Fomenko):
\begin{itemize}
\item  the algebra $\mathfrak{A}:= \Psi^{-\infty,\epsilon}_b (M,E\otimes
\V^\infty) + \Psi^{-\infty,\epsilon} (M,E\otimes \V^\infty)$,  the  sum of the smoothing
operators in the $\B^\infty$-MF $b$-calculus with $\epsilon$-bounds and the residual
operators in the $\B^\infty$-MF $b$-calculus with $\epsilon$-bounds;
\item the ideal  $\mathfrak{J}$ in $\mathfrak{A}$ equal to the residual
operators $\Psi^{-\infty,\epsilon} (M,E\otimes \V^\infty)$;
\item  the algebra $\mathfrak{G}$ of $\RR^+$-invariant smoothing
operators in the $\B^\infty$-MF $b$-calculus with $\epsilon$-bounds on the compactified
positive normal bundle to the boundary
\end{itemize}
Considering the map $I: \mathfrak{A}\to \mathfrak{G}$
equal to zero on the residual operators
and equal to the indicial operator on the smoothing
operators in the $\B^\infty$-MF $b$-calculus with bounds, we get
 a short exact sequence
$$0\rightarrow \mathfrak{J} \rightarrow \mathfrak{A}\xrightarrow{I} \mathfrak{G}
\rightarrow 0$$

One can prove, see \cite{LPMEMOIRS} and \cite[Appendix]{LPBSMF}, that $\D^+$ is invertible modulo elements in $\mathfrak{J}$; if $\Q$ is a parametrix with remainders $\mathcal{S}_\pm$ then we can consider
 the Connes-Skandalis projector
\begin{equation}\label{CS-projector}
P_{\Q}:= \left(\begin{array}{cc} \mathcal{S}_{+}^2 & \mathcal{S}_{+}  (I+\mathcal{S}_{+}) \Q\\ \mathcal{S}_{-}\D^+ &
I-\mathcal{S}_{-}^2 \end{array} \right).
\end{equation}
We obtain in this way an index class
\begin{equation}\label{CS-class}
\operatorname{CS}_\infty (\D):= [P_{\Q}] - [e_1]\in K_0 (\mathfrak{J})\;\;\text{with}\;\;e_1:=\left( \begin{array}{cc} 0 & 0 \\ 0&1
\end{array} \right)
\end{equation}
see, for example,  \cite{Co} (II.9.$\alpha$) and  \cite{CM} (p. 353).

We denote
by $\operatorname{CS}_{\Lambda} (\D)$ (recall that $\Lambda=C^*_r \Gamma$)
 the image of this class in $K_0 (C^*_r\Gamma)$
through the homomorphism $\iota_*$ associated to the natural inclusion $\iota: \mathfrak{J}\to \mathbb{K}(\mathcal{E}_{\rm MF})$;
here $\mathcal{E}_{\rm MF}$ is the $\Lambda$-Hilbert module given by
$L^2 (M,E\otimes \mathcal{V})$. It is clear that this is the Connes-Skandalis class
associated to a parametrix for $\D_{\Lambda}$. We shall often denote this class simply by
$\operatorname{CS} (\D)$; thus
\begin{equation}\label{c*-cs}
\operatorname{CS} (\D):= \iota_* (\operatorname{CS}_\infty (\D))
\end{equation}

As in Connes-Moscovici \cite[Section 5]{CM}, we can use a trivializing open cover
of $M_0$, with $k$ trivializing open sets,
a partition of
unity associated to it and a collection of local sections in order
to define an isometric embedding
\begin{equation}\label{isometric-u}
C^\infty (M,E\otimes \V^\infty)\xrightarrow{U}
C^\infty (M, E\otimes( \B^\infty\otimes \CC^k))
\end{equation}
with the trivializing open cover extended to $M$ in the obvious way.
Then $\theta (A):= U A U^*$ defines an algebra homomorphism
between the algebra $\mathfrak{A}$, i.e. $\Psi^{-\infty,\epsilon}_b (M,E\otimes
\V^\infty) + \Psi^{-\infty,\epsilon} (M,E\otimes \V^\infty)$, and the algebra $\A$ defined by
\begin{equation}\label{calligA}
\A:=
\Psi^{-\infty,\epsilon}_b (M,E\otimes (\B^\infty\otimes \CC^k)) + \Psi^{-\infty,\epsilon} (M,E\otimes (\B^\infty\otimes \CC^k))
\end{equation}
and obtained by considering the relevant MF-calculi with values in the
trivial  bundle $\B^\infty\otimes \CC^k$. \\
We obtain also
$$\J:= \Psi^{-\infty,\epsilon} (M,E\otimes (\B^\infty\otimes \CC^k))\quad\text{and}\quad
\G:= \Psi^{-\infty,\epsilon}_{b,\RR^+} (\overline{N^+ (\pa M)},E\otimes (\B^\infty\otimes \CC^k)) +
\Psi^{-\infty,\epsilon}_{\RR^+}  (\overline{N^+ (\pa M)},E\otimes (\B^\infty\otimes \CC^k))$$
and we know that there is a short exact sequence of
algebras
$$0\to \J\to \A\xrightarrow{I}  \G\to 0$$
We can similarly define a homomorphism
$\theta_{\cyl}: \mathfrak{G} \to \G$ and a simple argument with coverings shows that there exists a commutative
diagram
\begin{equation}\label{compatibility-theta}
\xymatrix{
0\ar[r]\ar[d] & \mathfrak{J}\ar[r]\ar[d]^{\theta} &\mathfrak{A} \ar[r]^{I} \ar[d]^{\theta}
& \mathfrak{G} \ar[r]\ar[d]^{\theta_{\cyl}} & 0\ar[d]\\
0\ar[r] & \mathcal{J}\ar[r] &\mathcal{A} \ar[r]^{I}
& \mathcal{G} \ar[r] & 0}
\end{equation}


Let $\Theta: K_0 (\mathfrak{J}) \to K_0 (\mathcal{J})$ be the homomorphism
defined by $\theta$; as explained in \cite{CM} this homomorphism is well-defined,
independent of the choices we have made in its definition.

\begin{definition}\label{def:index-class}
 Inspired directly by  \cite{CM}
we define
\begin{equation}\label{eq:index-class}
\Ind_\infty (\D):= \Theta (\operatorname{CS}_{\infty} (\D))\in K_0 (\J)
\end{equation}
where we recall that $\J=
\Psi^{-\infty,\epsilon} (M,E\otimes (\B^\infty\otimes \CC^k))$.
We can also define $\Ind (\D):= \iota_* (\Ind_\infty (\D)) \in K_0 (C^*_r \Gamma)$
with $\iota$ equal to the composition of inclusions
$$\Psi^{-\infty,\epsilon} (M,E\otimes(\B^\infty\otimes \CC^k))\to
\Psi^{-\infty,\epsilon} (M,E\otimes (C^*_r \Gamma\otimes \CC^k))
\to \mathbb{K}(\mathcal{E}_{{\rm MF}}^{\otimes})$$
with $\mathcal{E}_{{\rm MF}}^{\otimes}$ equal to the
$C^*_r \Gamma$-Hilbert module $L^2 (M, E\otimes (C^*_r \Gamma\otimes \CC^k))$.
\end{definition}

\subsection{The Connes-Moscovici idempotent}\label{subsection:other-index-classes}$\;$\\
There are  descriptions of the class $\operatorname{CS}_\infty (\D)\in
K_0 (\mathfrak{J})$, and thus of the index class $\Ind(\D)_{\infty}\in K_0 (\mathcal{J})$,
that are particularly useful in computations.

\noindent
First, for motivation, consider
a closed compact manifold $N$ and a Galois $\Gamma$-covering $\widetilde{N}$.
Consider  $\operatorname{CS} (\D)\in
K_0 (C^*_r \Gamma)$.
We can make  two specific
choices of parametrices for $\D^+$ in \eqref{CS-projector}:
\begin{equation}\label{qe+qw}
\Q_{e} :=  (I+ \D^- \D^+)^{-1} \D^-\quad\text{and}\quad \Q_{V}=
:= \frac{I-\exp(-\frac{1}{2} \D^- \D^+)}{\D^- \D^+} \D^+
\end{equation}
with $I- \Q_{e}\, \D^+ = (I+\D^- \D^+)^{-1}$, $ I-\D^+ \, \Q_{e}= (I+\D^+ \D^-)^{-1}$ and
$I-\Q_{V}\D^+ = \exp(-\frac{1}{2} \D^- \D^+)$, $I-\D^+ \Q_{V} =  \exp(-\frac{1}{2} \D^+ \D^-)$.\\
The first choice of parametrix produces
the graph projection
\begin{equation}\label{graph}
e_{\D}=\left(\begin{array}{cc} (I+\D^- \D^+)^{-1} & (I+\D^- \D^+)^{-1} \D^-\\ \D^+   (I+\D^- \D^+)^{-1} &
\D^+  (I+\D^- \D^+)^{-1} \D^- \end{array} \right).
\end{equation}
The choice of $\Q_{V}$ produces the
idempotent
\[V_{\D}=\left( \begin{array}{cc} e^{-\D^- \D^+} & e^{-\frac{1}{2}\D^- \D^+}
\left( \frac{I- e^{-\D^- \D^+}}{\D^- \D^+} \right) \D^-\\
e^{-\frac{1}{2}\D^+ \D^-}\D^+& I- e^{-\D^+ \D^-}
\end{array} \right)
\]
We call this the Connes-Moscovici idempotent (see \cite{CM}).

One can also consider
  the Wassermann projection $W_{\D}$,
\begin{equation}\label{wass}
W_{\D} := \left( \begin{array}{cc} e^{-\D^- \D^+} & e^{-\frac{1}{2}\D^- \D^+}
\left( \frac{I- e^{-\D^- \D^+}}{\D^- \D^+} \right)^{ \frac{1}{2}} \D^-\\
e^{-\frac{1}{2}\D^+ \D^-}\left( \frac{I- e^{-\D^+ \D^-}}{\D^+ \D^-} \right)^{ \frac{1}{2}} \D^+& I- e^{-\D^+ \D^-}
\end{array} \right)\,,
\end{equation}
homotopic to $V_{\D}$ via
\begin{equation}\label{CM-homotopy}
P_{\D} (s):= \left( \begin{array}{cc} e^{-\D^- \D^+} & e^{-\frac{1}{2}\D^- \D^+}
\left( \frac{I- e^{-\D^- \D^+}}{\D^- \D^+} \right)^{ \frac{1}{2}+s} \D^-\\
e^{-\frac{1}{2}\D^+ \D^-}\left( \frac{I- e^{-\D^+ \D^-}}{\D^+ \D^-} \right)^{ \frac{1}{2}-s} \D^+& I- e^{-\D^+ \D^-}
\end{array} \right)\,,
\end{equation}
with $s\in [0,1/2]$. See \cite{CM},
before Lemma (2.5).

The same formula \eqref{CM-homotopy} with $s\in [-1/2,1/2]$ defines a homotopy between
$V_{\D}$ and $V_{\D}^*$.  From the discussion above we obtain the following equality of elements in
$K_0 (C^*_r \Gamma)$:

\begin{equation}\label{classeq}
[P_{\Q}] -[e_1]
=[e_{\D}] -[e_1]= [V_{\D}]-[e_1]
= [V_{\D}^*]-[e_1]
\end{equation}
We will also need to consider  a symmetrized idempotent $\dvd:= V_{\D} \oplus V_{\D}^*$.
If we set  $\del:=e_1 \oplus e_1$, we have
\[
[\dvd] -[\del]=2([V_{\D}]-[e_1]).
\]

The well-known properties of the heat operator and of the pseudodifferential calculus  imply that  actually
  $[V_{\D}]-[e_1]$ and $[\dvd]-[\del]$ belong to $ K_0 (\Psi^{-\infty}(N,E\otimes \mathcal{V}^\infty))$.

Summarizing, we can set
\begin{equation}
\operatorname{CS}_\infty (\D):=[V_{\D}]-[e_1]= [V_{\D}^*]-[e_1]
\;\;\text{in}\;\;\,K_0 (\Psi^{-\infty}(N,E\otimes \mathcal{V}^\infty)).
\end{equation}
The last equality follows from the fact that $\mathcal{B}^\infty$ is closed under holomorphic functional calculus
in $C^*_r \Gamma$. Indeed, the images of the classes $[V_{\D}]-[e_1]$ and $[V_{\D}^*]-[e_1]$ are equal in $K_0 (C^*_r \Gamma)$,
see \eqref{classeq}, and the map
\[K_0 (\Psi^{-\infty}(N,E\otimes \mathcal{V}^\infty)) \to K_0 (C^*_r \Gamma)\]
is an isomorphism.

Consider now the isometric embedding
$C^\infty (M,E\otimes \V^\infty)\xrightarrow{U}
C^\infty (M, E\otimes( \B^\infty\otimes \CC^k))$ recalled in \eqref{isometric-u}; one can check  that
$U\D U^*= \D^\otimes$, with $\D^\otimes$
equal to the operator $D$ twisted by the trivial bundle $\B^\infty\otimes \CC^r$.
This implies that $\theta (V_{\D})= V_{\D^\otimes}$ and  $\theta (V_{\D}^*)= V_{\D^\otimes}^*$.
We obtain immediately:
\begin{equation}\label{theta-of-w}
\Ind_\infty (\D):=
\Theta (\operatorname{CS}_\infty (\D))=
[V_{\D^\otimes}]-[e_1]=[V_{\D^\otimes}^*]-[e_1]\;\;\text{in}\;\;K_0 (\Psi^{-\infty}(N,E\otimes (\mathcal{B}^\infty\otimes\CC^r)))
\end{equation}

Let us now pass to a $b$-manifold $M$ and to an operator $\D$ satisfying
the invertibility assumption on the boundary.
First of all,  recall how the (true) parametrix of $\D^+$ is constructed.
We shall be somewhat brief on this point since this procedure is explained in detail
in many places; in particular we shall not be particularly precise about the
gradings and the
identifications on the boundary.
One begins
by finding a symbolic parametrix $\Q_\sigma$ to $\D^+$, with remainders $\R^\pm_\sigma$.
Next, by fixing a cut-off function $\chi$ on the collar neighborhood of the
boundary, equal to 1 on the boundary, we define a section $s:\mathfrak{G} \to \mathfrak{A}$ to the
indicial homomorphism $I: \mathfrak{A}\to \mathfrak{G}$. $s$ associates to a translation invariant operator
$G$ on the cylinder an operator on the manifold with cylindrical end; the latter is obtained by pre-multiplying and post-multiplying
$G$
by the cut-off function $\chi$. The (true) parametrix
of $\D^+$ is defined as  $\Q^b=\Q_\sigma- \Q^\prime$
with $Q^\prime$ equal to
$s (( I(D^+)^{-1} I( \R^-_\sigma))$. Then, with this definition, one can check, using the $b$-calculus, that $\D^+ \Q^b=I-\mathcal{S}^-$ and
 $\Q^b \D^+=I-\mathcal{S}^+$
with $\mathcal{S}^\pm$  residual operators.
Now, going back to the  classes $\operatorname{CS} (\D)$
and $\operatorname{CS}_\infty (\D)$ it is clear that
we can define the Connes-Skandalis projection using the (true) parametrix obtained through the above procedure but
starting with the symbolic parametrices $\Q_e$ and $\Q_{V}$ appearing in
\eqref{qe+qw}.
Recall that the remainders $\R^\pm_\sigma$ of these two symbolic parametrices are given by the following two equations
$I- \Q_{e}\, \D^+ = (I+\D^- \D^+)^{-1}$, $ I-\D^+ \, \Q_{e}= (I+\D^+ \D^-)^{-1}$ and
$I-\Q_{V}\D^+ = \exp(-\frac{1}{2} \D^- \D^+)$, $I-\D^+ \Q_{V} =  \exp(-\frac{1}{2} \D^+ \D^-)$.\\
The construction just explained  produces then two different (true) parametrices
$\Q^b_e$ and $\Q^b_{V}$ and two different
projectors that we denote respectively $e^b_{\D}$ and
$V^b_{\D}$. Let us see the specific structure of these two projectors, starting with $e_{\D}^b$.
 Recall that   $I(\D^\pm)=\D^\pm_{\cyl}=\pm x\pa_x + \D_{\pa}$.
By definition
\begin{equation}
Q_{e}^\prime := - \chi ((\D_{\cyl}^+)^{-1}  (I+\D^+_{\cyl} \D^-_{\cyl})^{-1} ) \chi \,.
\end{equation}
Then, a simple computation gives 
\begin{align}
\Q_{e}^\prime \D^+ &= -\chi (I+\D^-_{\cyl} \D^+_{\cyl})^{-1}\chi  + \chi (\D^+_{\cyl})^{-1}  (I+\D^+_{\cyl} \D^-_{\cyl})^{-1}
\mathop{cl}(d\chi)\\  \D^+ \Q_{e}^\prime &= -\chi (I+\D^+_{\cyl} \D^-_{\cyl})^{-1}\chi - \mathop{cl}(d\chi) (\D^+_{\cyl})^{-1}
(I+\D^+_{\cyl} \D^-_{\cyl})^{-1}\chi\,.
\end{align}
This means that
$\mathcal{S}^+_{e}:= I-\Q^b_{e} \D^+=I- (\Q_{e}-\Q_{e}^\prime)\, D^+=(I+\D^- \D^+)^{-1} + \Q_{e}^\prime \D^+$, which we know from the $b$-calculus to be residual, is given by
\begin{equation}\label{s+e}
 \mathcal{S}^+_{e}  =
   (I+\D^- \D^+)^{-1}  -\chi (I+\D^-_{\cyl} \D^+_{\cyl})^{-1}\chi
   + \chi (\D^+_{\cyl})^{-1}  (I+\D^+_{\cyl} \D^-_{\cyl})^{-1}
\mathop{cl}(d\chi).
   \end{equation}
   Similarly
\begin{equation}\label{s-e}
\mathcal{S}^-_{e}=(I+\D^+ \D^-)^{-1}- \chi (I+\D^+_{\cyl} \D^-_{\cyl})^{-1}\chi - \mathop{cl}(d\chi) (\D^+_{\cyl})^{-1}  (I+\D^+_{\cyl} \D^-_{\cyl})^{-1}\chi
 \end{equation}
Substituting $\mathcal{S}^\pm_e$ and $\Q^b_{e}$ at the place of
$\mathcal{S}^\pm$ and $\Q$ into the expression of the Connes-Skandalis projection
\begin{equation*} \left(\begin{array}{cc} \mathcal{S}_{+}^2 & \mathcal{S}_{+}  (I+\mathcal{S}_{+}) \Q\\ \mathcal{S}_{-}\D^+ &
I-\mathcal{S}_{-}^2 \end{array} \right).
\end{equation*}
we obtain $e^b_{\D}$. The precise form of  $e^b_{\D}$  plays a role in the proof of the excision
correspondence \eqref{excision-graph} explained in Theorem \ref{theo:excision} below.

Similarly, by definition,
\begin{equation}
Q_{V}^\prime := - \chi ((\D_{\cyl}^+)^{-1}  \exp(-\frac{1}{2} \D^+_{\cyl} \D^-_{\cyl}) \chi \,.
\end{equation}
and this gives us
\begin{align}
\Q_{V}^\prime \D^+ &= -\chi  \exp(-\frac{1}{2} \D^-_{\cyl} \D^+_{\cyl}) \chi  + \chi (D^+_{\cyl})^{-1}   \exp(-\frac{1}{2} \D^+_{\cyl} \D^-_{\cyl})
\mathop{cl}(d\chi)\\  D^+ \Q_{W}^\prime &= -\chi  \exp(-\frac{1}{2} \D^+_{\cyl} \D^-_{\cyl})^{-1} )\chi - \mathop{cl}(d\chi)
(D^+_{\cyl})^{-1}  \exp(-\frac{1}{2} \D^+_{\cyl} \D^-_{\cyl})\chi\,.
\end{align}

This means that
$\mathcal{S}^+_{V}:= I-\Q^b_{V}\D^+=I- (\Q_{V}-\Q_{V}^\prime)\, D^+=\exp(-\frac{1}{2} \D^+ \D^- ) + \Q_{V}^\prime D^+ $, a  residual operator, is
given by
\begin{equation}\label{s+w}
\mathcal{S}^+_{V} = \exp(-\frac{1}{2} \D^+ \D^- )
     -\chi \exp(-\frac{1}{2} \D^+_{\cyl} \D^-_{\cyl})\chi
   + \chi (D^+_{\cyl})^{-1}   \exp(-\frac{1}{2} \D^+_{\cyl} \D^-_{\cyl})
\mathop{cl}(d\chi).
   \end{equation}
It is important to remark that this is precisely the expression in \eqref{s+e} once we substitute $ \exp(-\frac{1}{2} \D^+_{\cyl} \D^-_{\cyl})$
   for $(I+\D^+ \D^-)^{-1}$; this will play a role in the excision argument to be given below.
    A similar expression can be found for $\mathcal{S}^-_{V}$.
Substituting $\mathcal{S}^\pm_V$ and $\Q^b_{V}$ at the place of
$\mathcal{S}^\pm$ and $\Q$ into the expression of the Connes-Skandalis projection
we obtain $V^b_{\D}$.

One gets as before,
$[e^b_{\D}]-[e_1]= [V^b_{\D}]-[e_1]=[\left(V^b_{\D}\right)^*]-[e_1]\in K_0 (C^*_r \Gamma)\,.$
Thus we can set:
$\operatorname{CS} (\D):=[V^b_{\D}]-[e_1] \equiv [e^b_{\D}]-[e_1] $
in
$K_0 (C^*_r \Gamma).$
One can check, using the MF-$b$-calculus with bounds, that the $b$-Connes-Moscovici
projection $V^b_{\D}$
belongs to $\mathfrak{J}^+$; crucial, here, is the information that $\D_{\pa}$
is invertible in the $\B^\infty$-MF-calculus.
(The graph projector, on the other hand, belongs to $ \Psi^{-\infty,\epsilon} (M,E\otimes \V^\infty)^+$.)
We shall choose the incarnation of the class $\operatorname{CS}_\infty (\D)$ given by $[V^b_{\D}]-[e_1]$; put it differently
\begin{equation}
\operatorname{CS}_\infty (\D):=[V^b_{\D}]-[e_1] \;\;\text{in}\;\;\,K_0 (\mathfrak{J}).
\end{equation}

Notice, finally, that from the invertibility of $\D_{\pa}$ follows the invertibility
of the boundary operator of $\D^{\otimes}$ (this is a simple consequence of
$U^* U = \Id$); proceeding as in \eqref{theta-of-w} we  obtain
\begin{equation}\label{theta-of-bw}
\Ind_{\infty} (\D):=   \Theta\left( [V^b_{\D}]-[e_1]\right) = [V_{\D^\otimes}^*]-[e_1] \;\;\text{in}\;\;\,K_0 (\mathcal{J}).
\end{equation}

\subsection{The relative index class $\Ind_\infty (\D,\D_{\pa})$. Excision.}\label{subsect:excision} $\;$\\
  Let $0\to J\to A\xrightarrow{\pi} B\to 0$ a short exact sequence of Fr\'echet
  algebras. Recall that $K_0 (J):=
K_0 (J^+,J)\cong \Ker (K_0 (J^+)\to \ZZ)$
and that $K_0 (A^+,B^+)= K_0 (A,B)$.
For the definition of relative K-groups we refer, for example, to \cite{Bla}, \cite{hr-book}, \cite{lmpflaum}.
Recall that a relative $K_0$-element
for $ A\xrightarrow{\pi} B$ with unital algebras $ A, B$
is represented by a  triple $(P,Q,p_t)$ with $P$ and $Q$ idempotents in $M_{n\times n} (A)$
and $p_t\in M_{n\times n} (B)$ a
path of idempotents connecting $\pi (P)$ to $\pi (Q)$.
The excision  isomorphism
\begin{equation}\label{excision-general}
\alpha_{{\rm ex}}: K_0 (J)\longrightarrow K_0 (A,B)
\end{equation}
is given by
$\alpha_{{\rm ex}}([(P,Q)])=[(P,Q,{\bf c})]$
with ${\bf c}$ denoting the constant path (this is not necessarily the 0-path, given that
we are taking $J^+$).

In this paper we are interested in the relative groups $K_0 (\mathfrak{A},\mathfrak{G})$
and $K_0 (\mathcal{A},\mathcal{G})$ associated respectively to
$0\to \mathfrak{J}\to\mathfrak{A}\xrightarrow{I}\mathfrak{G}\to 0$ and
$0\to \mathcal{J}\to\mathcal{A}\xrightarrow{I}\mathcal{G}\to 0$.\\
Consider the Connes-Moscovici projections $V_{\D}$ and $V_{\D_{\cyl}}$
associated to $\D$ and $\D_{\cyl}$.
With $e_1:=\begin{pmatrix} 0&0\\0&1 \end{pmatrix}$ consider the  triple
\begin{equation}\label{pre-wassermann-triple}
(V_{\D}, e_1, V_{(t\D_{\cyl})})
\,, \;\;t\in [1,+\infty]\,,\;\;\text{ with }
q_t:= \begin{cases} V_{(t\D_{\cyl})}
\;\;\quad\text{if}
\;\;\;t\in [1,+\infty)\\
e_1 \;\;\;\;\;\;\;\;\;\;\;\;\;\,\text{ if }
\;\;t=\infty
 \end{cases}
\end{equation}
Similarly, we can
  consider the triple
\begin{equation}\label{wassermann-triple}
(V_{\D}^*, e_1, V_{(t\D_{\cyl})}^*)
\,, \;\;t\in [1,+\infty].
\end{equation}

\begin{proposition}\label{prop:relative-indeces}
Under assumption \eqref{invertibility} the Connes-Moscovici idempotents $V_{\D}$
and  $V_{\D_{\cyl}}$ define  by the formula
  \eqref{pre-wassermann-triple},
a relative class in $K_0 (\mathfrak{A},\mathfrak{G})$
associated with  the short exact sequence $0\to \mathfrak{J}\to\mathfrak{A}\xrightarrow{I}\mathfrak{G}\to 0$.
We  denote this class by $[V_{\D}, e_1, V_{(t\D_{\cyl})}]$.
Similarly  the adjoint idempotents define
  a relative class  $[V_{\D}^*, e_1, V_{(t\D_{\cyl})}^*]\in K_0 (\mathfrak{A},\mathfrak{G})$.  These two classes are equal.

\end{proposition}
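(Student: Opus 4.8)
The plan is to verify three things: (i) the triple $(V_{\D},e_1,q_t)$ with $q_t$ as in \eqref{pre-wassermann-triple} is a legitimate relative $K_0$-cycle for $0\to\mathfrak{J}\to\mathfrak{A}\xrightarrow{I}\mathfrak{G}\to 0$; (ii) the same for the adjoint triple; and (iii) the two classes coincide. For (i) I would first recall that $V_{\D}$ is an idempotent in $\mathfrak{A}^+$ (this was established in Subsection~\ref{subsection:other-index-classes}: $V^b_{\D}\in\mathfrak{J}^+\subset\mathfrak{A}^+$, and likewise $V_{\D}$, the non-$b$ version on $M$, is an idempotent in $\mathfrak{A}^+$), while $e_1$ is manifestly an idempotent in $\mathfrak{A}^+$. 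The point is then that $I(V_{\D})$ and $I(e_1)=e_1$ are connected inside $M_{n\times n}(\mathfrak{G}^+)$ by the path $t\mapsto q_t$. Here one computes $I(V_{\D})=V_{I(\D^+)}=V_{\D_{\cyl}}$, using that the indicial operator of the twisted $b$-Dirac operator is $\D_{\cyl}=\pm x\partial_x+\D_\partial$ and that $I$ is an algebra homomorphism, so it commutes with the holomorphic functional calculus defining $V$; this identifies $q_1=V_{\D_{\cyl}}=I(V_{\D})$. The only genuine work is at the endpoint $t=\infty$: one must show $V_{(t\D_{\cyl})}\to e_1$ as $t\to\infty$ in the topology of $M_{n\times n}(\mathfrak{G}^+)$, i.e. in the $\B^\infty$-MF $b$-calculus with $\epsilon$-bounds on $\overline{N^+(\partial M)}$. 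This is where Assumption~\ref{assumption:invertibility} enters: writing out the matrix entries of $V_{(t\D_{\cyl})}$ one sees $e^{-t^2\D^-_{\cyl}\D^+_{\cyl}}\to 0$, $I-e^{-t^2\D^+_{\cyl}\D^-_{\cyl}}\to I$, and the off-diagonal terms $e^{-\frac12 t^2\D^\mp_{\cyl}\D^\pm_{\cyl}}\frac{I-e^{-t^2\D^\mp_{\cyl}\D^\pm_{\cyl}}}{t^2\D^\mp_{\cyl}\D^\pm_{\cyl}}\,t\D^\pm_{\cyl}\to 0$, all in the appropriate Fréchet topology, precisely because the indicial family $I(\D_{\cyl},\lambda)$ is invertible for all $\lambda\in\RR$ (the spectral gap $\delta$ forces uniform decay of these functions of $\D_{\cyl}$). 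I expect this convergence — done carefully in the MF-$b$-calculus with bounds, controlling the relevant seminorms uniformly — to be the main obstacle; everything else is formal.

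For (ii) the argument is identical: $V_{\D}^*$ is an idempotent in $\mathfrak{A}^+$, $I(V_{\D}^*)=V_{\D_{\cyl}}^*$ since $I$ is a $*$-homomorphism, and the path $q_t^*=V_{(t\D_{\cyl})}^*$ connects it to $e_1^*=e_1$ with the same convergence at $t=\infty$ (take adjoints in the estimates of step (i)).

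For (iii), the equality of the two relative classes, the plan is to produce an explicit homotopy of relative cycles. I would invoke the homotopy \eqref{CM-homotopy}: the family $P_{\D}(s)$, $s\in[-1/2,1/2]$, is a path of idempotents in $\mathfrak{A}^+$ connecting $V_{\D}$ ($s=1/2$, giving $V_{\D}$ after the relevant identification) to $V_{\D}^*$ ($s=-1/2$), and applying the same construction to $t\D_{\cyl}$ gives a path $P_{t\D_{\cyl}}(s)$ of idempotents in $\mathfrak{G}^+$ with $I(P_{\D}(s))=P_{I(\D)}(s)=P_{\D_{\cyl}}(s)$; at the endpoint $t=\infty$ all of these degenerate to $e_1$ by the same estimate as before (the off-diagonal entries carry the factor $\left(\frac{I-e^{-\cdots}}{\cdots}\right)^{1/2\pm s}t\D^\pm_{\cyl}$, which still $\to 0$ uniformly in $s\in[-1/2,1/2]$ because the gap gives a uniform bound on the base). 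Thus $\bigl(P_{\D}(s),\,e_1,\,P_{(t\D_{\cyl})}(s)\bigr)$ is a homotopy through relative $K_0$-cycles from $(V_{\D},e_1,V_{(t\D_{\cyl})})$ to $(V_{\D}^*,e_1,V_{(t\D_{\cyl})}^*)$, so the two classes agree in $K_0(\mathfrak{A},\mathfrak{G})$. One must check that the whole family $P_{\D}(s)$ stays in $\mathfrak{A}^+$ (heat-kernel and $b$-pseudodifferential calculus, as in Subsection~\ref{subsection:other-index-classes}) and that $q_t(s):=P_{(t\D_{\cyl})}(s)$ is continuous in $(s,t)\in[-1/2,1/2]\times[1,\infty]$ with values in $\mathfrak{G}^+$ — again a seminorm estimate of the type already needed for step (i).
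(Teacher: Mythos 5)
Your proposal is correct and follows essentially the same route as the paper's (very terse) proof: existence of the two relative classes from the invertibility assumption plus standard heat-kernel/$b$-calculus estimates giving $V_{(t\D_{\cyl})}\to e_1$ at $t=\infty$, and equality via the homotopy \eqref{CM-homotopy} applied simultaneously to $\D$ and $t\D_{\cyl}$ (the paper phrases the needed membership of the fractional-power entries in the smooth algebras via closure of $\B^\infty$ under holomorphic functional calculus, which is the same verification you flag as a seminorm check). You simply spell out in more detail what the paper compresses into two sentences.
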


\smallskip
\begin{proof}
The existence of the two classes  follows from the invertibility assumption and well-known properties of the pseudodifferential calculus.
Their equality follows from the homotopy \eqref{CM-homotopy} and the fact that $\mathcal{B}^\infty$ is closed under the holomorphic functional calculus.
 \end{proof}
We set
\begin{equation}
\operatorname{CS}_\infty (\D,\D_{\pa}):= [V_{\D}, e_1, V_{(t\D_{\cyl})}] \in K_0 (\mathfrak{A},\mathfrak{G})
\end{equation}

\noindent
The homomorphisms $\theta$ and $\theta_{\cyl}$ define through \eqref{compatibility-theta}
a homomorphism $$\Theta_{{\rm rel}}: K_0 (\mathfrak{A},\mathfrak{G})\to K_0 (\mathcal{A},\mathcal{G})$$ which is well defined, independent of choices; we set
\begin{equation}\label{relative-index}
\Ind_{\infty} (\D,\D_{\pa}):= \Theta_{{\rm rel}}(\operatorname{CS}_\infty (\D,\D_{\pa}))\in
K_0 (\mathcal{A},\mathcal{G})\,.
\end{equation}
Notice that, as in \eqref{theta-of-bw},
\begin{equation}\label{was-otimes}
\Ind_{\infty} (\D,\D_{\pa})=[V_{\D^\otimes}, e_1, V_{(t\D^\otimes_{\cyl})}]= [V_{\D^\otimes}^*, e_1, V_{(t\D^\otimes_{\cyl})}^*]\,.
\end{equation}

\begin{theorem}\label{theo:excision}
Let $\alpha_{{\rm ex}}: K_0 (\mathfrak{J})\to K_0 (\mathfrak{A},\mathfrak{G})$ be the
excision isomorphism for the short exact sequence
$0\to \mathfrak{J}\to\mathfrak{A}\xrightarrow{I}\mathfrak{G}$. Then
\begin{equation}\label{excision-for-cs}
\alpha_{{\rm ex}}(\operatorname{CS}_\infty (\D))=
\operatorname{CS}_\infty (\D,\D_{\pa})\in K_0 (\mathfrak{A},\mathfrak{G}).
\end{equation}
Consequently, if $\beta_{{\rm ex}}: K_0 (\mathcal{J})\to K_0 (\mathcal{A},\mathcal{G})$
is the
excision isomorphism for the short exact sequence
$0\to \mathcal{J}\to\mathcal{A}\xrightarrow{I}\mathcal{G}$. Then
\begin{equation}\label{excision-for-ind}
\beta_{{\rm ex}}(\Ind_{\infty} (\D))=
\Ind_\infty (\D,\D_{\pa})\in K_0 (\mathcal{A},\mathcal{G}).
\end{equation}

\end{theorem}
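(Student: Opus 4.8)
The plan is to prove the identity \eqref{excision-for-cs} at the level of the $\B^\infty$-MF $b$-calculus and then deduce \eqref{excision-for-ind} by applying the homomorphism $\Theta_{\mathrm{rel}}$, using the compatibility diagram \eqref{compatibility-theta} and naturality of excision. For the first part, I would unwind both sides of \eqref{excision-for-cs} explicitly. By definition $\operatorname{CS}_\infty(\D)=[V^b_{\D}]-[e_1]\in K_0(\mathfrak{J})$, where $V^b_{\D}$ is the $b$-Connes--Moscovici projection built from the true parametrix $\Q^b_V=\Q_V-\Q_V'$ as described in Subsection \ref{subsection:other-index-classes}; since $\mathfrak{J}\into\mathfrak{A}$ and the excision map sends $[(P,Q)]$ to $[(P,Q,\mathbf{c})]$ with $\mathbf{c}$ the constant path, the left-hand side is the relative class $[(V^b_{\D}, e_1, \mathbf{c})]$ in $K_0(\mathfrak{A},\mathfrak{G})$. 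The right-hand side is $\operatorname{CS}_\infty(\D,\D_{\pa})=[V_{\D}, e_1, V_{(t\D_{\cyl})}]$. So the task reduces to exhibiting a homotopy of relative cycles between $(V^b_{\D}, e_1, \mathbf{c})$ and $(V_{\D}, e_1, V_{(t\D_{\cyl})})$.

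The key step is to understand the indicial operator $I(V^b_{\D})\in\mathfrak{G}$ and connect it to the cylindrical family $V_{(t\D_{\cyl})}$. First I would compute $I(V^b_{\D})$ directly: applying the indicial homomorphism to the formula for the Connes--Skandalis projection with $\mathcal{S}^\pm_V$ and $\Q^b_V$ inserted, and using that $I$ kills the residual correction terms in $\mathcal{S}^+_V$ (see \eqref{s+w}: the first term $\exp(-\tfrac12\D^+\D^-)$ has zero indicial operator while $\chi\exp(-\tfrac12\D^+_{\cyl}\D^-_{\cyl})\chi$ has indicial operator $\exp(-\tfrac12\D^+_{\cyl}\D^-_{\cyl})$), one finds that $I(V^b_{\D})$ is exactly the translation-invariant Connes--Moscovici idempotent $V_{\D_{\cyl}}$ on the cylinder. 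Thus $(V^b_{\D}, e_1, \mathbf{c})$ is \emph{not} quite a relative cycle with the constant path as stated naively — rather, excision of $[V^b_{\D}]-[e_1]\in K_0(\mathfrak{J})$ produces the relative class whose boundary path connects $I(V^b_{\D})=V_{\D_{\cyl}}$ to $I(e_1)=e_1$ inside $\mathfrak{G}$, and one must supply such a path. The path $t\mapsto V_{(t\D_{\cyl})}$, $t\in[1,+\infty]$, with $V_{(\infty\cdot\D_{\cyl})}:=e_1$, is precisely a path of idempotents in $\mathfrak{G}^+$ from $V_{\D_{\cyl}}$ to $e_1$; here the invertibility of $\D_{\cyl}$ in the $\B^\infty$-MF calculus (established in the Dirac operators subsection) is what guarantees both that $V_{(t\D_{\cyl})}$ lies in $\mathfrak{G}^+$ for all finite $t$ and that the limit as $t\to\infty$ is $e_1$ (the heat operator $\exp(-\tfrac12 t^2\D^+_{\cyl}\D^-_{\cyl})$ decays to $0$ since $\D^+_{\cyl}\D^-_{\cyl}$ is bounded below).

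Putting this together: the standard description of the excision isomorphism $\alpha_{\mathrm{ex}}$ for $0\to\mathfrak{J}\to\mathfrak{A}\xrightarrow{I}\mathfrak{G}\to 0$ sends $[(P,Q)]\in K_0(\mathfrak{J})=K_0(\mathfrak{J}^+,\mathfrak{J})$ to the relative class represented by $(P,Q,p_t)$ where $p_t$ is \emph{any} path in $\mathfrak{G}^+$ from $I(P)$ to $I(Q)$ coming from a lift — when $P,Q\in\mathfrak{J}^+$ one may take the constant path in $\mathfrak{G}=0$-component, but when we have replaced $[P]-[Q]$ by the equal class $[V^b_{\D}]-[e_1]$ with $V^b_{\D}\in\mathfrak{A}$ only, we must use the genuine path $I(V_{(t\D_{\cyl})})$. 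So the argument is: (i) $[V^b_{\D}]-[e_1]=\operatorname{CS}_\infty(\D)$ in $K_0(\mathfrak{J})$ (already established); (ii) $I(V^b_{\D})=V_{\D_{\cyl}}$ by the indicial computation; (iii) $t\mapsto V_{(t\D_{\cyl})}$ is an admissible boundary path; (iv) therefore $\alpha_{\mathrm{ex}}(\operatorname{CS}_\infty(\D))=[V_{\D}, e_1, V_{(t\D_{\cyl})}]=\operatorname{CS}_\infty(\D,\D_{\pa})$, using the equality $[V^b_{\D}]=[V_{\D}]$ from \eqref{classeq} and invariance of the relative class under homotopy of the interior idempotents rel boundary. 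Finally, \eqref{excision-for-ind} follows by applying $\Theta$ and $\Theta_{\mathrm{rel}}$ to \eqref{excision-for-cs} and invoking naturality of the excision isomorphism with respect to the morphism of short exact sequences \eqref{compatibility-theta}, i.e. $\beta_{\mathrm{ex}}\circ\Theta=\Theta_{\mathrm{rel}}\circ\alpha_{\mathrm{ex}}$.

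I expect the main obstacle to be step (ii): carefully verifying that the indicial operator of the full $b$-Connes--Skandalis projection $V^b_{\D}$ — with all four matrix entries involving $\mathcal{S}^\pm_V$, $\Q^b_V$ and their products — equals the cylindrical idempotent $V_{\D_{\cyl}}$ on the nose, rather than merely being homotopic to it. This requires tracking the cut-off functions $\chi$ and the terms $\mathop{cl}(d\chi)$ through the indicial homomorphism, and using that $I$ is multiplicative and annihilates $\Psi^{-\infty,\epsilon}(M,E\otimes\V^\infty)$; the remark following \eqref{s+w} (that $\mathcal{S}^+_V$ has the same structure as $\mathcal{S}^+_e$ with $\exp(-\tfrac12\D^+_{\cyl}\D^-_{\cyl})$ replacing $(I+\D^+_{\cyl}\D^-_{\cyl})^{-1}$) is the crucial bookkeeping device here. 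A secondary technical point is confirming that the family $V_{(t\D_{\cyl})}$ extends continuously (indeed smoothly in $1/t$) to $t=\infty$ with value $e_1$ as an element of $\mathfrak{G}^+$, which again rests on the spectral gap of $\D_{\pa}$ in the $\B^\infty$-calculus.
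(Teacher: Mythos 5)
Your reduction of \eqref{excision-for-ind} to \eqref{excision-for-cs} via $\beta_{\rm ex}\circ\Theta=\Theta_{\rm rel}\circ\alpha_{\rm ex}$ and the diagram \eqref{compatibility-theta} is exactly what the paper does, and is fine. The problem is your step (ii), on which the whole first part of your argument rests: the claim that $I(V^b_{\D})=V_{\D_{\cyl}}$ is false. In \eqref{s+w} the first term $\exp(-\tfrac12 \D^+\D^-)$ is \emph{not} residual and does \emph{not} have vanishing indicial operator; it is a smoothing operator in the $b$-calculus whose indicial operator is precisely $\exp(-\tfrac12 \D^+_{\cyl}\D^-_{\cyl})$, and it is the cancellation of this with $I\bigl(\chi\exp(-\tfrac12\D^+_{\cyl}\D^-_{\cyl})\chi\bigr)$ that makes $\mathcal{S}^{\pm}_{V}$ residual, as stated in the text. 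Consequently all entries of $V^b_{\D}-e_1$ lie in $M_2(\mathfrak{J})$, i.e. $V^b_{\D}\in\mathfrak{J}^+$ (the paper says this explicitly), and $I(V^b_{\D})=e_1$. So the excised class really is $[(V^b_{\D},e_1,\mathbf{c})]$ with $\mathbf{c}$ the \emph{constant} path at $e_1$ — exactly as the definition of $\alpha_{\rm ex}$ gives and as is used again in Section \ref{sect:theorem} — and your "corrected" description of the excision map, with boundary path from $V_{\D_{\cyl}}$ to $e_1$, is based on a wrong premise.

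Once this is fixed, your step (iv) no longer closes the argument: you would need to pass from $(V^b_{\D},e_1,\mathbf{c})$ to $(V_{\D},e_1,V_{(t\D_{\cyl})})$, and since $I(V^b_{\D})=e_1$ while $I(V_{\D})=V_{\D_{\cyl}}\neq e_1$, there is no homotopy of the interior idempotents \emph{rel boundary}; moreover \eqref{classeq} is an equality of classes in $K_0(C^*_r\Gamma)$ for the closed case and does not identify $[V^b_{\D}]$ with anything involving $V_{\D}$ inside $K_0(\mathfrak{J})$ or $K_0(\mathfrak{A},\mathfrak{G})$. The genuinely missing ingredient — and the actual content of the paper's proof — is an explicit homotopy of \emph{relative cycles} in which the interior idempotent and the boundary path are deformed simultaneously, interpolating between the true-parametrix projector $V^b_{\D}$ (boundary data constant at $e_1$) and the Connes--Moscovici projector $V_{\D}$ (boundary data the path $V_{(t\D_{\cyl})}$, $t\in[1,+\infty]$, which ends at $e_1$ by invertibility of $\D_{\cyl}$). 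The paper obtains this by adapting the lengthy computation of \cite{mp} for the graph projection, using the structural observation after \eqref{s+w} that $\mathcal{S}^{\pm}_{V}$ has the same form as $\mathcal{S}^{\pm}_{e}$ with the heat operator replacing the resolvent; your proposal contains no substitute for this homotopy, so as written it has a genuine gap (and an incorrect indicial computation feeding into it).
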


\begin{proof}
It is sufficient to prove  \eqref{excision-for-cs} which can be  rewritten as
$$\alpha_{{\rm ex}}([V^b_{\D}]-[e_1])=[V_{\D}, e_1, V_{(t\D_{\cyl})}]\,.$$
Indeed, if we can give an argument justifying this equality, then we can
also prove that
$$\beta_{{\rm ex}}([V^b_{\D^\otimes}]-[e_1])= [V_{\D^\otimes}, e_1, V_{(t\D^\otimes_{\cyl})}]\,;$$
on the left we have $\beta_{{\rm ex}}(\Ind_{\infty} (\D))$ whereas on the right
we have $\Theta_{{\rm rel}} ([V_{\D}, e_1, V_{(t\D_{\cyl})}])$ which is
precisely $ \Theta_{{\rm rel}}(\operatorname{CS}_\infty (\D,\D_{\pa}))$ ; thus
$$\beta_{{\rm ex}}(\Ind_{\infty} (\D))=
  \Ind (\D,\D_\infty)\,.$$
as required.

In order to show the equality
$\alpha_{{\rm ex}}([V^b_{\D}]-[e_1])=[V_{\D}, e_1, V_{(t\D_{\cyl})}]$
we can adapt the proof of the equality
\begin{equation}\label{excision-graph}
\alpha_{{\rm ex}}([e^b_{\D}]-[e_1])=[e_{\D}, e_1, e_{(t\D_{\cyl})}],
\end{equation}
 given in \cite{mp}, keeping in mind the remark given right after \eqref{s+w}.
 It is here that the specific structures of $e^b_{\D}$ and $V^b_{\D}$ are used.
 Since the details are  elementary but somewhat lengthy we omit them.
 \end{proof}

\section{Cyclic cocycles and higher indices}\label{section:higher-indeces}

Given a group cohomology class $\xi$ of $H^k (\Gamma; \CC)$,
we choose a representative cocycle   $c$  in $C^k(\Gamma; \CC)$.
Thus, see Lemma \eqref{lemma:group-cohom} and the discussion
preceding it,  $c$ is normalized,  namely:
$c(g_1, g_2, \dots , g_k)= 0$ if any $g_i=1$ or  $g_1 g_2 \cdots g_k=1$.
 Consider the algebra $\J$ with $r=1$; an element $S\in \J$
 is, in particular, a continuous section of the bundle ${\rm END} (E)\otimes \B_\infty$  on $M\times M$.
Equivalently, from the inclusion $\Psi^{-\infty,\epsilon} (M,E\otimes\B_\infty)\subset
\Psi^{-\infty,\epsilon} (M,E\otimes C^*_r \Gamma)$ and the fact  that $C^*_r \Gamma$
is contained in $\ell^2 (\Gamma)$, we can see that
$S$ is a function on $\Gamma$ with values in
$ \Psi^{-\infty,\epsilon} (M,E) $, denoted
$\Gamma\ni g \to S(g)\in  \Psi^{-\infty,\epsilon} (M,E)$.
We shall have to be precise about the continuity properties of this function, but for the time being we work on the dense subalgebra $\J_f$ of $\J$ given by the elements
of compact support in $\Gamma$; put it differently we work with the algebraic tensor product
$$\J_f:= \Psi^{-\infty,\epsilon} (M,E)\otimes \CC\Gamma \subset \J\,.$$
Before passing to the next definition, recall that elements in
$\Psi^{-\infty,\epsilon} (M,E)$ are trace class on $L^2$. Hence it makes sense to give the following

\begin{definition}
For  $S_i\in \J_f$ we set
$$
\tau_{c} (S_0 + \omega \cdot 1 , S_1, \dots S_k)
=\sum_{g_0 g_1 \cdots g_k=1}\, \Tr (S_0 (g_0) S_1 (g_1) \cdots  S_k (g_k))
c (g_1,g_2,\dots,g_k).
$$
\end{definition}

\noindent
We know, see \cite{Co}, that $\tau_c$ defines a
 cyclic cocycle for $\J_f$.

\begin{assumption}\label{assumption}(Extendability)
$\tau_c$ extends from $\J_f$ to $\J$.
 \end{assumption}

 \begin{proposition}\label{prop:classic}
 If $\Gamma$ is Gromov hyperbolic then we can choose a representative $c$ of $\xi$
 so that $\tau_c$ extends.  \end{proposition}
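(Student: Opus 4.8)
The plan is to reduce the statement to the classical fact that, for a Gromov hyperbolic group, every class in $H^k(\Gamma;\CC)$ admits a representative bounded cocycle (in the sense of having polynomial — indeed bounded — growth with respect to a word metric), and then to feed this growth control into an estimate showing that $\tau_c$ is a continuous multilinear functional on $\J$. First I would recall that for $\Gamma$ Gromov hyperbolic, by a theorem of Gromov (with proofs due to Mineyev, and in the form needed here already used in work of Connes–Moscovici, Leichtnam–Piazza, etc.), every group cohomology class of degree $k\ge 1$ has a representative $c\in Z^k(\Gamma;\CC)$ which is \emph{bounded}, i.e. $\sup_{g_1,\dots,g_k}|c(g_1,\dots,g_k)|<\infty$; one may further arrange, using the averaging/antisymmetrization that defines the complex $C^\bullet(\Gamma,\CC)$ in \eqref{groupcoc}, that $c$ lies in that complex and is normalized as in Lemma \ref{lemma:group-cohom}. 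A bounded cocycle is a fortiori of polynomial growth, so the hypotheses of the later sections (PC) apply, but for the present extendability statement boundedness is exactly what makes the estimate elementary.

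Next I would estimate $\tau_c$. For $S_0,\dots,S_k\in\J_f$, write out
\[
\tau_c(S_0,\dots,S_k)=\sum_{g_0g_1\cdots g_k=1}\Tr\bigl(S_0(g_0)S_1(g_1)\cdots S_k(g_k)\bigr)\,c(g_1,\dots,g_k),
\]
and bound $|c(g_1,\dots,g_k)|\le C$ using boundedness. The key point is then to control $\sum|\Tr(S_0(g_0)\cdots S_k(g_k))|$ by a product of suitable Schatten-type norms of the $S_i$. Since each $S_i(g)$ is a smoothing ($b$-)operator with $\epsilon$-exponential bounds, I would introduce for $S\in\J$ the norms $\nu_m(S):=\sum_{g\in\Gamma}(1+|g|)^m\|S(g)\|_{\mathcal L^1}$ (trace-norm on $L^2(M,E)$ of the $b$-smoothing operator, which is finite by the remark preceding the definition), and observe these are among the seminorms defining the Fréchet topology of $\J$. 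Using $|\Tr(AB\cdots)|\le \|A\|_{\mathcal L^1}\prod\|\cdot\|_{\rm op}\le\prod_i\|S_i(g_i)\|_{\mathcal L^1}$ together with the constraint $g_0=(g_1\cdots g_k)^{-1}$, the sum over $(g_1,\dots,g_k)\in\Gamma^k$ of $\prod_{i=0}^k\|S_i(g_i)\|_{\mathcal L^1}$ is dominated (by submultiplicativity of word length and a standard counting/convolution argument) by a finite product $\nu_{m_0}(S_0)\prod_{i\ge 1}\nu_{m_i}(S_i)$ for suitable $m_i$; boundedness of $c$ removes any extra weight needed for $c$ itself. This yields $|\tau_c(S_0,\dots,S_k)|\le C\prod_i\nu_{m_i}(S_i)$ on $\J_f$, hence $\tau_c$ extends by density to a continuous cochain on $\J$, and since $\J_f$ is dense and the cocycle identity is closed, the extension is again a cyclic cocycle.

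The main obstacle I expect is not the cohomological input (which is the cited Gromov/Mineyev boundedness theorem, used as a black box) but making the convergence estimate clean: one must pass from the rough bound $\|S_i(g_i)\|_{\mathcal L^1}$ to a genuinely summable expression over $\Gamma^k$, which requires knowing that elements of $\J=\Psi^{-\infty,\epsilon}(M,E\otimes\B^\infty)$ have $\Gamma$-Fourier coefficients decaying faster than any polynomial in the word length — this is where the structure of the Connes–Moscovici algebra $\B^\infty$ (rapid decay) enters, and where one should be careful that the $b$-calculus with $\epsilon$-bounds does not spoil trace-class estimates near the cylindrical end. I would handle this by invoking the continuity properties of $\tau_c$ established in Section \ref{sect:continuity} in the interior (absolute) case, of which this is the simplest instance, or, if one wants a self-contained argument here, by combining the Haagerup-type inequality for $\Gamma$ hyperbolic with the standard fact that trace-norms of $b$-smoothing kernels with exponential off-diagonal and cylindrical decay define Schwartz-type seminorms. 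Once that summability is in hand the rest is routine.
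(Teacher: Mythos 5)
Your overall strategy — choose a growth-controlled representative of $\xi$ and then prove a continuity estimate for $\tau_c$ on $\J$ from the decay of the $\Gamma$-coefficients — is the paper's strategy: the paper proves this proposition simply by forward reference to Proposition \ref{prop:extension}, together with the facts that Gromov hyperbolic groups satisfy (RD) and that every class in $H^\bullet(\Gamma;\CC)$ admits a polynomially bounded representative cocycle. So your fallback option, ``invoke the continuity properties established in Section \ref{sect:continuity}'', is exactly the intended proof. The self-contained estimate you sketch, however, has a genuine gap.

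The problem is the summability step. Your seminorms $\nu_m(S):=\sum_{g\in\Gamma}(1+|g|)^m\|S(g)\|_1$ are weighted $\ell^1$-norms of the coefficient function $g\mapsto S(g)$. For a non-elementary hyperbolic group the balls grow exponentially, and membership in $\B^\infty$ (hence in $\J=\Psi^{-\infty,\epsilon}(M,E\otimes\B^\infty)$) only yields rapid decay in the weighted $\ell^2$ sense: this is precisely Lemma \ref{lemma:lemma2J}, $\sum_g\|R(g)\|_1^2(1+|g|)^{2k}<\infty$, reflecting the fact that the defining seminorms of $\B^\infty$ are operator norms of $\delta^k(T)$, i.e. $\ell^2$-type bounds as in \eqref{norms-nu-k}. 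Weighted $\ell^2$ rapid decay does not imply weighted $\ell^1$ decay when $\sum_g(1+|g|)^{-2j}=\infty$ for every $j$, which is the case in exponential growth; so your $\ell^1$-norms need not be finite on $\J$ and are not among its defining seminorms (the ``finite by the remark preceding the definition'' assertion conflates trace-classness of each individual $S(g)$ with summability of the trace norms over $\Gamma$). Consequently the ``standard counting/convolution argument'' dominating $\sum_{g_0\cdots g_k=1}\prod_i\|S_i(g_i)\|_1$ by a product of your $\nu_m$'s is not available: the constrained sum has to be controlled by products of weighted $\ell^2$-norms of the functions $f_i(g)=\|S_i(g)\|_1$, and that is exactly where the Haagerup/(RD) inequality enters — the content of Lemma \ref{lemma:lemma3} (Connes--Moscovici, Jolissaint). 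Boundedness of $c$ does not make this step elementary; the difficulty is not the growth of $c$ but the summability of a product of merely $\ell^2$-decaying coefficient functions over the set $g_0\cdots g_k=1$. A second, smaller correction: bounded representatives exist only in degrees $\geq 2$ (Gromov/Mineyev); a nonzero class in $H^1(\Gamma;\CC)$ has no bounded representative, so the input should be stated, as in the paper, as the existence of a \emph{polynomially bounded} representative in every degree, which is all that Lemma \ref{lemma:lemma3} requires. With these two repairs your argument collapses onto the proof of \eqref{extend-tau} given inside Proposition \ref{prop:extension}.
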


 \begin{proof}
 This will be proved later, see Proposition \ref{prop:extension} and its proof.
 \end{proof}

 Recall the pairing between $K$-groups and cyclic cohomology
groups and more
particularly between the $K_0$-group and the cyclic cohomology group of even degree. See the definition in \eqref{paring-abs}.

 \medskip
 \begin{definition}\label{def:higher-indeces}
 If $\tau_c$ satisfies the extendability assumption then we define the higher index
 associated to $c$
 as
 \begin{equation}\label{eq:higher-indeces}
 \Ind_{(c,\Gamma)} (\D):= \langle \Ind_\infty (\D),\tau_c \rangle
 \end{equation}
  \end{definition}


\smallskip
\noindent
We can now state the following:

\bigskip
\noindent
{\it The main goal of this paper is to prove a  Atiyah-Patodi-Singer formula
for $ \Ind_{(c,\Gamma)} (\D)$.}

\bigskip
\noindent
To this end we recall one of the main steps in the proof of the higher index theorem
of Connes-Moscovici. Let $N$ be a closed compact manifold and $\widetilde{N}\xrightarrow{\pi} N$ a Galois $\Gamma$-covering.
Consider the idempotent  $V_{\D^\otimes}$ and the index class
$\Ind_{\infty} (\D)=[V_{\D^\otimes}]-[e_1]\in K_0 (\Psi^{-\infty}(N,E\otimes (\B^\infty\otimes\CC^r))$. For $\tau_c$ extendable
and of degree $k$, $k=2p$, we have:  \begin{equation*}\Ind_{(c,\Gamma)} (\D)= \const_k\,\tau_c (V_{u\D^\otimes},\dots,V_{u\D^\otimes}), \end{equation*}
where
\begin{equation}\label{const_k}
\const_k=(-1)^p \frac{(2p)!}{p!},\quad k=2p
\end{equation}
and  $u>0$.
The following Proposition is  crucial and employs Getzler-rescaling
 in an essential way.\\Recall the data needed in order to construct the map
 \eqref{isometric-u}:
\begin{itemize}
\item A good open cover $\mathcal{U}=\{U_1,\dots,U_r\}$.
\item Continuous sections $s_i \colon U_i \to \widetilde{N}$ of the projection $\widetilde{N} \to N$.
\item A partition of unity $\chi_i$, $\supp \chi_i \subset U_i$, $\sqrt{\chi_i}$ smooth.
\end{itemize}
Given a $\Gamma$-cocycle $c$ of degree $k$ we can use this data in order to  construct a closed differential form $\omega_c$ as follows.
For every $i$, $j$ let $g_{ij}\in \Gamma$ be the unique element such that $g_{ij} s_j(x) =s_i(x)$ for every $x \in U_i\cap U_j$.
Then set
\begin{equation*}
\omega_c =  \sum \limits_{i_0,i_1, \ldots, i_k} c(g_{i_1i_2}, \ldots, g_{i_ki_0})\chi_{i_0} d\chi_{i_1} \ldots d \chi_{i_k} = \sum \limits_{i_0,i_1, \ldots, i_k} c(g_{i_0i_1}, \ldots, g_{i_{k-1}i_k})  \chi_{i_0} d\chi_{i_1} \ldots d \chi_{i_k}
\end{equation*}
The form $\omega_c$ defined by the above equation is closed and
$[\omega_c]=\nu^*  [c]$  where $\nu \colon N\to B\Gamma$
is the classifying map. Here we use the isomorphism $H^\bullet (B\Gamma,\CC)\simeq
H^\bullet(\Gamma,\CC)$.
We can give another description of the form $\omega_c$ as in \cite{LottI}. The sections $s_i$ induce diffeomorphisms $s_i \colon U_i \to \widetilde{U}_i =s_i(U_i) \subset \widetilde{N}$. One then constructs the functions $\tilde{\chi_i} \in C^\infty_0(\widetilde{U}_i)$ by
$\tilde{\chi_i} = (s_i^{-1})^* \chi_i$. Set $h = \sum_i \tilde{\chi_i} \in C^\infty_0(\widetilde{N})$. Then the function $h$ has the property that
\[
\sum_{g \in \Gamma} g\cdot h =1,
\]
where $g \cdot f(x) = f(g^{-1}x)$. Let
$\widetilde{\omega}_c\in \Omega^*(\widetilde{N})$ be the differential form given by
\[\widetilde{\omega}_c=\sum_{g_i\in\Gamma} d({g_1}\cdot h) \cdots d({g_k}\cdot h)  c(g_1, g_1^{-1}g_2, \dots,g_{k-1}^{-1}g_k)\,.\]
This form is $\Gamma$-equivariant and moreover  $\widetilde{\omega}_c=\pi^* (\omega_c)$.

\begin{proposition}\label{prop:short-time}
For any $u>0$ we have
\begin{equation}\label{short-time}
  \const_k \cdot \tau_{c} (V_{u\D^\otimes},\dots,V_{u\D^\otimes})=\int_{N} {\rm AS}\wedge \omega_{c}\,.
\end{equation}
with ${\rm AS}$ equal to the Atiyah-Singer integrand.
\end{proposition}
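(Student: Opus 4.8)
The plan is to reduce the identity to the closed-manifold higher index theorem of Connes--Moscovici, interpreting the left-hand side as the pairing of the cyclic cocycle $\tau_c$ with the index class $\Ind_\infty(\D) = [V_{\D^\otimes}]-[e_1]$, and then to identify the resulting local expression via heat-kernel / Getzler-rescaling asymptotics. First I would recall that, by \eqref{theta-of-w} and the discussion preceding \eqref{short-time}, one has $\Ind_{(c,\Gamma)}(\D) = \langle \Ind_\infty(\D), \tau_c\rangle = \const_k\, \tau_c(V_{u\D^\otimes},\dots,V_{u\D^\otimes})$ for every $u>0$, the $u$-independence being a homotopy invariance statement: the family $u\mapsto V_{u\D^\otimes}$ is a smooth path of idempotents in $M_{r'\times r'}(\Psi^{-\infty}(N,E\otimes(\B^\infty\otimes\CC^r))^+)$, and $\tau_c$ is a cyclic cocycle, so the pairing is constant along the path by the transgression formula $\Ch(p_1)-\Ch(p_0)=(b+B)\Tch(p_t)$ recalled in the preliminaries. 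Hence it suffices to compute the $u\to 0$ limit.

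Next I would rewrite $\tau_c(V_{u\D^\otimes},\dots,V_{u\D^\otimes})$ in terms of the heat-operator entries of $V_{u\D^\otimes}$ and the differential form $\omega_c$. The key algebraic observation is the same one used in \cite{CM} and \cite{LottI}: the cocycle $c$, evaluated on the transition elements $g_{ij}$ via the partition of unity $\chi_i$, reassembles the sum $\sum_{g_0\cdots g_k=1}\Tr(\cdots)c(g_1,\dots,g_k)$ into a local integral over $N$ against $\omega_c = \sum c(g_{i_0i_1},\dots,g_{i_{k-1}i_k})\chi_{i_0}d\chi_{i_1}\cdots d\chi_{i_k}$; concretely one uses the antisymmetry and normalization properties from Lemma \ref{lemma:group-cohom} together with $\sum_{g\in\Gamma}g\cdot h=1$ to turn the group-cocycle weighting into the de Rham pairing with $\widetilde\omega_c=\pi^*\omega_c$. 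This converts the expression into an integral over $N$ of a sum of fiberwise supertraces of compositions of heat kernels $e^{-u(\D^\otimes)^2}$ (and the off-diagonal Getzler-type terms in $V$) wedged with $\omega_c$.

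Then I would invoke Getzler rescaling: as $u\to 0$, the rescaled supertrace of the relevant combination of heat kernels of $(u\D^\otimes)$ converges, uniformly on $N$, to the Atiyah--Singer integrand $\mathrm{AS} = \widehat{A}(N)\wedge\ch(E/S)$ (the twisting by the trivial bundle $\B^\infty\otimes\CC^r$ contributing nothing beyond a rank factor that is absorbed into $\const_k$ and the combinatorics of $V$). The standard point here, exactly as in \cite{CM} and \cite{LottI}, is that the off-diagonal entries of $V_{u\D^\otimes}$ carry extra powers of $\D^\otimes$ that, under rescaling, precisely supply the $k$ differentials needed to pair with the degree-$k$ form $\omega_c$, and the combinatorial constant $\const_k = (-1)^p(2p)!/p!$ is exactly what emerges from the $(2n)!/n!$ coefficients in the Chern character formula \eqref{cyclicchern} after collecting terms. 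Taking $u\to 0$ and using $u$-independence of the left side yields \eqref{short-time}.

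\textbf{Main obstacle.} The technically delicate point is not the rescaling limit itself --- which is classical --- but controlling the interchange of the infinite sum over $\Gamma$ (implicit in $\tau_c$) with the $u\to 0$ heat-kernel asymptotics, i.e. justifying that the off-diagonal, finite-propagation-speed decay of the heat kernel of $\D^\otimes$ on the covering $\widetilde N$ makes the sum $\sum_{g_0\cdots g_k=1}$ effectively finite (localized near the diagonal) uniformly in small $u$, so that the Getzler estimate can be applied term by term and the limit passes inside. This is where one genuinely needs the good geometry of the covering and the polynomial-growth control on $c$; I would handle it by the usual off-diagonal Gaussian bounds on $e^{-u(\D^\otimes)^2}$ together with the compact support of the $\chi_i$, reducing, for each fixed small $u$, to a finite sub-sum whose limit is governed by Getzler rescaling, exactly as in \cite[Section 5]{CM}.
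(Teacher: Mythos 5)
Your proposal is correct and follows essentially the same route as the paper: there, too, the left-hand side is read as the $u$-independent pairing of $\tau_c$ with the index class, the sum over $\Gamma$ is localized via the partition of unity into a cochain on $\Psi^{-\infty}(N)$ (with $O(u^\infty)$ error controlled by off-diagonal heat-kernel decay), and the $u\to 0$ limit is computed by Getzler rescaling as in Connes--Moscovici and Lott. The only refinement the paper adds, which your sketch glosses over, is that since $V_{u\D^\otimes}$ is not self-adjoint one first writes $\tau_c(V_{u\D^\otimes},\dots,V_{u\D^\otimes})=\tfrac12\,\tau_c(V_{u\D^\otimes}\oplus V_{u\D^\otimes}^*,\dots)$ so that the Moscovici--Wu extension of the rescaling computation applies to the symmetrized idempotent; this is a technical point that does not alter your overall argument.
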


This theorem has been proved by Connes-Moscovici in \cite{CM}. In that paper they used the Getzler's calculus  to compute short-time asymptotics
of Wassermann projection. In \cite{MWU} Moscovici and Wu show that the same method applies to a wider class of idempotents,
in particular to the symmetrized idempotent
\[\widetilde{V}_{u\D^\otimes}:=V_{u\D^\otimes} \oplus V_{u\D^\otimes}^*.
\]
 This is the result that we will need for the calculation
of the short-time limit. We note that a different method, also based on Getzler's rescaling, but using instead the superconnection techniques, has been used
by J. Lott in \cite{LottI}.

We recall now some of the steps in Connes-Moscovici's proof this theorem, using slight modification from \cite{MWU}, referring the reader to
\cite{CM}, \cite{MWU}, \cite{LottI} for  details.
We start by noticing that as $\tau_c$ extends to pair with the $K$-theory of $C^*_r\Gamma$, we have
\[
\tau_{c} (V_{u\D^\otimes},\dots,V_{u\D^\otimes}) = \frac{1}{2} \tau_{c} (\widetilde{V}_{u\D^\otimes},\dots,\widetilde{V}_{u\D^\otimes})
\]

Consider the cochain $\tilde{\tau}_{c}$ on the smoothing operators $\Psi^{-\infty}(N)$ given by
\begin{equation*}
\tilde{\tau}_{c}(A_0, A_1, \ldots, A_k)= \int_{N^{k+1}}\tr A_0(x_0, x_1) \ldots A_k(x_k, x_0) \phi_c(x_0,\ldots, x_k)dx_0\ldots dx_k
\end{equation*}
where
\begin{equation*}
\phi_c(x_0,\ldots, x_k) =  \sum \limits_{i_0,i_1, \ldots, i_k} c(g_{i_1i_2}, \ldots, g_{i_ki_0})\chi_{i_0}(x_0) \chi_{i_1}(x_1) \ldots  \chi_{i_k}(x_k).
\end{equation*}
For $k >0$ $\tilde{\tau}_{c}$ is extended to the unitalization of $\Psi^{-\infty}(N)$ by $\tilde{\tau}_{c}(A_0, A_1, \ldots, A_k)=0$ if one of $A_i=1$.
To prove the proposition  one first establishes equality
\begin{equation*}
\tau_{c} (\widetilde{V}_{u\D^\otimes},\dots,\widetilde{V}_{u\D^\otimes})=\tilde{\tau}_{c}(\widetilde{V}_{uD}, \widetilde{V}_{uD}, \ldots, \widetilde{V}_{uD}) +O(u^\infty) \text{ as } u \to 0
\end{equation*}
where $D$ is the Dirac operator on $N$.
(Notice that an inspection of the arguments in \cite{CM} shows that the trace identity is not used in this
proof; this will be important when we shall want to extend this result to $b$-manifolds.)
In the next step one uses Getzler's calculus to show that
\[\lim_{u\to 0} \const_k \cdot \tilde{\tau}_{c}(\widetilde{V}_{uD}, \widetilde{V}_{uD}, \ldots, \widetilde{V}_{uD})  =2\int_{N} {\rm AS}\wedge \omega_{c}.\]
In fact, Connes and Moscovici (for the case of Wassermann projection) and Moscovici and Wu obtain a local result, computing the limit of the corresponding trace density. Later in the paper we shall deal with the case of manifolds with cylindrical ends.

\medskip

We end this Section by discussing the compatibility of our definition with
the one appearing in the work of the third author and Leichtnam.
For the latter we consider the Mishchenko-Fomenko index class
$\Ind_{{\rm MF},\infty} (\D) \in K_* (\B^\infty)$. Recall that this is obtained
through a $\B^\infty$-MF decomposition theorem; thus there exist
 finitely generated projective $\B^\infty$-submodules
 $\L_{\infty}\subset H^\infty_b (M,
 E^+\otimes\V^\infty)$ and $\N_{\infty}\subset H^\infty_b (M,
 E^-\otimes\V^\infty)$, with $H^\infty_b:=\cap_{k\in\NN} H^k_b$,
 and decompositions
 $$\L_{\infty}\oplus \L_{\infty}^\perp= H^\infty_b (M,
 E^+\otimes\V^\infty)\quad\quad \N_{\infty}\oplus \D^+ (\L_{\infty}^\perp)= H^\infty_b (M,
 E^-\otimes\V^\infty)$$
 so that $\D^+$ is block diagonal and invertible when restricted to  $\L_{\infty}^\perp$.
 We refer the reader to  \cite[Theorem 12.7]{LPMEMOIRS} and \cite[Appendix]{LPBSMF} for the precise statement.
The Mishchenko-Fomenko $\B^\infty$-index is, by definition,
$$\Ind_{{\rm MF},\infty}(\D):=[\L_\infty] - [\N_\infty]\in K_* (\B^\infty).$$
We can consider the Karoubi-Chern character of this class, with values in the noncommutative de Rham homology of $\B^\infty$:
$$\Ch_{K} (\Ind_{{\rm MF},\infty}(\D))\in\overline{H}_\bullet (\B^\infty)\,.$$
Fix $c\in Z^{2p}(\Gamma;\CC)$,
a normalized group cocycle with associated reduced cyclic cocycle $t_c\in \overline{C}_{\lambda}^{2p}(\CC\Gamma) \subset CC^{2p}(\CC\Gamma)$.
Assume  now that $\Gamma$ satisfies the (RD) condition and
that $c$ is of polynomial growth; then $t_c$ extends from $\CC\Gamma$
to $\B^\infty$ and since $\overline{H}_\bullet (\B^\infty)$ can be paired with
(reduced) cyclic cohomology, see Subsection \ref{subsect:karoubi},
we obtain a number
$\langle \Ch_K (\Ind_{{\rm MF},\infty}(\D)),t_c \rangle_K$.

\begin{proposition}\label{prop:compatibility}
Under the above assumptions on $\Gamma$ and $c$, and with the notation introduced so far, the following equality holds:
\begin{equation}\label{compatibility}
\Ind_{(c,\Gamma)}(\D)=\langle \Ch_K (\Ind_{{\rm MF},\infty}(\D)),t_c \rangle_K\,.
\end{equation}
\end{proposition}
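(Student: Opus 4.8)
The plan is to reduce the equality \eqref{compatibility} to an identity at the level of cyclic homology, where both sides can be compared directly. First I would observe that the left-hand side is, by Definition \ref{def:higher-indeces}, the pairing $\langle \Ind_\infty(\D),\tau_c\rangle$, and by the discussion following that definition (and the results of Connes--Moscovici and Moscovici--Wu recalled in Proposition \ref{prop:short-time}) it equals $\const_k\,\tau_c(V_{u\D^\otimes},\dots,V_{u\D^\otimes})$ for any $u>0$; more intrinsically it is the pairing of $\ch(\Ind_\infty(\D))\in HC_0(\J)$ with the class of $\tau_c$ in $HC^{2p}(\J)$. The right-hand side is a Karoubi-type pairing $\langle \Ch_K(\Ind_{{\rm MF},\infty}(\D)),t_c\rangle_K$ between the noncommutative de Rham homology of $\B^\infty$ and reduced cyclic cohomology of $\B^\infty$. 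So the two sides live, a priori, in pairings over two different algebras: $\J = \Psi^{-\infty,\epsilon}(M,E\otimes(\B^\infty\otimes\CC^k))$ on the left (in the closed case $\Psi^{-\infty}(N,E\otimes(\B^\infty\otimes\CC^r))$) and $\B^\infty$ itself on the right. The key is that these are Morita-equivalent: $\Psi^{-\infty}(N,E\otimes(\B^\infty\otimes\CC^r))$ is a "smooth" stabilization of $\B^\infty$, and the generalized trace map $\Tr$ on the $\Psi^{-\infty}$-factor induces an isomorphism on $K$-theory and a compatible map on cyclic (co)homology that intertwines $\tau_c$ with $t_c$ (up to the combinatorial constant relating the Karoubi pairing $\ell!\,\tau(a_0,\dots,a_\ell)$ to the cyclic pairing, which is exactly $\const_k$ up to sign, cf. \eqref{pairing-compatibily}).

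The concrete steps I would carry out are as follows. Step one: identify the two index classes. Both $\Ind_{{\rm MF},\infty}(\D)=[\L_\infty]-[\N_\infty]\in K_0(\B^\infty)$ and $\Ind_\infty(\D)=[V_{\D^\otimes}]-[e_1]\in K_0(\J)$ are, under the Morita isomorphism $K_0(\B^\infty)\xrightarrow{\sim}K_0(\J)$, the Mishchenko--Fomenko index class; this is the content of the $\B^\infty$-MF decomposition theorem (\cite[Theorem 12.7]{LPMEMOIRS}, \cite[Appendix]{LPBSMF}) together with the standard fact that the Connes--Skandalis projector and the graph/Wassermann projector of $\D^\otimes$ all represent that same class (equations \eqref{classeq}, \eqref{theta-of-w}, \eqref{theta-of-bw}). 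So after applying the Morita map, $\Ch_K(\Ind_{{\rm MF},\infty}(\D))$ and $\ch(\Ind_\infty(\D))$ correspond under the comparison map $\overline{H}_\bullet(\B^\infty)\to HC_\bullet(\B^\infty)$ and the Morita isomorphism in cyclic homology. Step two: identify the two cocycles. Here one uses that $t_c$ is precisely the reduced cyclic cocycle on $\CC\Gamma\subset\B^\infty$ whose "stabilization" over $\Psi^{-\infty}(N,E)$ is $\tau_c$; this is visible from the explicit formulas --- $\tau_c(S_0,\dots,S_k)=\sum_{g_0\cdots g_k=1}\Tr(S_0(g_0)\cdots S_k(g_k))c(g_1,\dots,g_k)$ is exactly the cup product of $t_c$ with the canonical trace on the smoothing-operator factor --- so under the Morita map in cyclic cohomology $[\tau_c]$ and $[t_c]$ agree (and the class of $\tau_c$ lands in the image of reduced cyclic cohomology, which is why the Karoubi pairing $\langle\cdot,\cdot\rangle_K$ applies). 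Step three: match the pairings. The Morita isomorphism is compatible with the cyclic pairing, so $\langle\ch(\Ind_\infty(\D)),[\tau_c]\rangle_{HC}=\langle\ch(\Ind_{{\rm MF},\infty}(\D)),[t_c]\rangle_{HC}$, and by the compatibility \eqref{pairing-compatibily} between the $HC$-pairing and the Karoubi pairing $\langle\cdot,\cdot\rangle_K$ the right-hand side equals $\langle\Ch_K(\Ind_{{\rm MF},\infty}(\D)),t_c\rangle_K$ (the factorials in $\Ch_K$ versus $\ch$ and in the two pairings conspire to cancel). Assembling steps one through three gives \eqref{compatibility}.

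The main obstacle I anticipate is bookkeeping at the boundary: the algebra $\J$ here is the residual-operator ideal in the $b$-MF calculus, $\Psi^{-\infty,\epsilon}(M,E\otimes(\B^\infty\otimes\CC^k))$, not merely smoothing operators on a closed manifold, and $\Ind_\infty(\D)$ sits in $K_0(\J)$ with $\J$ playing the role of a "compacts with $\epsilon$-bounds" ideal. I need the Morita/stabilization argument to be valid with these $\epsilon$-bounds in place: that the trace $\Tr$ on the $\Psi^{-\infty,\epsilon}(M,E)$-factor is still well defined (it is --- elements of $\Psi^{-\infty,\epsilon}(M,E)$ are trace class on $L^2$, as recalled just before the definition of $\tau_c$), that it still induces the expected maps on $K$-theory and cyclic (co)homology, and that $\B^\infty$ being closed under holomorphic functional calculus in $C^*_r\Gamma$ lets me pass freely between the $\B^\infty$-level and $C^*_r\Gamma$-level classes (used repeatedly, e.g. in \eqref{classeq} and Proposition \ref{prop:relative-indeces}). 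A second, more mechanical subtlety is the precise value of the constant relating Lott's/Leichtnam--Piazza's normalization of $t_c$ and $\Ch_K$ to ours of $\tau_c$ and $\ch$; I would pin this down once and for all by testing on a rank-one idempotent, so that the $\const_k=(-1)^p(2p)!/p!$ appearing in \eqref{short-time} is recovered on both sides and the identity is an honest equality of numbers, not just up to scalar.
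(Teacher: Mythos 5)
Your argument is correct in substance and rests on the same two pillars as the paper's proof --- the identification of $\Ind_\infty(\D)$ with a stabilization of the Mishchenko--Fomenko class $[\L_\infty]-[\N_\infty]$, and the observation that $\tau_c$ is $t_c$ cupped with the operator trace, so that the two pairings match via \eqref{pairing-compatibily} --- but you package these as an abstract Morita/stabilization argument, whereas the paper works with one explicit representative and never invokes Morita invariance of cyclic (co)homology for these Fr\'echet algebras. Concretely, the paper takes as parametrix for $\D^+$ the Green operator of the MF decomposition, so that the Connes--Skandalis projector becomes $\Pi_+\oplus(\Id-\Pi_-)$ with $\Pi_\pm$ the projections onto $\L_\infty$ and onto $\N_\infty$ along $\D^+(\L_\infty^\perp)$, which are residual by \cite[Theorem 12.7]{LPMEMOIRS}; this single choice simultaneously establishes your ``step one'' (which is not quite covered by \eqref{classeq} and \eqref{theta-of-bw} alone, since those only compare the various parametrix-built projectors with one another, not with the image of the MF class) and reduces the pairing $\langle\ch(\Ind_\infty(\D)),[\tau_c]\rangle_{HC}$ to a direct computation: unwinding $\Ch$ of $\theta(\Pi_+)\oplus(\Id-\theta(\Pi_-))$ against $\tau_c=t_c\,\sharp\,\Tr$ gives exactly $\langle\Ch_K(\L_\infty^\otimes),t_c\rangle_K-\langle\Ch_K(\N_\infty^\otimes),t_c\rangle_K$, with the Karoubi Chern characters computed with the Grassmann connections $\theta(\Pi_\pm)\,d_\Gamma\,\theta(\Pi_\pm)$, and connection-independence of $\Ch_K$ then yields the right-hand side of \eqref{compatibility}. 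What your route buys is conceptual clarity and generality --- the proposition becomes an instance of compatibility of the generalized trace with Chern characters and pairings --- at the price of actually verifying that this machinery (the maps induced on $K_0$ and $HC^\bullet$ and the intertwining of pairings) is available for the residual $b$-calculus with $\epsilon$-bounds, the point you flag yourself; the paper's route buys economy, since for the one idempotent coming from the MF decomposition everything is checked by hand, with the normalization constants absorbed exactly by \eqref{pairing-compatibily} rather than by a separate rank-one test.
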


\begin{proof}
Let $\Pi_+$ be the orthogonal projection onto $\L_\infty$ and let $\Pi_-$
be the projection onto $\N_\infty$ along $\D( \L_\infty^\perp)$. It is proved
in \cite[Theorem 12.7]{LPMEMOIRS} that these elements are residual.
Thus $$P:=
\left(\begin{array}{cc} \Pi_+ & 0\\ 0 &   \Id-\Pi_-
 \end{array} \right)\in \mathfrak{J}^+\,,$$
 with $\mathfrak{J}^+$ denoting the unitalization of $\mathfrak{J}$ and
 $[P]-[e_1]\in K_0 (\mathfrak{J})$. By choosing as
a parametrix of $\D^+$ the Green operator defined by the Mishchenko-Fomenko
decomposition, i.e. the operator equal to 0 on $\N_\infty$ and equal to  $(\D^+|_{\L_\infty^\perp})^{-1}$ on $\D^+(\L_\infty^\perp)$, we easily see that
$$\operatorname{CS}_\infty (\D)= [P]-[e_1] \in K_0 (\mathfrak{J})\,.$$
Thus
$$\Ind_\infty (\D)= \left[ \left(\begin{array}{cc} \theta(\Pi_+) & 0\\ 0 &   \Id-\theta(\Pi_-)
 \end{array} \right)\right] - \left[ \left(\begin{array}{cc} 0& 0\\ 0 &   \Id
 \end{array} \right)\right]$$
 which implies
 $$\Ind_{(c,\Gamma)} (\D)= \langle \left[ \left(\begin{array}{cc} \theta(\Pi_+) & 0\\ 0 &   \Id-\theta(\Pi_-)
 \end{array} \right)\right] - \left[ \left(\begin{array}{cc} 0& 0\\ 0 &   \Id
 \end{array} \right)\right],\tau_c \rangle\,.$$
 Recall the isometric embedding $U$, see \eqref{isometric-u}, that we rewrite
 in the $b$-context as
$H^\infty_b (M,E\otimes \V^\infty)\xrightarrow{U}
H^\infty_b (M, E\otimes( \B^\infty\otimes \CC^k))
$;
 this identifies $\L_\infty$ and $\N_\infty$ with two finitely generated projective
$\Bi$-modules $\L_\infty^\otimes $ and $\N_\infty^\otimes $ in  $H^\infty (M, E^\pm\otimes( \B^\infty\otimes \CC^k))$ and $\theta (\Pi_\pm)$ are projections
onto  $\L_\infty^\otimes $ and $\N_\infty^\otimes $.
There are natural connections  on these finitely generated projective modules,
obtained by compressing with $\theta (\Pi_\pm)$ the trivial connection $d_\Gamma$
induced by the
differential  in the $\Gamma$-direction,
$d: \B^\infty\to \Omega_1 (\B^\infty)$.
Thus we can compute the right hand side of \eqref{compatibility} by using
$\L_\infty^\otimes $ and $\N_\infty^\otimes $ endowed with the connections
$\theta (\Pi_\pm) \,d_\Gamma \,\theta (\Pi_\pm)$.
Recall now that the definition of the pairing between $K_0 (\J)$ and $HC^{2\star}
(\J)$ is through the Connes-Chern character from $K$-theory to cyclic homology,
 see \eqref{pairing-cyclic-coh}
 \begin{equation}\label{explicit}
 \langle \left[ \left(\begin{array}{cc} \theta(\Pi_+) & 0\\ 0 &   \Id-\theta(\Pi_-)
 \end{array} \right)\right] - \left[ e_1\right],\tau_c \rangle :=
 \langle \Ch \left[ \left(\begin{array}{cc} \theta(\Pi_+) & 0\\ 0 &   \Id-\theta(\Pi_-)
 \end{array} \right)\right] - \left[ e_1\right],\tau_c \rangle_{HC}
 \end{equation}
 where  $e_1=\left(\begin{array}{cc} 0& 0\\ 0 &   \Id
 \end{array} \right)$ and where we recall that
 given an idempotent $p$ in  $\J$ one defines
 $$\Ch (p)=p+\sum_{k\geq 1} (-1)^k \frac{(2k)!}{k !} (p-\ha)\otimes p^{\otimes 2k}$$
 and similarly for an idempotent $p$ in $M_{r\times r} (\J)$.
 What appears above, in \eqref{explicit}, is the left hand side of \eqref{compatibility};
 unwinding this expression one can show easily that the number we get
 is precisely equal to
 $\langle \Ch (\L_\infty^\otimes),t_c \rangle_K -
\langle \Ch (\N_\infty^\otimes),t_c \rangle_K$
with the first Chern character computed with the connection $\theta (\Pi_+) \,d_\Gamma \,\theta (\Pi_+))$ and the second one with $\theta (\Pi_-) \,d_\Gamma \,\theta (\Pi_-)$. Since, as just explained,
this is in turn equal to the right hand side of \eqref{compatibility}, we conclude that
the proof of the proposition is  complete.

\end{proof}



\section{The relative cyclic cocycle $(\tau^r_c,\sigma_c)$ associated
to a group cocyle}\label{sect:relative-cocycles}
  Consider now the algebra $\A$ with $r=1$; an element $A\in \A$
  is a function on $\Gamma$ with values in
$\Psi^{-\infty,\epsilon}_b (M,E) + \Psi^{-\infty,\epsilon} (M,E) $, denoted
$\Gamma\ni g \to A(g)\in \Psi^{-\infty,\epsilon}_b (M,E) + \Psi^{-\infty,\epsilon} (M,E)$.
We first work on the dense subalgebra $\A_f$ of $\A$ given by the elements of compact
support in $\Gamma$, i.e.
$$\A_f:=(\Psi^{-\infty,\epsilon}_b (M,E) + \Psi^{-\infty,\epsilon} (M,E))\otimes \CC\Gamma.$$

\begin{definition}\label{def:regularized-tau}
For  $A_i\in \A_f$ we set
\begin{equation}\label{regularized-tau}
\tau^r_{c} (A_0 + \omega \cdot 1 , A_1, \dots A_k)
=\sum_{g_0 g_1 \cdots g_k=1}\, {}^b\Tr (A_0 (g_0) A_1 (g_1) \cdots  A_k (g_k))
c (g_1,g_2,\dots,g_k).
\end{equation}
\end{definition}

Recall 
the definition of a double complex $(C^*(\A), B+b)$
 for an arbitrary algebra $\A$ over $\CC$;
as already explained, the cochain complex $(CC^n (\A), B+b),$  consists of
multilinear mappings $\tau: \A^+\otimes \A^{\otimes n} \to \CC$ with the Hochschild coboundary map
$
b: C^n(\A) \to C^{n+1}(\A)
$
and
$
B:C^{n+1}(\A) \to C^n(\A)
$

\begin{lemma}
In the double complex $(CC^*(\A_f), b+B)$ one has
$B \tau^r_{c} =0$.
\end{lemma}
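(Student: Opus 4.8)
The statement $B\tau^r_c = 0$ is the regularized-trace analogue of the well-known fact that the Connes cocycle $\tau_c$ built from a normalized group cocycle is already reduced (lies in the image of the $\lambda$-complex), and so is annihilated by $B$. The plan is to reduce to the algebraic structure of $\tau^r_c$ and exploit the normalization of $c$ exactly as in the case of the ordinary trace; the $b$-trace enters only as a linear functional and none of its special (non-tracial) features play any role here.

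First I would recall that, up to the normalization constant, $(B\phi)(A_0,\dots,A_{k-1}) = \sum_{i=0}^{k-1} (-1)^{i(k-1)} \phi(1, A_i, A_{i+1},\dots,A_{i-1})$ on normalized cochains. Since $\tau^r_c(A_0+\omega\cdot 1, A_1,\dots,A_k)$ is defined with $\omega$ paired against the trivial part, $\tau^r_c$ is by construction \emph{already normalized in the first slot}, i.e. $\tau^r_c(1, A_1,\dots,A_k)$ is the coefficient of $\omega$, hence evaluating the unital extension on $1$ in the zeroth slot gives a term whose only dependence on $c$ is through $c(g_1,\dots,g_k)$ with the constraint $g_1 g_2\cdots g_k = 1$ coming from $g_0 = 1$ and $g_0 g_1\cdots g_k = 1$. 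Then part (1) of Lemma \ref{lemma:group-cohom} applies: $c(g_1,\dots,g_k)=0$ whenever $g_1 g_2\cdots g_k = 1$. Therefore every summand appearing in each term of $B\tau^r_c$ has a vanishing $c$-factor, and $B\tau^r_c = 0$.

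Concretely, the steps I would carry out are: (i) write out $(B\tau^r_c)(A_0,\dots,A_{k-1})$ as the alternating sum of cyclic permutations of $\tau^r_c(1,A_{i},\dots,A_{i-1})$; (ii) expand each such term using Definition \ref{def:regularized-tau}, noting that inserting $1=1\cdot\delta_e$ in one slot forces the corresponding group element to be the identity $e$, so the summation constraint $g_0 g_1\cdots g_k = 1$ over the remaining elements becomes $g_{i_1}\cdots g_{i_{k}} = 1$ on a re-indexed $k$-tuple of elements whose product is $1$ — equivalently, after the cyclic relabeling and using Lemma \ref{lemma:group-cohom}(2)--(3) to move the inserted identity into the argument list of $c$, the $c$-coefficient is $c(\dots, 1, \dots)$ or $c$ evaluated on a tuple with product $1$; (iii) invoke Lemma \ref{lemma:group-cohom}(1) to conclude each such coefficient vanishes. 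Summing the (finitely many) terms then gives $B\tau^r_c = 0$.

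The only mild subtlety — and the single place requiring care rather than routine bookkeeping — is the index gymnastics in step (ii): one must check that after inserting $1$ in slot $i$ and cyclically relabeling, the group-cocycle factor really does end up either containing a $1$ as one of its arguments or being evaluated on a tuple whose product is the identity, so that the normalization of $c$ (Lemma \ref{lemma:group-cohom}(1)) can be applied. This is exactly the computation that shows the analogous cochain for the ordinary trace is $B$-closed (as cited from \cite{Co} for $\tau_c$), and it transfers verbatim since $B$ only sees the multilinear/combinatorial structure and treats ${}^b\Tr$ as an opaque functional; no trace property of ${}^b\Tr$ is used. I would therefore present it briefly, referencing \cite{Co} and Lemma \ref{lemma:group-cohom}, rather than reproducing the full permutation calculation.
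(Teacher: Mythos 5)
Your argument is correct and amounts to the same observation the paper treats as immediate (its proof is just ``This is obvious''): every term of $B\tau^r_c$ evaluates $\tau^r_c$ with the unit in the zeroth slot, and that evaluation vanishes. In fact your steps (ii)--(iii) can be shortcut: by Definition \ref{def:regularized-tau} the value of $\tau^r_c$ depends only on the $\A_f$-component of the zeroth entry, so $\tau^r_c(1,A_1,\dots,A_k)=0$ by fiat, without even invoking the normalization of $c$ --- though your route (the unit forces $g_0=1$, hence $g_1\cdots g_k=1$, and then Lemma \ref{lemma:group-cohom}(1) kills the coefficient) is equally valid and needs no appeal to parts (2)--(3) of that lemma.
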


\begin{proof}
This is obvious.
\end{proof}

Consider now
$\G_f$
which is nothing but the algebraic tensor product $G\otimes \CC\Gamma$,with
$G=\Psi^{-\infty,\epsilon}_{b,\RR^+} (\overline{N^+ (\pa M)},E) +
\Psi^{-\infty,\epsilon}_{\RR^+}  (\overline{N^+ (\pa M)},E)$.
Recall that there exists a (surjective) homomorphism
$I : \mathcal{A}_f \to \mathcal{G}_f $, the indicial operator.

\begin{lemma}\label{lemma:sigmac}
Let $\sigma_c$  be the cochain on $\mathcal{G}_f$ defined
by
\begin{align*}
&\sigma_c (B_0 +\omega\cdot 1,B_1, \dots, B_{k+1})\\
&:=(-1)^{k+1} \sum_{g_0\cdots g_{k+1}=1}
 \frac{i}{2\pi}\int d\lambda  \Tr \left( (\widehat{B}_0 (\lambda)(g_0)
\cdots \widehat{B}_k (\lambda)(g_k) \frac{d \widehat{B}_{k+1}(\lambda)(g_{k+1})}{d\lambda} \right)
c(g_1,g_2, \dots,g_{k})
\end{align*}
Then
$b \tau^r_{c} =I^* \sigma_c$. \\
We call $\sigma_c$ the {\bf eta cocycle} associated to $c$.
\end{lemma}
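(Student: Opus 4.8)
The plan is to compute the Hochschild coboundary $b\tau^r_c$ directly from the definition and see that the obstruction to it vanishing is exactly the $b$-trace defect, which by Melrose's $b$-trace formula is expressed as an integral over the boundary of the indicial families — and this is precisely $I^*\sigma_c$. Concretely, I would first recall the Hochschild coboundary formula: for $A_0,\dots,A_{k+1}\in\A_f$,
\[
b\tau^r_c(A_0,\dots,A_{k+1}) = \sum_{j=0}^{k}(-1)^j \tau^r_c(A_0,\dots,A_jA_{j+1},\dots,A_{k+1}) + (-1)^{k+1}\tau^r_c(A_{k+1}A_0,A_1,\dots,A_k).
\]
Substituting the definition \eqref{regularized-tau} and using the fact that $c$ is a group cocycle together with the normalization and antisymmetry properties in Lemma \ref{lemma:group-cohom}, all the terms combine so that the group-cohomological part telescopes (this is the standard computation showing $\tau_c$ is a cyclic cocycle on $\J_f$, carried out e.g. in \cite{Co}); if ${}^b\Tr$ were an honest trace, one would get $b\tau^r_c=0$. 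Since ${}^b\Tr$ is only a trace up to a correction, the residual terms that survive are governed by the commutator defect ${}^b\Tr([P,Q]) \neq 0$.

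The second and key step is to invoke Melrose's $b$-trace formula: for $b$-smoothing operators $P,Q$ (with $\epsilon$-bounds),
\[
{}^b\Tr([P,Q]) = \frac{i}{2\pi}\int_{\RR} \Tr\!\left(\frac{\partial I(P,\lambda)}{\partial\lambda}\, I(Q,\lambda)\right) d\lambda,
\]
where $I(\cdot,\lambda)$ denotes the indicial family (the image under the homomorphism $I:\A_f\to\G_f$, after Mellin/Fourier transform). I would apply this to the defect terms in $b\tau^r_c$; each defect contributes a boundary integral in $\lambda$ of a trace of a product of indicial families $\widehat{A_i}(\lambda)(g_i)$ times a derivative, weighted by the same group-cocycle coefficient $c(g_1,\dots,g_k)$ as before. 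Matching signs and the shift in degree (the eta cocycle $\sigma_c$ has degree $k+1$, reflecting that it lives on $\G_f$ one degree up), one reads off exactly the formula defining $\sigma_c$ in the statement, so that $b\tau^r_c = I^*\sigma_c$.

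The main obstacle is bookkeeping: tracking the precise signs and the reindexing when the $b$-trace defect formula is applied inside the alternating sum, and making sure the integration-by-parts ambiguity in $\int \Tr(I(P)\, \partial_\lambda I(Q)) d\lambda$ versus $-\int\Tr(\partial_\lambda I(P)\, I(Q))d\lambda$ is resolved consistently with the $(-1)^{k+1}$ prefactor in the definition of $\sigma_c$. One should also check that the integrals converge, which follows from the $\epsilon$-bounds built into $\A_f$, $\G_f$ and the fact that elements of $\Psi^{-\infty,\epsilon}$ are trace class, so that $\widehat{B}_i(\lambda)(g)$ decays rapidly in $\lambda$. The verification that $c$ being a group cocycle is exactly what makes the non-defect part cancel is routine once the computation for $\tau_c$ on $\J_f$ (referenced after the definition of $\tau_c$) is recalled; I would quote that computation rather than repeat it, and concentrate on isolating the defect contribution.
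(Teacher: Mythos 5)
Your proposal follows essentially the same route as the paper: expand the Hochschild coboundary of $\tau^r_c$, use the group-cocycle identity $\delta c=0$ to cancel all the ${}^b\Tr$-of-product terms except the commutator defect coming from the cyclic wrap-around term $\tau^r_c(A_{k+1}A_0,A_1,\dots,A_k)$, and then convert that defect into the $\lambda$-integral of indicial families via Melrose's $b$-trace formula, which is exactly $I^*\sigma_c$. The only step the paper spells out that you leave implicit is the separate evaluation of $b\tau^r_c(1,A_1,\dots,A_{k+1})$ needed for the unitalization, handled there with part (3) of Lemma \ref{lemma:group-cohom}, but it is the same mechanism.
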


\begin{proof}
We observe first of all that
\begin{align*}
& \tau^r_c (A_0 A_1,A_2,\dots,A_{k+1})=\sum_{\gamma g_2\cdots g_{k+1}=1}
{}^b \Tr ((A_0 A_1) (\gamma) A_2 (g_2) \cdots A_{k+1} (g_{k+1})) c(g_2,g_3,
\dots,g_{k+1})\\
&=\sum_{\gamma g_2\cdots g_{k+1}=1;\gamma=g_0 g_1}
{}^b \Tr (A_0 (g_0) A_1 (g_1) A_2 (g_2) \cdots A_{k+1} (g_{k+1})) c(g_2,g_3,
\dots,g_{k+1})\\
&= \sum_{g_0 g_1 g_2\cdots g_{k+1}=1}
{}^b \Tr (A_0 (g_0) A_1 (g_1) A_2 (g_2) \cdots A_{k+1} (g_{k+1})) c(g_2,g_3,
\dots,g_{k+1})
\end{align*}
We also observe that
\begin{equation*}
 \tau^r_c (A_0,\dots,A_i A_{i+1}, \dots, A_{k+1})=\sum_{g_0\cdots g_{k+1}=1}
{}^b \Tr (A_0 (g_0)  \cdots A_{k+1} (g_{k+1})) c(g_1,
\dots,g_i g_{i+1}, \dots, g_{k+1})\end{equation*}
and that
\begin{align*}
& \tau^r_c (A_{k+1} A_0, A_1,\dots,A_k)=\sum_{ g_0\cdots g_{k+1}=1}
{}^b \Tr (A_{k+1} (g_{k+1}) A_0 (g_0) A_1 (g_1) \cdots A_{k} (g_{k})) c(g_1,g_2,
\dots,g_{k})\\
&=\sum_{g_0\cdots g_{k+1}=1}
{}^b \Tr (A_0 (g_0) A_1 (g_1)  \cdots A_{k+1} (g_{k+1})) c(g_1,g_2,
\dots,g_{k}) + \\
& \sum_{g_0\cdots g_{k+1}=1} {}^b \Tr [ A_{k+1} (g_{k+1}), A_0 (g_0) A_1 (g_1)  \cdots A_k (g_k)]
 c(g_1,g_2,
\dots,g_{k})
\end{align*}
Adding up and using the fact that $c$ is a cocycle we see that
$$b \tau^r_c (A_0, \dots, A_{k+1}) = (-1)^{k+1}
\sum_{g_0\cdots g_{k+1}=1} {}^b \Tr [ A_{k+1} (g_{k+1}), A_0 (g_0) A_1 (g_1)  \cdots A_k (g_k)] c(g_1,g_2, \dots,g_{k}) $$
Thus, using the $b$-trace identity of Melrose we find:
$$b \tau^r_c (A_0, \dots, A_{k+1}) =
(-1)^{k+1} \sum_{g_0\cdots g_{k+1}=1}
\frac{i}{2\pi}\int d\lambda \Tr \left(  I(A_0,\lambda)(g_0)
\cdots I(A_k,\lambda)(g_k) \frac{d I(A_{k+1},\lambda)(g_{k+1})}{d\lambda} \right)
c(g_1,g_2, \dots,g_{k})
$$
We end the proof by computing $b \tau^r_c (1,A_1, \dots, A_{k+1})$.

We have:
\begin{align*}
& b \tau^r_c (1,A_1, \dots, A_{k+1})= \tau^r_c (A_1, \dots,A_{k+1}) + (-1)^{k+1}
\tau^r_c (A_{k+1},A_1, \dots,A_k)\\
&= \sum_{g_1\cdots g_{k+1}=1}  {}^b \Tr (A_1 (g_1) \cdots A_{k+1} (g_{k+1}))
c(g_2, \dots,g_{k+1}) +\\& (-1)^{k+1} \sum_{g_1\cdots g_{k+1}=1}
{}^b \Tr (A_{k+1} (g_{k+1}) A_1 (g_1) \cdots A_{k} (g_{k}))
c(g_1, \dots,g_{k})\\
&= (-1)^{k+1} \sum_{g_1\cdots g_{k+1}=1}
{}^b \Tr [A_{k+1} (g_{k+1}), A_1 (g_1) \cdots A_{k} (g_{k})]
c(g_1, \dots,g_{k})\\
&= (-1)^{k+1} \sum_{g_1\cdots g_{k+1}=1} \frac{i}{2\pi}\int d\lambda \Tr \left( I(A_1,\lambda)(g_1)
\cdots I(A_k,\lambda)(g_k) \frac{d I(A_{k+1},\lambda)(g_{k+1})}{d\lambda} \right)
c(g_1,g_2, \dots,g_{k})
\end{align*}
where we have used Lemma \ref{lemma:group-cohom}, part \ref{c} in the penultimate step. The Lemma is
proved.

\end{proof}

\begin{lemma}
For the cochain $\sigma_c$ we have
\begin{equation*} b\sigma_c=0,\ B\sigma_c=0\end{equation*}
\end{lemma}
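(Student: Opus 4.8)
The plan is to prove $b\sigma_c = 0$ and $B\sigma_c = 0$ by exploiting the relations already established for $\tau^r_c$ together with the exactness of the indicial map at the level of the relevant algebras. For $B\sigma_c = 0$: I would first observe that $\sigma_c$ is, up to the scalar factor $(-1)^{k+1}$ and the cocycle weights $c(g_1,\dots,g_k)$, a sum of the classical ``eta-type'' transgression cochains $\frac{i}{2\pi}\int \Tr(\widehat{B}_0 \cdots \widehat{B}_k \, d\widehat{B}_{k+1}/d\lambda)\,d\lambda$ on the algebra of indicial families; these are the standard cyclic cochains whose $B$ vanishes identically — indeed $B$ acting on a cochain of this shape produces a telescoping sum of $\int \Tr(d(\cdots)/d\lambda)\,d\lambda$-type terms which integrate to zero because the indicial families decay as $|\lambda|\to\infty$ (and because $\widehat{B}_i(\lambda)(1)$ vanishes by normalisation when the unit is inserted). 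So $B\sigma_c=0$ follows by the same elementary computation that gives $B\tau^r_c=0$; alternatively, one can simply note that $\sigma_c$ lies in the image of the ``suspension/eta'' construction, on which $B$ is known to vanish.

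For $b\sigma_c = 0$: the cleanest route is to use that $(\tau^r_c, \sigma_c)$ is being assembled as a \emph{relative} cyclic cochain for $I\colon \A_f \to \G_f$, for which the relative differential is $(\phi,\psi)\mapsto((b+B)\phi - I^*\psi, -(b+B)\psi)$. We already know from the preceding two lemmas that $B\tau^r_c = 0$ and $b\tau^r_c = I^*\sigma_c$, i.e. $(b+B)\tau^r_c = I^*\sigma_c$. Applying $b+B$ once more to the pair, and using $(b+B)^2 = 0$ on $CC^\bullet(\A_f)$, gives $0 = (b+B)^2\tau^r_c = (b+B) I^*\sigma_c = I^*((b+B)\sigma_c)$. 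Since the indicial map $I\colon \A_f \to \G_f$ is \emph{surjective}, $I^*$ is injective on cochains, and therefore $(b+B)\sigma_c = 0$ as a cochain on $\G_f$. Combined with $B\sigma_c = 0$ this yields $b\sigma_c = 0$ as well.

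The main obstacle — really the only subtle point — is the surjectivity/injectivity step: one must make sure that $I^*$ is genuinely injective on the relevant cochain groups, which uses that $I\colon \mathcal{A}_f \to \mathcal{G}_f$ is onto (stated in the text just before Lemma \ref{lemma:sigmac}). If one prefers to avoid this formal argument, the alternative is to verify $b\sigma_c=0$ by a direct computation: expand $b\sigma_c(B_0,\dots,B_{k+2})$, use that in the integrand the terms reassemble (as in the proof of the previous lemma) into a single commutator $\Tr[\widehat{B}_{k+2}, \widehat{B}_0\cdots \widehat{B}_{k+1}]$-type expression weighted by $c$, invoke that $c$ is a group cocycle together with Lemma \ref{lemma:group-cohom}(\ref{c}), and observe that the resulting $\int \Tr(d(\cdots)/d\lambda)\,d\lambda$ vanishes by the fundamental theorem of calculus and the decay of indicial families. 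This direct check is routine but tedious, so I would present the formal $(b+B)^2=0$ plus injectivity-of-$I^*$ argument as the proof and relegate the direct computation to a remark if needed.
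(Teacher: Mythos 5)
Your argument is correct and is essentially the paper's own proof: the paper also deduces the claim formally from $B\tau^r_c=0$, $b\tau^r_c=I^*\sigma_c$ and the injectivity of $I^*$ (coming from the surjectivity of the indicial map $I$). Your extra direct discussion of $B\sigma_c=0$ is harmless but not needed, since $I^*\bigl((b+B)\sigma_c\bigr)=(b+B)^2\tau^r_c=0$ already gives both components $b\sigma_c=0$ and $B\sigma_c=0$ by degree reasons.
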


\begin{proof}
This is an immediate consequence of the definitions and of the
previous Lemma, given that
$I^*$ is injective.
\end{proof}

\noindent
Summarizing, we have proved  the following:
 $(\tau^r_c,\sigma_c)\in C^k(\A_f)\oplus C^{k+1} (\G_f)$ and
 $$\left( \begin{array}{cc} b+B & -I^* \\ 0&-(b+B)
\end{array} \right) \left( \begin{array}{c} \tau^r_c \\ \sigma_c
\end{array} \right)=\left( \begin{array}{c} 0 \\ 0
\end{array} \right)
$$
We restate all this in the following important

\begin{theorem}
Let $c$ be a normalized group cocycle for $\Gamma$.
The cochain $\tau^r_c$ defined in \eqref{def:regularized-tau}
and the cochain $\sigma_c$ defined in Lemma \ref{lemma:sigmac}
define  together a
  relative cyclic cocycle $(\tau^r_{c}, \sigma_c)$ for $\mathcal{A}_f \xrightarrow{I} \mathcal{G}_f$.
\end{theorem}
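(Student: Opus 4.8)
The plan is to assemble the statement from the three lemmas already proved in this section, since the theorem is precisely the package-statement of those lemmas. Recall that by definition a relative cyclic cocycle for $\mathcal{A}_f \xrightarrow{I} \mathcal{G}_f$ of degree $k$ is a pair $(\phi,\psi) \in CC^k(\mathcal{A}_f) \oplus CC^{k+1}(\mathcal{G}_f)$ annihilated by the relative differential, i.e. satisfying $(b+B)\phi = I^*\psi$ and $(b+B)\psi = 0$; this is exactly the content of the differential displayed just before the theorem statement, applied to $(\tau^r_c,\sigma_c)$. So the proof is a matter of verifying each of the three scalar identities $(b+B)\tau^r_c = I^*\sigma_c$ and $(b+B)\sigma_c = 0$, which unwind (because the cochains sit in fixed degrees and because $B$ decreases degree by $2$ while $b$ raises it by $1$) into $B\tau^r_c = 0$, $b\tau^r_c = I^*\sigma_c$, $b\sigma_c = 0$ and $B\sigma_c = 0$.

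First I would observe that $\tau^r_c$ is a well-defined $(k+1)$-linear functional on $\mathcal{A}_f^+ \otimes \mathcal{A}_f^{\otimes k}$: each $A_i(g_i)$ lies in $\Psi^{-\infty,\epsilon}_b(M,E) + \Psi^{-\infty,\epsilon}(M,E)$, a product of such is again in this space, hence the $b$-trace ${}^b\Tr$ of the product makes sense, and the sum over $g_0 g_1 \cdots g_k = 1$ is finite because the $A_i$ have compact support in $\Gamma$; the prescription $\tau^r_c(A_0 + \omega\cdot 1, A_1,\dots,A_k)$ with the unit contributing nothing (no term with any $A_i = 1$, $i\ge 1$, survives because $c$ is normalised, Lemma \ref{lemma:group-cohom}(1)) shows it descends to the normalised complex. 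The normality $\tau^r_c(A_0,\dots,1,\dots) = 0$ and the vanishing $B\tau^r_c = 0$ are exactly the first Lemma of this section (whose one-line proof is: $B$ on a normalised cochain is, up to sign, cyclic antisymmetrisation, and the summand is already manifestly cyclically invariant under the relevant permutation once one uses $\sum g_i = 1$ together with Lemma \ref{lemma:group-cohom}(2)). Likewise $\sigma_c$ is a well-defined $(k+2)$-linear functional on $\mathcal{G}_f^+ \otimes \mathcal{G}_f^{\otimes (k+1)}$, using that an element of $\mathcal{G}_f$ has an indicial/Mellin-transformed family $\widehat{B}(\lambda)$ which is an $\RR^+$-invariant smoothing operator, so $\Tr$ (the honest trace on the cylinder cross-section) of the displayed product against $d\widehat{B}_{k+1}/d\lambda$ is a well-defined $\lambda$-integrable quantity.

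The core computations $b\tau^r_c = I^*\sigma_c$ and $b\sigma_c = B\sigma_c = 0$ are Lemma \ref{lemma:sigmac} and the Lemma immediately following it, both already established in the excerpt: the first by expanding $b\tau^r_c$ term by term, collapsing the alternating sum using that $c$ is a group cocycle (so that all but the commutator term cancel), and then applying Melrose's $b$-trace defect formula ${}^b\Tr[P,Q] = \frac{i}{2\pi}\int \Tr(I(P,\lambda)\,\frac{d}{d\lambda}I(Q,\lambda))\,d\lambda$ to rewrite the surviving ${}^b\Tr$ of a commutator as the indicial integral defining $\sigma_c$ — with the separate check of $b\tau^r_c(1,A_1,\dots,A_{k+1})$ handled via Lemma \ref{lemma:group-cohom}(3); the second because $I^*(b\sigma_c) = b(I^*\sigma_c) = b(b\tau^r_c) = 0$ and $I^*(B\sigma_c) = B(I^*\sigma_c) = B(b\tau^r_c) = -b(B\tau^r_c) = 0$ (using $bB = -Bb$ and $B\tau^r_c = 0$), combined with injectivity of $I^*$, which holds because $I : \mathcal{A}_f \to \mathcal{G}_f$ is surjective. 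Putting these together gives precisely the matrix identity displayed before the theorem, which by definition says $(\tau^r_c,\sigma_c)$ is a relative cyclic cocycle; this completes the proof. The only point requiring genuine care — and the step I expect to be the main obstacle — is the faithful bookkeeping of signs and of which group-variable the cocycle $c$ is evaluated on in the passage $b\tau^r_c \mapsto$ commutator term, i.e. making sure the cocycle identity for $c$ really produces exactly the single commutator term with the correct sign $(-1)^{k+1}$ and correct argument $c(g_1,\dots,g_k)$; but this is exactly the computation carried out in the proof of Lemma \ref{lemma:sigmac}, so here we simply cite it.
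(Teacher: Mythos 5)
Your proposal is correct and follows essentially the same route as the paper: the theorem is indeed just the assembly of the three preceding lemmas ($B\tau^r_c=0$, $b\tau^r_c=I^*\sigma_c$ via the $b$-trace defect formula and the cocycle identity for $c$, and $b\sigma_c=B\sigma_c=0$ via injectivity of $I^*$, which follows from surjectivity of $I$), yielding exactly the matrix identity displayed before the statement. Your extra remarks on well-definedness and on deducing $B\sigma_c=0$ from $bB=-Bb$ and $B\tau^r_c=0$ are consistent with, and slightly more explicit than, the paper's brief justifications.
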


\section{Continuity properties of the cocycles $(\tau^r_c,\sigma_c)$ and $\tau_c$}
\label{sect:continuity}
Let us recall the definition of  the Connes-Moscovici algebra $\B^\infty\subset C^*_r \Gamma$. See \cite{CM} and \cite{WuI} for more details.
Fix a word metric $| \cdot |$ on $\Gamma$.
Define an unbounded operator $D$ on $\ell^2(\Gamma)$ by setting
$D(e_\gamma)= |\gamma | e_\gamma $ where $(e_\gamma)_{\gamma \in
\Gamma}$ denotes the standard orthonormal basis of
$\ell^2(\Gamma)$. Then consider the unbounded derivation  $
\delta( T)= [D, T]$ on ${B} (\ell^2(\Gamma))$ and set
$$\mathcal{B}^\infty=\{ T \in C^*_r(\Gamma) \: | \; \forall k \in \NN,
\; \delta^k(T) \in {B}(\ell^2(\Gamma)) \}.$$ It is not difficult to prove that
$\CC\Gamma\subset \mathcal{B}^\infty$ and that $\mathcal{B}^\infty$ is dense
in $C^*_r \Gamma$.
We endow $\mathcal{B}^\infty$ with the topology defined by the restriction of
the $C^*_r \Gamma$-norm and the sequence of seminorms
\begin{equation}\label{seminorms-b-infinity}
\| T \|_j := \| \delta^j (T) \|
\end{equation}
where on the right hand side we have the operator norm in ${B}(\ell^2(\Gamma))$.
$\mathcal{B}^\infty$ is  a Fr\'echet (locally $m$-convex) algebra and it is
 closed under
holomorphic functional calculus in $C^*_r \Gamma$.

For the continuity properties of the relative cocycle $(\tau_c^r,\sigma_c)$
and of the cocycle $\tau_c$
 we shall not work directly with the seminorms defining $\mathcal{B}^\infty$  but will employ  instead  norms $\nu_k (\cdot)$ on $C^*_r \Gamma$ which, as proved in \cite[Lemma (6.4) (i)]{CM},  is continuous
 on $\mathcal{B}^\infty$. Let us recall the definition:

 \noindent
if $a\in C^*_r \Gamma$ and $k\in \NN$ we define
 \begin{equation}\label{norms-nu-k}
 \nu_k (a) = \left( \sum_{g\in\Gamma} (1+|g|)^{2k} | a(g) |^2 \right)^{1/2}
 \end{equation}
 Recall also that a  finitely generated discrete group  $\Gamma$ satisfies the rapid decay condition
 (RD) if there exists $k\in \NN$ and $C>0$ such that
 $$\| a \| ^2_{C^*_r (\Gamma)} \leq C \sum_{g\in \Gamma}  (1+|g|)^{2k} | a(g) |^2\,,\quad
 \forall a\in\CC\Gamma.$$
 It is a non-trivial result that Gromov hyperbolic groups satisfy the (RD) condition;
 moreover, for each $\xi\in H^\bullet (\Gamma;\CC)$ there exists a polynomially bounded 
 cocycle $c\in Z^\bullet (\Gamma;\CC)$ such that $\xi=[c]$.

 The main goal of this section is to establish the following
 proposition:

 \begin{proposition}\label{prop:extension}
 If $\Gamma$ is a finitely generated discrete group satisfying the rapid decay condition
 (RD) and $c\in Z^k (\Gamma;\CC)$ has polynomial growth with respect to a word metric $|\cdot |$ then
\begin{equation}\label{extend-tau-r}
\tau_c^r\;\; \text{extends continuously from}\;\; \A_f\;\; \text{to}\;\; \A\,;
\end{equation}
\begin{equation}\label{extend-sigma}
\sigma_c\;\; \text{extends continuously from}\;\; \G_f\;\; \text{to}\;\; \G\,.
\end{equation}
\begin{equation}\label{extend-tau}
\tau_c\;\; \text{extends continuously from}\;\; \J_f\;\; \text{to}\;\; \J\,.
\end{equation}
Moreover, the extended pair $(\tau_c^r,\sigma_c)$ is a relative cyclic cocycle
for $\A\xrightarrow{I} \G$.
 \end{proposition}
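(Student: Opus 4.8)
The statement asserts three continuous-extension claims — for $\tau_c^r$ on $\A$, for $\sigma_c$ on $\G$, and for $\tau_c$ on $\J$ — together with the fact that the extended pair $(\tau_c^r,\sigma_c)$ remains a relative cyclic cocycle. My plan is to treat the three extension statements as essentially one estimate, carried out first on $\A$ (the $b$-trace case) and then noting that the $\G$ and $\J$ cases are either special cases or strictly easier variants of the same bound. The cocycle property at the end is then automatic by continuity and density. The main obstacle is the first estimate: bounding
\[
\bigl|\tau_c^r(A_0,A_1,\dots,A_k)\bigr| \;=\; \Bigl| \sum_{g_0 g_1\cdots g_k =1} {}^b\Tr\bigl(A_0(g_0)A_1(g_1)\cdots A_k(g_k)\bigr)\, c(g_1,\dots,g_k)\Bigr|
\]
by a product of Fr\'echet seminorms of the $A_i$ on $\A$. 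Two features make this delicate: (i) the $b$-trace is not a trace and is not continuous in the naive Schatten sense, so one must use the structure of the $b$-calculus with $\epsilon$-bounds to control ${}^b\Tr$ of a product in terms of calculus-seminorms of the factors; and (ii) the polynomial growth of $c$ must be absorbed by the rapid-decay hypothesis via the norms $\nu_k(\cdot)$ from \eqref{norms-nu-k}.

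\textbf{Step 1: the key pointwise estimate.} Since $c$ has polynomial growth, there are $C>0$ and $N\in\NN$ with $|c(g_1,\dots,g_k)|\le C\,(1+|g_1|)^N\cdots(1+|g_k|)^N$. On the constraint surface $g_0g_1\cdots g_k=1$ we have $|g_0|\le |g_1|+\cdots+|g_k|$, so a single polynomial weight $(1+|g_0|+\cdots+|g_k|)^{M}$ with $M=kN$ dominates $|c|$. Write each $A_i\in\A_f$ as the function $g\mapsto A_i(g)$ valued in $\Psi_b^{-\infty,\epsilon}(M,E)+\Psi^{-\infty,\epsilon}(M,E)$. The point is to interpolate: distribute the polynomial weight $(1+|g_0|+\dots+|g_k|)^M \le c_k\prod_i (1+|g_i|)^M$ among the $k+1$ factors. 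Then, after choosing a norm $|\!|\!|\cdot|\!|\!|$ on the relevant $b$-calculus space of operators on $M$ for which ${}^b\Tr$ of a $(k+1)$-fold product is bounded by the product of the $|\!|\!|\cdot|\!|\!|$-norms (here one invokes Melrose's $b$-calculus with $\epsilon$-bounds — the ${}^b\Tr$-estimate is where $\epsilon<\delta$ enters, controlling the behaviour along the cylinder — see \cite{Melrose} and \cite{MPI}), Cauchy--Schwarz over $\Gamma^{k+1}$ together with the defining constraint bounds the sum by $C'\prod_{i=0}^{k}\nu_{M+M'}\bigl(g\mapsto |\!|\!|A_i(g)|\!|\!|\bigr)$ for a suitable $M'$ coming from the summability of $(1+|g|)^{-2M'}$ over $\Gamma$ (polynomial growth of $\Gamma$ in a ball, which holds since $\Gamma$ has (RD)). Finally, the seminorm $a\mapsto\nu_j(a)$ is continuous on $\B^\infty$ by \cite[Lemma~(6.4)(i)]{CM}, so the whole expression is continuous in the Fr\'echet topology of $\A$, proving \eqref{extend-tau-r}.

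\textbf{Step 2: the cases of $\sigma_c$ and $\tau_c$.} For $\tau_c$ on $\J$ the argument is strictly easier: on $\J$ the relevant operators are genuinely residual (order $-\infty$ in the ordinary calculus), ${}^b\Tr$ is replaced by the honest trace $\Tr$, and the same Cauchy--Schwarz-over-$\Gamma^{k+1}$ plus $\nu_k$ bookkeeping applies verbatim; this gives \eqref{extend-tau}. For $\sigma_c$ on $\G$, the formula involves, instead of ${}^b\Tr$, the expression $\tfrac{i}{2\pi}\int d\lambda\,\Tr(\widehat B_0(\lambda)(g_0)\cdots\tfrac{d}{d\lambda}\widehat B_{k+1}(\lambda)(g_{k+1}))$; here the integrand is a smooth rapidly-decaying-in-$\lambda$ function of trace-class operators (by the $\RR^+$-invariant $b$-calculus estimates), so one first bounds the $\lambda$-integral by a seminorm of each indicial factor uniformly, then runs the identical $\Gamma^{k+1}$-sum estimate. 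This yields \eqref{extend-sigma}. I expect Step 2 to be routine once Step 1 is in place, with the only mild care being the $\lambda$-derivative and the $L^1(\RR_\lambda)$-integrability, both standard in Melrose's calculus.

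\textbf{Step 3: the cocycle property persists.} We already know from the lemmas in Section \ref{sect:relative-cocycles} that $(b+B)\tau_c^r = I^*\sigma_c$ and $(b+B)\sigma_c=0$ hold as identities of multilinear functionals on the dense subalgebras $\A_f,\G_f$. All the operations $b$, $B$, $I^*$ are continuous for the Fr\'echet topologies (the products appearing in $b$ are jointly continuous on $\A$ and $\G$, and $I\colon\A\to\G$ is continuous), so by Steps 1--2 and density the same identities hold for the extended cochains on $\A$ and $\G$. Hence $(\tau_c^r,\sigma_c)$ is a relative cyclic cocycle for $\A\xrightarrow{I}\G$, completing the proof. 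The genuine work is entirely in Step 1 — combining the $b$-calculus continuity of ${}^b\Tr$ on products with the rapid-decay absorption of the polynomial weight coming from $c$.
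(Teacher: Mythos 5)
Your overall architecture (control ${}^b\Tr$ of products by suitable seminorms, absorb the polynomial growth of $c$ using (RD), then get the cocycle identity for the extensions by density) is the same as the paper's, but two of your key steps have genuine gaps. The more serious one is your treatment of the group sum: you claim the constrained sum over $\{g_0g_1\cdots g_k=1\}$ can be closed by Cauchy--Schwarz plus ``summability of $(1+|g|)^{-2M'}$ over $\Gamma$'', justified by ``polynomial growth of $\Gamma$ in a ball, which holds since $\Gamma$ has (RD)''. This is false: (RD) does not imply polynomial growth --- Gromov hyperbolic groups, the main class of interest here, satisfy (RD) and have exponential growth, so $(1+|g|)^{-2M'}$ is never summable. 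The rapid decay hypothesis must enter in a different way, namely through the Connes--Moscovici/Jolissaint inequality (Lemma \ref{lemma:lemma3} in the paper): for polynomially bounded $c$ there exist $m$ and $C$ with $\sum_{g_0\cdots g_k=1}|f_0(g_0)\cdots f_k(g_k)\,c(g_1,\dots,g_k)|\le C\,\nu_m(f_0)\cdots\nu_m(f_k)$; the proof of this uses (RD) to bound operator norms of convolution operators on $\ell^2(\Gamma)$, not summability of polynomial weights. Without this (or an equivalent substitute), your Step 1 does not close.

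The second gap is that you postulate, rather than produce, a norm $|\!|\!|\cdot|\!|\!|$ on $\Psi^{-\infty,\epsilon}_b(M,E)+\Psi^{-\infty,\epsilon}(M,E)$ for which $|{}^b\Tr(P_0\cdots P_k)|\le C\,|\!|\!|P_0|\!|\!|\cdots|\!|\!|P_k|\!|\!|$; since the $b$-trace is not a trace and is not Schatten-continuous, this estimate is exactly the crux. The paper obtains it from the Lesch--Moscovici--Pflaum formula ${}^b\Tr(P)=-\Tr(\phi[\mathcal{V},P])+\Tr(\chi P)$ (see \eqref{LMP}), which converts the $b$-trace into honest traces; one then defines $|\!|\!|P|\!|\!|$ by combining $\|\chi P\|_1$, $\|\phi[\mathcal{V},P]\|_1$, $\|[\mathcal{V},P]\|_1$, $\|[\phi,P]\|$ and $\|P\|$, and a Leibniz-type computation gives the product bound (Lemma \ref{lemma:lemma1}). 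Relatedly, you take for granted that your seminorms $g\mapsto|\!|\!|A_i(g)|\!|\!|$, weighted by $(1+|g|)^{2m}$, are finite on all of $\A$ (and similarly on $\J$ and $\G$), and you call the $\sigma_c$ case routine; in fact these finiteness statements (Lemmas \ref{lemma:lemma2} and \ref{lemma:lemma2J}) and the decay $\nu_m^2(f_j(\lambda))\le C/(1+\lambda^2)$ needed to make the $\lambda$-integral in $\sigma_c$ converge require the $\epsilon$-bounds of the calculus together with Fourier-transform estimates for translation-invariant kernels on the cylinder, and they occupy most of the paper's proof. Your Step 3 (cocycle property by density and continuity) does agree with the paper.
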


\begin{proof}
The last statement follows by continuity from the corresponding statement
for  $\A_f\xrightarrow{I} \G_f$. We thus turn to  \eqref{extend-tau-r}, \eqref{extend-sigma}, \eqref{extend-tau}.
The main difficulty in establishing \eqref{extend-tau-r} comes from the use of the
 $b$-Trace in the definition of $\tau^r_c$. Crucial in our argument will be the following
 Proposition, due to Lesch, Moscovici and Pflaum, see \cite[Proposition 2.6]{LMP2}.

 Before stating it, we introduce some notation. Let $\phi\in C^\infty (M)$ be a function equal to $t$ on the cylindrical end $(-\infty, 0]\times \pa M_0\subset M$. Let $\mathcal{V}$ be a vector field   equal to
 $\pa/\pa t$ on the cylindrical end.
  In particular   $ \mathcal{V} (\phi)=1$ on the cylindrical end.  Let $\chi:= 1-\mathcal{V} (\phi) \in C^\infty_c (M_0\setminus \pa M_0)$.

 \begin{proposition} (Lesch-Moscovici-Pflaum)
 If $P\in A:= \Psi^{-\infty,\epsilon}_b (M,E) + \Psi^{-\infty,\epsilon} (M,E) $ then

 \begin{equation}\label{LMP}
  {}^b\Tr (P)= -\Tr (\phi [\mathcal{V},P]) + \Tr (\chi P)\,.
  \end{equation}
 \end{proposition}
 Consequently, the $b$-Trace of $P$ is the difference of the traces of two
 trace-class operators
 naturally associated to $P$.  On the basis of this Proposition and of the next Lemma (in particular
 its proof), we give the following

 \begin{definition}\label{def:triple}
 If $P\in A$, with $A:= \Psi^{-\infty,\epsilon}_b (M,E) + \Psi^{-\infty,\epsilon} (M,E) $
 then
 \begin{equation}\label{triple}
 ||| P |||^2 := \| \chi P \|^2_1 + \| \phi [\mathcal{V},P] \|^2_1 + \| [\mathcal{V},P] \|^2_1
 + \| [\phi ,P] \|^2 + \| P \|^2
 \end{equation}
 with the last two norms denoting the $L^2$-operator norm.
 \end{definition}

 \begin{lemma}\label{lemma:lemma1}
 If $P_j\in \Psi^{-\infty,\epsilon}_b (M,E) + \Psi^{-\infty,\epsilon} (M,E)$, $j\in \{0,1,\dots,k\}$, then there exists $C>0$ such that
 \begin{equation}\label{inequality-lemma1}
 |   {}^b\Tr  (P_0 P_1 \cdots P_k)| \leq  C ||| P_0 ||| \cdots ||| P_k |||
 \end{equation}
 \end{lemma}

 \begin{proof}
 Using formula \eqref{LMP} we see that
 \begin{align*}
& |   {}^b\Tr  (P_0 P_1 \cdots P_k)|  \\
&\leq   | \Tr (\phi  [\mathcal{V},P_0 P_1 \cdots P_k])| + |\Tr (\chi P_0 P_1 \cdots P_k)| \\
&\leq  \sum_i | \Tr (\phi P_0 \cdots [ \mathcal{V},P_i] \cdots P_k)|  + |\Tr (\chi P_0 P_1 \cdots P_k)|\\
& \leq \sum_{j<i} | \Tr (\phi P_0 \cdots [\phi ,P_j]\cdots [ \mathcal{V},P_i] \cdots P_k)|
+ \sum_i | \Tr (P_0 \cdots \phi [\mathcal{V},P_i] \cdots P_k) | +
|\Tr (\chi P_0 P_1 \cdots P_k)|\\
& \leq  \sum_{j<i} \|P_0\| \cdots \widehat{j}\cdots \widehat{i}
\cdots \|P_k\| \| [\phi,P_j]\| \| [\mathcal{V},P_i]\|_1 + \sum_i \| P_0\| \cdots \|P_{i-1}\| \| \phi [\mathcal{V},P_i]\|_1 \cdots \|P_k\| +\|\chi P_0\|_1 \|P_1\|\cdots \|P_k\|\\
&\leq C ||| P_0 ||| \cdots ||| P_k |||\,. \end{align*}

 \end{proof}
We now introduce norms on $\mathcal{A}_f:= \Psi^{-\infty,\epsilon}_b (M,E\otimes \CC\Gamma) + \Psi^{-\infty,\epsilon} (M,E\otimes \CC\Gamma)$. If $\P\in \mathcal{A}_f$ then, as already remarked,
$$\P=\sum_{g\in \Gamma} P(g) g$$
with $P(g)\in A$ and where  the sum is finite.
We set
\begin{equation}\label{triple-k}
||| \P |||^2_k := \sum_{g\in \Gamma} ||| P(g) |||^2 (1+ |g|)^{2k}
\end{equation}

\begin{lemma}\label{lemma:lemma2}
Let $k\in\NN$. If $\P\in\A$ then $||| \P |||_k < \infty$.
Consequently, $\A$ is contained is the closure  of $\A_f$ with respect
to the norm $|||\cdot|||_k$.
\end{lemma}
We now recall the following fundamental result, due to Connes-Moscovici and Jolissant.
\begin{lemma}\label{lemma:lemma3}
 Let $\Gamma$ be a discrete finitely generated group satisfying the rapid decay
 condition (RD). Let $c\in Z^k (\Gamma;\CC)$ be polynomially bounded.
 Let $f_j\in\CC\Gamma$, $j\in\{0,1,\dots,k\}$. Then there exists $m\in \NN$
 and $C>0$ such that
 \begin{equation}\label{lemma3}
 \sum_{g_0\cdots g_k=1} |f_0 (g_0) f_1 (g_1) \cdots f_k (g_k)c(g_1,\dots,g_k) |
 \leq C \nu_m (f_0) \cdots \nu_m (f_k)\,.
 \end{equation}
 \end{lemma}

 Granted Lemma \ref{lemma:lemma2} we can now conclude the proof of
 \eqref{extend-tau-r}. Indeed, let $\P_0,\dots,\P_k$ be elements in $\A_f$
 and consider $f_j\in\CC\Gamma$ defined by $f_j (g) := ||| P_j (g) |||$; then
 \begin{align*}
  |\tau_c^r (\P_0,\dots,\P_k)| &\leq \sum_{g_0 g_1 \cdots g_k=1} |   {}^b\Tr (\P_0 (g_0)
  \cdots \P_k (g_k) c(g_1,\dots,g_k)|\\
  &\leq C\sum_{g_0 g_1 \cdots g_k=1} ||| \P_0 (g_0) ||| \cdots ||| \P_k (g_k)||| \;
  |c(g_1,\dots,g_k)| \\
  &\leq C^\prime \nu_m (f_0) \cdots \nu_m (f_k) = C^\prime ||| \P_0 |||_m \cdots
  ||| \P_k |||_m
  \end{align*}
where we have  used Lemma \ref{lemma:lemma1} in the second inequality
and Lemma \ref{lemma:lemma3} in the third inequality. This shows that
there exists $m\in \NN$ such that $\tau^r_c$ extends continuously
to the closure of $\A_f$ with respect to the $|||\cdot|||_m$-norm. By Lemma
\ref{lemma:lemma2} we conclude that $\tau^r_c$ extends continuously
to $\A$, which is the content of \eqref{extend-tau-r}.

A similar, easier, argument  proves \eqref{extend-tau}, the extension of $\tau_c$ from $\mathcal{J}_f$ to $\mathcal{J}$ for groups satisfying (RD) and group cocycles
that are polynomially bounded. Indeed, recall that elements in the residual calculus
are trace class; moreover the following Lemma holds:

\begin{lemma}\label{lemma:lemma2J}
Let $k\in\NN$. If $\R=\sum_{g\in\Gamma} R(g) g \in \mathcal{J}$,  then
\begin{equation}\label{lemma2J}
\sum_{g\in \Gamma} || R(g) ||_1 ^2 (1+ |g|)^{2k} < \infty
\end{equation}
Notice that this implies that
\begin{equation}\label{lemma2J-bis}
\sum_{g\in \Gamma} || R(g) || ^2 (1+ |g|)^{2k} < \infty
\end{equation}
\end{lemma}
Consequently $\mathcal{J}$ is contained in the closure of $\mathcal{J}_f$
with respect to the norms defined by the left hand side of \eqref{lemma2J}
and \eqref{lemma2J-bis}.
Adapting (in an easier situation) the arguments given above for  \eqref{extend-tau-r} we conclude that \eqref{extend-tau} holds.

\bigskip
\noindent
{\bf End of the proof of Proposition \ref{prop:extension}.}\\
We need to establish Lemma \ref{lemma:lemma2}, Lemma \ref{lemma:lemma2J}
 as well as
\eqref{extend-sigma}.\\
We shall first establish results for an element $S$ in $G$ (where we recall
that $G$ is the space  of $\RR^+$-invariant operators in the $b$-calculus
with $\epsilon$-bounds in the
compactified positive normal bundle to the boundary) and then, in the Mishchenko-Fomenko context, for an element $\mathcal{S}$ in $\mathcal{G}$.
By making the substitution
$t=\log x$, we will be equivalently looking at translation invariant operators on the infinite cylinder;
the estimates appearing in the definition of calculus with bounds translate then into weighted exponential bounds, i.e. with respect to $e^{|t|\epsilon}$,
at $t=\pm \infty$. More generally, we can consider any smooth
closed compact manifold $N$, not necessarily a boundary, and the infinite cylinder
$N\times\RR$; all the arguments that will be given below apply to this general setting.
An operator $S$ in $G$ can then be seen as a Schwartz kernel $K_S (x,y,t)$
on $N \times N\times \RR$, acting as a convolution operator in the $t$-variable.
In order to simplify the notation we shall often
write $K$, and not $K_S$, for the Schwartz kernel of $S$. We denote by $\widehat{K} (\lambda):=\mathcal{F}_{t\to
\lambda} (K)$ the
Fourier transform, in $t$, of the kernel $K$; this is a smooth family of smoothing kernels
on $N\times N$ which is rapidly decreasing, with all its derivatives,  in $\lambda$ as $\lambda\to \pm\infty$. We make a small abuse of notation and  keep the same symbol
for the smoothing kernel $\widehat{K_S}(\lambda)$ and the smoothing operator it defines
on $N$.

We begin by establishing a number of elementary results about $S$, $K_S$ and $\widehat{K_S}(\lambda)$.
First notice that $\widehat{K_S}:\RR\to B (\mathcal{H})$, with $\mathcal{H}=L^2 (N)$; the family $\widehat{K_S}$ acts in a natural way on $L^2 (\RR,\mathcal{H})$ (by
multiplication in the $\RR$ variable and by its natural action on $\mathcal{H}$) and
as such
has a norm $\| \widehat{K_S}\|_{B (L^2 (\RR,\mathcal{H}))}$. Since
Fourier transform interchanges convolution and multiplication we clearly have
\begin{equation}\label{1}
\|S\|_{L^2 (\RR_t \times N)} = \| \widehat{K_S}\|_{B (L^2 (\RR_\lambda,\mathcal{H}))}.
\end{equation}
On the other hand we observe that
\begin{equation}\label{2}
\| \widehat{K_S}\|_{B (L^2 (\RR_\lambda,\mathcal{H}))}
\leq \sup_{\lambda\in\RR} \| \widehat{K_S}(\lambda)\|_{B (\mathcal{H})}.
\end{equation}
Indeed, using $K$ instead of $K_S$, we have for any $f\in L^2 (\RR,\mathcal{H})$:
\begin{align*}
\langle \widehat{K} f,  \widehat{K} f \rangle_{L^2 (\RR,\mathcal{H})} &= \int_\RR \langle \widehat{K} (\lambda) f (\lambda),  \widehat{K} (\lambda) f (\lambda)  \rangle_{\mathcal{H}} \,d\lambda\\
&\leq \int_\RR \| \widehat{K} (\lambda)\|^2_{B (\mathcal{H})}  \|f(\lambda)\|^2_{\mathcal{H}}\, d\lambda \\
&\leq \sup_{\lambda\in\RR} \| \widehat{K}(\lambda)\|^2_{B (\mathcal{H})}  \int_\RR \|f(\lambda) \|^2_{\mathcal{H}} \,d\lambda\\
&= (\sup_{\lambda\in\RR} \| \widehat{K}(\lambda)\|^2_{B (\mathcal{H})} ) \,\|f\|^2_{L^2 (\RR,\mathcal{H})}
\end{align*}
Putting \eqref{1} and \eqref{2} together and using a well known inequality
we obtain
\begin{equation}
\|S\|_{L^2 (\RR_t \times N)}\leq \sup_{\lambda\in\RR} \| \widehat{K_S}(\lambda)\|_{B (\mathcal{H})} \leq \sup_{\lambda\in\RR} \| \widehat{K_S}(\lambda)\|_{{\rm HS}}
\end{equation}
with $\| \cdot \|_{{\rm HS}}$ denoting the Hilbert-Schmidt norm.

\noindent
Now, let $H$ any Hilbert space, for example the Hilbert space of Hilbert-Schmidt
operators on $L^2 (N)$. For any  smooth (non-vanishing) rapidly decreasing
function $\kappa:\RR\to H$ we have,
\begin{align*}
\| \kappa (\lambda) \| &=\| \int_\RR \widehat{\kappa} (\xi ) e^{i \xi \lambda} d\xi  \,\| \leq
\int_\RR \|\widehat{\kappa} (\xi) \|d\xi\\
&=\int_\RR \left \langle \widehat{\kappa} (\xi) \sqrt{1+\xi^2}, \frac{\widehat{\kappa} (\xi)}{\|\widehat{\kappa} (\xi)\|} \frac{1}{\sqrt{1+\xi^2} } \right \rangle d\xi\\
&\leq C \left( \int_\RR \|\widehat{\kappa} (\xi)\|^2 (1+\xi^2)d\xi \right)^{1/2}\\
&= C   \left( \|\kappa \|_{L^2 (\RR,H)}  +  \|\kappa^\prime \|_{L^2 (\RR,H)} \right)^{1/2}
\end{align*}
where $C=\sqrt{\int_\RR\frac{1}{1+\xi^2 } d\xi}=\sqrt{\pi}$.
In particular, we can apply this to $\widehat{K_S}:\RR\to H$, with $H=S_2 (L^2 (N))$,
the Hilbert space of Hilbert-Schmidt operators on $L^2 (N)$, obtaining the existence of $C>0$
\begin{equation}\label{3}
\sup_{\lambda\in\RR} \| \widehat{K_S}(\lambda)\|^2_{{\rm HS}}\leq C \left(\int_\RR
\| \widehat{K_S}(\lambda)\|^2_{{\rm HS}} d\lambda + \int_\RR
\left \| \frac{d}{d\lambda} \widehat{K_S}(\lambda) \right \|^2_{{\rm HS}} d\lambda \right)
\end{equation}
Thus, there exists $C>0$ such that
\begin{equation}\label{4}
\| S \|^2_{L^2 (\RR_t \times N)}\leq C \left(\int_\RR
\| \widehat{K_S}(\lambda)\|^2_{{\rm HS}} d\lambda + \int_\RR
\left \| \frac{d}{d\lambda} \widehat{K_S}(\lambda)\right \|^2_{{\rm HS}} d\lambda \right).
\end{equation}
Notice that the right hand side is nothing but
$$ C\left( \int_{N\times N \times \RR} |\widehat{K_S} (y,y^\prime,\lambda)|^2 \,dy \,dy^\prime  \,d\lambda
+ \int_{N\times N \times \RR} \left | \frac{d}{d\lambda} \widehat{K_S}(y,y^\prime,\lambda) \right |^2
\, dy\,  dy^\prime\, d\lambda \right)\,.$$
Using elementary properties of the Fourier transform we conclude that the following
Lemma holds true:

\begin{lemma}\label{lem1}
For a translation invariant smoothing operator on $\RR\times N$ with weighted
exponential bounds at infinity we have
\begin{equation}\label{5}
\| S \|^2_{L^2 (\RR \times N)}\leq C\left( \int_{N\times N \times \RR} |K_S (y,y^\prime,t)|^2 (1+t^2)
\, dy\,  dy^\prime\,  dt\right)
\end{equation}
for some universal constant $C$.

\end{lemma}

We can now end the proof of Lemma \ref{lemma:lemma2}.
Our goal is to show that if $\P\in \A$ then
$ \sum_{g\in\Gamma} ||| \P (g) ||| (1+|g|)^{2k}$ is finite. Here $|||\;\;|||$ is the
norm introduced in Definition \ref{def:triple}. With respect to the notation introduced
in that definition we observe that:
 \begin{equation}\label{remark-later}
  P\in A \Rightarrow \chi P\in J\,,\;\;
  [\mathcal{V},P]\in J\,,\;\; \phi [\mathcal{V},P] \in J
  \end{equation}
 We also remark that
  \begin{equation}\label{remark-later-bis}
  P\in A \Rightarrow
   [\phi,P]\in A\,.
    \end{equation}
On the basis of \eqref{remark-later} \eqref{remark-later-bis}
we conclude that
it suffices to show that
\begin{equation}
\P\in\A \Rightarrow  \sum_{g\in\Gamma} \| \P (g) \|(1+|g|)^{2k}
< \infty
\end{equation}
and
\begin{equation}\label{6}\R\in\J \Rightarrow  \sum_{g\in\Gamma} \| \R (g) \|_1 (1+|g|)^{2k}
< \infty
\end{equation}
the latter being in fact the content of Lemma \ref{lemma:lemma2J}.

We now prove \eqref{5}.
To this end we fix a cut-off function near the boundary,
equal to 1 on the boundary and equal to 0 outside a collar neighborhood of the
boundary. Using this cut-off function we can define a section $s: \G\to \A$ to
the indicial homomorphism $I: \A \to \G$. If $\P\in\A$ then we know that we can write
$\P= \P_0 + \P_1$ with $\P_0=s(I(\P))$ and $\P_1\in\J$. Put it differently,
we write $\P$ in terms of its Taylor series at the front face.
We then have
\begin{align*}
& \sum_{g\in\Gamma} \| \P (g) \|(1+|g|)^{2k} \leq C
\sum_{g\in\Gamma} (\| I(\P) (g) \| (1+|g|)^{2k}  + \| \P_ 1 (g)\| (1+|g|)^{2k} )
\end{align*}
We consider the two distinct series
$$
\sum_{g\in\Gamma} \| I(\P) (g) \| (1+|g|)^{2k}  \;\;\text{and}\;\;
\sum_{g\in\Gamma}  \| \P_ 1 (g)\| (1+|g|)^{2k} $$
and we show that they are both convergent; this will suffice.

\noindent
Using  Lemma \ref{lem1}
the term on the left can be bounded by
\begin{align*}
& \sum_{g\in \Gamma}  \int_{N \times N \times \RR_t}
|K_{I(\P)(g)} (y,y^\prime,t)|^2 (1+t^2) (1+|g|)^{2k} \,dy\,dy^\prime \,dt \\
&\leq C \sum_{g\in \Gamma}  \int_{N \times N \times \RR_t}
|K_{I(\P)(g)} (y,y^\prime,t)|^2 \exp \left(\frac{\epsilon}{2} |t| \right) (1+|g|)^{2k} \,dy\,dy^\prime \,dt
\end{align*}
with $N=\pa M_0$.
Now, by assumption, $I(\P)$ is a translation invariant smoothing operator in the $\B^\infty$-Mishchenko-Fomenko
calculus with $\epsilon$-bounds, thus, in particular
$$\sum_{g\in\Gamma}   |K_{I(\P)(g)} (y,y^\prime,t)|^2\exp(\epsilon |t|) (1+|g|)^{2k}
$$
is convergent and uniformly bounded in $N\times N\times \RR$. We can integrate
this series with respect to the finite measure $dy \,dy^\prime \, \exp(-\frac{\epsilon}{2} |t|)dt$
and obtain a finite number; since we can interchange the summation and the integration we conclude that
$$\sum_{g\in \Gamma}  \int_{N \times N \times \RR_t}
|K_{I(\P)(g)} (y,y^\prime,t)|^2 \exp \left(\frac{\epsilon}{2} |t|\right) (1+|g|)^{2k} \,dy\,dy^\prime \,dt < \infty$$
and this implies that
\begin{equation}\label{6.75}
 \sum_{g\in\Gamma} \| I(\P) (g) \| (1+|g|)^{2k} < \infty
\end{equation}
as required.

\noindent
Next we tackle the sum $\sum_{g\in\Gamma}  \| \P_ 1 (g)\| (1+|g|)^{2k} $
or, more generally, the sum $\sum_{g\in\Gamma}  \| \R (g)\| (1+|g|)^{2k} $
for any element $\R$ in $\mathcal{J}$. This is very similar to the closed case
analyzed in \cite{CM}, given that the elements $\R(g)$ are  residual. Indeed,
if $R$ is residual, $R\in J:=\Psi^{-\infty,\epsilon} (M)$, then, in particular, $R$ is a Hilbert-Schmidt operator, in fact, even trace class.
This means that
\begin{equation}\label{7}
\|R\|^2 \leq \| R\|^2_{{\rm HS}} =  \int_{M\times M} |K_R|^2 d{\rm vol}_{M\times M}
\end{equation}
Let now   $e(\epsilon)\in C^\infty (M)$ be  a non vanishing
function equal
to 1 on $M_0$ and equal to $\exp (\epsilon |t|)$ along the cylindrical
end $(-\infty,1]\times \pa M_0$.
Consider  $\R\in \Psi^{-\infty,\epsilon}(M,\B^\infty)$; then, in particular,
$$\sum_{g\in \Gamma}(|K_{\R(g)}|^2  (e(\epsilon)\boxtimes e(\epsilon))) (p,p^\prime))(1+|g|)^{2k},\quad p,p^\prime\in M$$
is convergent and uniformly bounded in $M\times M$. We
can integrate this series with respect to the finite measure
$(e(\epsilon)\boxtimes e(\epsilon))^{-1}d{\rm vol}_{M\times M}$; interchanging summation
and integration we
conclude that
$$\sum_{g\in \Gamma} \int_{M\times M} |K_{\R(g)} |^2 d{\rm vol}_{M\times M} (1+|g|)^{2k}  < \infty$$
Thus
\begin{align*}
\sum_{g\in\Gamma} \|\R(g)\|(1+|g|)^{2k} &
\leq \sum_{g\in\Gamma} \|\R(g)\|_{{\rm HS}}(1+|g|)^{2k}
\\
&=\sum_{g\in \Gamma} \int_{M\times M} |K_{\R(g)} |^2 d{\rm vol}_{M\times M} (1+|g|)^{2k}  < \infty \\
\end{align*}
which is what we wanted to show.
\noindent
Summarizing, we have established \eqref{5}.

\noindent
Regarding \eqref{6}: we know  that
if $R$ is residual then $R$ is trace class. We want to estimate
$\| R \|_1$.
Write $R= ((1+\Delta)^{-\ell} \rho ) (\rho^{-1} (1+\Delta)^{\ell} R)$ with $\ell>\dim M$ and
 $\rho:=e(\epsilon/2)$ (thus $\rho\in C^\infty (M)$ is a non vanishing
function equal
to 1 on $M_0$ and equal to $\exp ((\epsilon/2) |t|)$ along the cylindrical
end $(-\infty,1]\times \pa M_0$).
Then $(1+\Delta)^{-\ell} \rho $ is trace class and we have
$$\| R \|_1 \leq \| (1+\Delta)^{-\ell} \rho )\|_1 \| \rho^{-1} (1+\Delta)^{\ell} R\|\leq C \| \rho^{-1} (1+\Delta)^{\ell} R\|$$
The term $\| \rho^{-1} (1+\Delta)^{\ell} R\|$ can be treated exactly as
above, given that $(1+\Delta)^{\ell} R$ is still residual and the term $\rho^{-1}$
can be absorbed easily in the estimates. Proceeding as above, using the hypothesis
that $\R\in \Psi^{-\infty,\epsilon}(M,\B^\infty)$, we conclude that \eqref{6} holds true.

\bigskip
We are left with the task of proving \eqref{extend-sigma}, i.e. that $\sigma_c$ extends continuously
from $\G_f$ to $\G$. Recall the definition of $\sigma_c$ on $\G_f$. If $B_j\in\G_f$
and
  $\widehat{B}_j (\lambda)=\sum_{g\in \Gamma} \widehat{B}_j (\lambda) (g) g$, $j=0,\dots k+1$,
then
\begin{align*}
&\sigma_c (B_0 +\omega\cdot 1,B_1, \dots, B_{k+1})\\
&:=(-1)^{k+1} \sum_{g_0\cdots g_{k+1}=1}
 \frac{i}{2\pi}\int d\lambda  \Tr \left( (\widehat{B}_0 (\lambda)(g_0)
\cdots \widehat{B}_k (\lambda)(g_k) \frac{d \widehat{B}_{k+1}(\lambda)(g_{k+1})}{d\lambda} \right)
c(g_1,g_2, \dots,g_{k})
\end{align*}
Let
$$
f_j (\lambda,g):= \|  \widehat{B}_k (\lambda)(g) \|\,,j\in\{0,1,\dots,k\} \;\;\text{and}\;\;
f_{k+1} (\lambda,g):=   \left \|  \frac{d \widehat{B}_{k+1}(\lambda)(g)}{d\lambda}\right \|_1 \,.$$
We obtain corresponding elements $f_\ell (\lambda)\in\CC\Gamma$, $\ell\in\{0,1,\dots,k+1\}$. Well-known estimates for the trace-class norm, together with Lemma \ref{lemma:lemma3},
give the existence of $m\in\NN$ such that
\begin{align*}&\sum_{g_0\cdots g_{k+1}} \Tr \left( (\widehat{B}_0 (\lambda)(g_0)
\cdots \widehat{B}_k (\lambda)(g_k) \frac{d \widehat{B}_{k+1}(\lambda)(g_{k+1})}{d\lambda} \right)
c(g_1,g_2, \dots,g_{k})\\
&\leq \nu_m (f_0 (\lambda)) \cdots \nu_m (f_{k+1} (\lambda))
\end{align*}
Easy arguments show that in order to complete the proof of \eqref{extend-sigma} it suffices   to show the following:

\bigskip
\noindent
{\bf Claim:} {\it if $B_j\in\G$ then $\nu_m^2 (f_j (\lambda))$, $j\in\{0,\dots,k+1\}$, is finite and bounded
by $1/(1+\lambda^2)$}.

\bigskip
\noindent
Recall that if $S\in G$ then we have proved the following estimate (see \eqref{3} through \eqref{5}):
\begin{align*}
\sup_{\lambda\in\RR} \| \widehat{K_S}(\lambda)\|_{{\rm HS}}&\leq
C \left(\int_\RR
\| \widehat{K_S}(\lambda)\|^2_{{\rm HS}} d\lambda + \int_\RR
\left \| \frac{d}{d\lambda} \widehat{K_S}(\lambda)\right \|^2_{{\rm HS}} d\lambda \right)\\
&= C\left( \int_{N\times N \times \RR} |\widehat{K_S} (y,y^\prime,\lambda)|^2 \,dy \,dy^\prime  \,d\lambda
+ \int_{N\times N \times \RR} \left | \frac{d}{d\lambda} \widehat{K_S}(y,y^\prime,\lambda) \right |^2
\, dy\,  dy^\prime\, d\lambda \right)\\
&= C\left( \int_{N\times N \times \RR} |K_S (y,y^\prime,t)|^2 (1+t^2)
\, dy\,  dy^\prime\,  dt\right)
\end{align*}
where, as before, we make a small abuse of notation and keep the same symbol for the smoothing kernel
$\widehat{K_S}(\lambda)$ and the smoothing operator it defines on N.
A similar argument shows that, more generally,
\begin{equation}\label{8}
\sup_{\lambda\in\RR}\lambda^{2\ell} \| \widehat{K_S}(\lambda)\|_{{\rm HS}}\leq C \int_{N\times N\times \RR}
|\pa_t^\ell K_S (y,y^\prime,t)|^2   (1+t^2)  dy\,dy^\prime \,dt\,.
\end{equation}
In particular, taking $\ell=0$ and $\ell=1$ and adding we obtain the estimate
\begin{equation}\label{9}
 \| \widehat{K_S}(\lambda)\|_{{\rm HS}}\leq \frac{C}{1+\lambda^2}
\left( \int_{N\times N\times \RR}
|K_S (y,y^\prime,t)|^2   (1+t^2)  dy\,dy^\prime \,dt\,+ \int_{N\times N\times \RR}
|\pa_t K_S (y,y^\prime,t)|^2   (1+t^2)  dy\,dy^\prime \,dt\,.
\right)
\end{equation}
and, hence,
\begin{equation}\label{9.5}
 \| \widehat{K_S}(\lambda)\|_{{\rm HS}}\leq \frac{C}{1+\lambda^2}
\left( \int_{N\times N\times \RR}
|K_S (y,y^\prime,t)|^2   \exp\left(\frac{\epsilon}{2}|t|\right)  dy\,dy^\prime \,dt\,+ \int_{N\times N\times \RR}
|\pa_t K_S (y,y^\prime,t)|^2    \exp\left(\frac{\epsilon}{2}|t|\right)   dy\,dy^\prime \,dt\,.
\right)
\end{equation}
Let now $\mathcal{S}\in\G$, $\mathcal{S}=\sum \mathcal{S}(g) g$, and let $f(\lambda)$ be the function on $\Gamma$
defined by  $f(\lambda)(g):= \| \widehat{\mathcal{S}}(\lambda)(g)\|$. Since $\mathcal{S}$ is a translation-invariant
$\B^\infty$-smoothing
operator with $\epsilon$-bounds we do know that for any $m\in\NN$
$$
\sum_{g\in\Gamma} |K_{\mathcal{S}(g)} (y,y^\prime,t)|^2   \exp(\epsilon |t|)  (1+|g|)^{2m}
+
|\pa_t K_{\mathcal{S}(g)} (y,y^\prime,t)|^2    \exp(\epsilon |t|)  (1+|g|)^{2m}
$$
is convergent and uniformly bounded on $N\times N\times \RR$.
Proceeding precisely as in the steps leading to the proof of \eqref{6.75}
and using \eqref{9.5} we conclude that
the following fundamental estimate holds true:
\begin{equation}\label{10}
\nu_m^2  ( f (\lambda))\equiv \sum_{g\in\Gamma} \| \widehat{\mathcal{S}}(\lambda)(g)\| (1+|g|)^{2m}\leq \frac{C}{1+\lambda^2}
\end{equation}
If now $\mathcal{S}\in\G$ and $h(\lambda)$ is the function on $\Gamma$ defined by
$h(\lambda)(g):= \| \| \widehat{\mathcal{S}}(\lambda)(g)\|_1$ then, similarly,
\begin{equation}\label{11}
\nu_m^2  ( h (\lambda))\equiv \sum_{g\in\Gamma} \| \widehat{\mathcal{S}}(\lambda)(g)\|_1 (1+|g|)^{2m}\leq \frac{C}{1+\lambda^2}
\end{equation}
Indeed, it suffices to observe as before that if $S\in G$ then for $k>\dim N$
$$\|\widehat{S}(\lambda)\|_1 \leq \| (1+\Delta_N)^{-k} \|_1 \|(1+\Delta_N)^k \widehat{S}(\lambda)\|\leq C \|(1+\Delta_N)^k \widehat{S}(\lambda)\|$$
and the  term on the right hand side can be analyzed as before. The proof of the claim, and thus of
Proposition \ref{prop:extension} is now complete.
\end{proof}

\section{The higher Atiyah-Patodi-Singer index formula}\label{sect:theorem}

We are now ready to state and prove the main result of this paper.
Let $c\in Z^k(\Gamma;\CC)$, $k=2p$, be a  normalized group cocycle.
We assume that $\Gamma$ satisfies the (RD)-condition and that $c$
has polynomial growth. We know that under these assumptions
the cyclic cocycle $\tau_c$ extends from $\mathcal{J}_f$ to $\mathcal{J}$ and
our goal is to give a formula for the higher APS
index $$\Ind_{(c,\Gamma)} (\D):=\langle \Ind_\infty(\D),[\tau_c] \rangle$$
with $\Ind_\infty(\D)\in K_0 (\mathcal{J})$ the index class associated to $\D$.

Recall, see Subsection \ref{subsect:excision}, that
if $\A$ and $\G$ are Fr\'echet algebras  and
$I : \A \to \G$ denotes  a  bounded homomorphism, then
the relative group $K_0(\A,\G)$  is by definition $K_0(\A^+,\G^+)$; the latter
 is    the abelian group obtained from equivalence classes
of  triplets
$(e_1, e_0, p_t)$ with
$e_0$ and $e_1$  projections in  $M_{n\times n}(\A^+)$, and
$p_t$  a continuous family of projections in $M_{n\times n}(\G^+)$, $t\in [0,1]$, satisfying
$I (e_i)=p_i$ for $i=0,1$. As already explained, there is a paring
$K_0(\A, \G) \times HC^{2p}(\A, \G) \to \CC$,
which in this case takes the form
$$
\langle [(e_1, e_0, p_t)], [(\tau, \sigma)] \rangle
= {\rm const}_{2p} \left[ \left( \tau(e_1, \dots ,e_1) - \tau(e_0, \dots ,e_0)
-  \sum_{i=0}^{2p} \int_0^1\sigma(p_t,\dots,[\dot{p}_t, p_t],\dots,p_t)dt
\right)\right]
$$
Here ${\rm const}_{2p}:= (-1)^p \frac{(2p)!}{p!}$ and  the commutator appears
at the $i$-th position in the $i$-th summand.
We denote $\tau(e_i, \dots ,e_i)$ simply as $\tau(e_i)$.

We know that associated to a normalized group cocycle $c\in Z^k (\Gamma;\CC)$, $k=2p$, there is a relative cyclic cocycle
$[(\tau^r_c,\sigma_c)]\in HC^{2p} (\A,\G)$ and a relative index class
$\Ind_\infty (\D,\D_{\partial})\in K_0 (\A,\G)$.
We can thus consider, in particular, the pairing
$\langle \Ind_\infty (\D,\D_{\partial}),[(\tau^r_c,\sigma_c)]\rangle$.

Our immediate goal is to show the following crucial identity:
\begin{equation}\label{prot-aps}
\langle \Ind_\infty(\D),[\tau_c] \rangle= \langle \Ind_\infty (\D,\D_{\partial}),[(\tau^r_c,\sigma_c)]\rangle
\end{equation}

The left hand side of formula \eqref{prot-aps} can be written in terms of
the $b$-Connes-Moscovici projector $V^b_{\D^\otimes}$ as
 $$
 \langle [V^b_{\D^\otimes}] -[e_1], \tau_{c} \rangle\,.$$
Recall that if $\beta_{{\rm ex}}: K_0 (\J)\to K_0 (\A,\G)$
is the excision isomorphism then
$$\beta_{{\rm ex}} (  [V^b_{\D^\otimes}] -[e_1])=[V^b_{\D^\otimes},e_1,{\bf c}]\,,$$ with ${\bf c}$ the constant path with value $e_1$.
Since the derivative of the constant path is equal to zero
and since, by its very definition, $\tau^r_{c}|_{\mathcal{J}}=\tau_{c}$,
 we obtain at once the
crucial relation
\begin{equation}\label{restriction-of-reg}
\langle \beta_{{\rm ex}}  (  [V^b_{\D^\otimes}] -[e_1] ), [(\tau^r_{c},\sigma_{c})] \rangle
= \langle     [V^b_{\D^\otimes}] -[e_1], [\tau_{c}] \rangle \,.
\end{equation}
Now we use the excision formula, asserting that
$\beta_{{\rm ex}} ( [V^b_{\D^\otimes}] -[e_1])$
is equal, {\it as a relative class},
to $[V_{\D^\otimes}, e_1, V_{t D^\otimes_{{\rm cyl}}}]$, $t\in[1,+\infty]$. Thus
$$\langle [V_{\D^\otimes}, e_1, V_{t D^\otimes_{{\rm cyl}}}], [(\tau^r_{c},\sigma_{c})] \rangle= \langle   [V^b_{\D^\otimes}] -[e_1], [\tau_{c}] \rangle$$
which is precisely $ \langle \Ind_\infty (\D,\D_{\partial}),[(\tau^r_c,\sigma_c)]\rangle=
\langle \Ind_\infty(\D),[\tau_c] \rangle$. 

We therefore obtain
\begin{align*}
\Ind_{(c,\Gamma)} (\D)&:= \langle \Ind_\infty (\D), [\tau_{c}] \rangle\\
&\equiv  
\langle  [V^b_{\D^\otimes}] -[e_1], [\tau_{c}] \rangle\\
&= 
\langle \beta_{{\rm ex}} ([V^b_{\D^\otimes}] -[e_1]), [(\tau^r_{c},\sigma_{c})] \rangle \\
&= 
\langle [V_{\D^\otimes}, e_1, V_{t D^\otimes_{{\rm cyl}}}], [(\tau^r_{c},\sigma_{c})] \rangle
\end{align*}

In order to compute this expression we recall
the   equality of relative classes
\[ [V_{\D^\otimes}, e_1, V_{t D^\otimes_{{\rm cyl}}}]=
[V_{\D^\otimes}^*, e_1, V_{t D^\otimes_{{\rm cyl}}}^*],\ t\in[1,+\infty].
\]
Therefore if we set $\dvtd:= V_{\D^\otimes}\oplus V_{\D^\otimes}^*$, $\del=e_1\oplus e_1$, $p_t= V_{t D^\otimes_{{\rm cyl}}}\oplus V_{t D^\otimes_{{\rm cyl}}}^*$,
we obtain a relative class $[\dvtd, \del, p_t]$ satisfying
\[
\langle [V_{\D^\otimes}, e_1, V_{t D^\otimes_{{\rm cyl}}}], [(\tau^r_{c},\sigma_{c})] \rangle = \frac{1}{2}\langle [\dvtd, \del, p_t], [(\tau^r_{c},\sigma_{c})] \rangle
\]

Using  the definition of the relative pairing we can thus write 

\begin{align*}
\Ind_{(c,\Gamma)} (\D)&= 
\frac{1}{2}\langle [\dvtd, \del, p_t], [(\tau^r_{c},\sigma_{c})] \rangle\\
&: = \frac{{\rm const}_{2p}}{2} \,\tau^r_{c} (\dvtd -\del)-
\frac{{\rm const}_{2p}}{2} \left[ \sum_{i=0}^{2p}   \int_1^{\infty}\sigma_c (p_t,\dots,[\dot{p}_t, p_t],\dots,p_t)dt
\right]
\,,\\
& =  \frac{{\rm const}_{2p}}{2} \,\tau^r_{c} (\dvtd)-
\frac{{\rm const}_{2p}}{2} \left[ \sum_{i=0}^{2p}   \int_1^{\infty}\sigma_c (p_t,\dots,[\dot{p}_t, p_t],\dots,p_t)dt
\right]
\end{align*}
The convergence at infinity of
 $\left[ \sum_{i=0}^{2p}   \int_1^{\infty}\sigma_c (p_t,\dots,[\dot{p}_t, p_t],\dots,p_t)dt
\right]$
 follows from the fact that the pairing
is well defined but can also be proved directly, using the properties of the heat
kernel and the invertibility of $\D_{{\rm cyl}}$.

Replace now $\D$ by $u\D$, $u>0$.
We obtain, after a simple change of variable in the integral,
$${\rm const}_{2p}\,
\left[ \sum_{i=0}^{2p}   \int_u^{\infty}\sigma_c (p_t,\dots,[\dot{p}_t, p_t],\dots,p_t)dt
\right]
=-2\langle \Ind_\infty (u\D), [\tau_{c}] \rangle
+{\rm const}_{2p} \,\tau^r_{c} (\dvtd)$$
But the absolute pairing  $\langle \Ind_\infty (u\D), [\tau_{c}] \rangle$ in independent of $u$ and equal to $\Ind_{(c,\Gamma)} (\D)$; thus
$${\rm const}_{2p}\,
\left[ \sum_{i=0}^{2p}   \int_u^{\infty}\sigma_c (p_t,\dots,[\dot{p}_t, p_t],\dots,p_t)dt
\right]
=-2\langle \Ind_\infty (\D), [\tau_{c}] \rangle
+{\rm const}_{2p} \,\tau^r_{c} (\dvtd)$$
Now,
by a well-known principle, see \cite[Chapter 8]{Melrose} we know that the short-time behaviour
of the $b$-trace of the heat-kernel
 is computable as in the closed case,
using Getzler-rescaling. Thus, keeping in mind Proposition \ref{prop:short-time},
we can prove that
the second summand
of the right hand side converges
as $u\downarrow 0$ to $2 \int_{M_0} {\rm AS}\wedge \omega_{c}$.
Thus the limit
$$\frac{{\rm const}_{2p}}{2}\,\lim_{u\downarrow 0}
\left[ \sum_{i=0}^{2p}   \int_u^{\infty}\sigma_c (p_t,\dots,[\dot{p}_t, p_t],\dots,p_t)dt
\right]$$
exists
and  is equal to $\int_{M_0} {\rm AS}\wedge \omega_{c}-\Ind_{(c,\Gamma)} (\D)$.

\begin{definition}\label{def:higher-eta}
If  $\Gamma$ satisfies  the (RD) condition and
 $c\in Z^k(\Gamma;\CC)$, $k=2p$ is  a normalized group cocycle (as in \eqref{groupcoc}) of polynomial growth, then we
 define the higher eta invariant associated to $c$ and the boundary operator
 $\D_{\partial}$ as
 \begin{equation}
 \eta_{(c,\Gamma)} (\D_{\partial}):=
 {\rm const}_{2p}\,
\left[ \sum_{i=0}^{2p}   \int_0^{\infty}\sigma_c (p_t,\dots,[\dot{p}_t, p_t],\dots,p_t)dt
\right]
 \end{equation}
 with $p_t= V_{t D^\otimes_{{\rm cyl}}}\oplus V_{t D^\otimes_{{\rm cyl}}}^*$.
\end{definition}


If $N$ is any closed compact manifold, not necessarily a boundary,
and $\Gamma\to \widetilde{N}\to N$ is a Galois $\Gamma$-covering, then
it should be possible to prove, using Getzler rescaling, that the limit
$$\lim_{u\downarrow 0}
\left[ \sum_{i=0}^{2p}   \int_u^{\infty}\sigma_c (p_t,\dots,[\dot{p}_t, p_t],\dots,p_t)dt
\right]
$$
exists. This would allow to define the higher eta invariant  $\eta_{(c,\Gamma)} (\D_{N})$
in general, even for non-bounding
coverings.

\smallskip
\noindent
The arguments given before Definition \ref{def:higher-eta} prove the main result of this paper:

\begin{theorem}\label{theo:main}
Let $\Gamma$ be a finitely generated discrete group satisfying the (RD) condition and
let $c\in Z^k(\Gamma;\CC)$, $k=2p$, be  a normalized group cocycle of polynomial growth.
Let $\Gamma\to \widetilde{M}_0\to M_0$ be a Galois $\Gamma$-covering of
a compact even dimensional manifold with boundary $M_0$, endowed with a Riemannian metric
$g_0$ and a bundle of
unitary Clifford modules $E_0$ with  Clifford connection $\nabla_0$. We assume that all these structures are of product-type near the boundary. Let
$\Gamma\to \widetilde{M}\to M$ be the associated Galois covering with cylindrical ends and let $g$, $E$ and $\nabla$ be the extended structures.
Let $D$ and $\widetilde{D}$ be the associated Dirac operators
and let $\D$ be the operator $D$ twisted by the $\B^\infty$-Mishchenko bundle.
Let us make
the assumption that $\widetilde{D}_{\partial}$ is $L^2$-invertible. Then there is
a well defined higher index $\Ind_{(c,\Gamma)} (\D)$ and the following higher
Atiyah-Patodi-Singer formula holds:

\begin{equation}\label{main}
\Ind_{(c,\Gamma)} (\D)=\int_{M_0} {\rm AS}\wedge \omega_{c} -\frac{1}{2} \eta_{(c,\Gamma)} (\D_{\partial})\,.
\end{equation}
\end{theorem}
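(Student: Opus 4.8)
The plan is to assemble the theorem from the machinery developed in the preceding sections, essentially by tracking the index class through excision and then computing the resulting relative pairing by a short-time limit argument. First I would invoke Assumption \ref{assumption:invertibility} to guarantee, via the discussion in Section \ref{section:geometric} and in Subsection \ref{subsect:index}, that the Mishchenko-Fomenko $b$-operator $\D^\otimes$ has an invertible boundary operator, so that the index class $\Ind_\infty(\D)\in K_0(\mathcal{J})$ and the relative index class $\Ind_\infty(\D,\D_\partial)\in K_0(\mathcal{A},\mathcal{G})$ are both well defined, represented respectively by $[V^b_{\D^\otimes}]-[e_1]$ and by the triple $[V_{\D^\otimes}, e_1, V_{t\D^\otimes_{\cyl}}]$. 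The hypothesis that $\Gamma$ satisfies (RD) and that $c$ has polynomial growth lets me apply Proposition \ref{prop:extension} to know that $\tau_c$, $\tau^r_c$ and $\sigma_c$ all extend continuously to the full Fr\'echet algebras, so that $(\tau^r_c,\sigma_c)$ is a genuine relative cyclic cocycle for $\mathcal{A}\xrightarrow{I}\mathcal{G}$ and all the pairings below make sense.

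Next I would establish the key identity \eqref{prot-aps}, namely $\langle \Ind_\infty(\D),[\tau_c]\rangle = \langle \Ind_\infty(\D,\D_\partial),[(\tau^r_c,\sigma_c)]\rangle$. This follows by combining two facts: first, since $\tau^r_c$ restricts to $\tau_c$ on $\mathcal{J}$ and since the excision isomorphism $\beta_{\rm ex}$ of Subsection \ref{subsect:excision} sends $[V^b_{\D^\otimes}]-[e_1]$ to the triple with constant path $[V^b_{\D^\otimes},e_1,\mathbf{c}]$, whose path-derivative contributes nothing to the relative pairing, we get $\langle\beta_{\rm ex}([V^b_{\D^\otimes}]-[e_1]),[(\tau^r_c,\sigma_c)]\rangle = \langle[V^b_{\D^\otimes}]-[e_1],[\tau_c]\rangle$; second, Theorem \ref{theo:excision} (the excision formula \eqref{excision-for-ind}) identifies $\beta_{\rm ex}([V^b_{\D^\otimes}]-[e_1])$, \emph{as a relative class}, with $[V_{\D^\otimes}, e_1, V_{t\D^\otimes_{\cyl}}]$, which is precisely $\Ind_\infty(\D,\D_\partial)$ by \eqref{was-otimes}. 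Symmetrizing via $\dvtd = V_{\D^\otimes}\oplus V_{\D^\otimes}^*$, $\del = e_1\oplus e_1$, $p_t = V_{t\D^\otimes_{\cyl}}\oplus V_{t\D^\otimes_{\cyl}}^*$, and writing out the explicit relative pairing formula from the start of this section, I obtain
\[
\Ind_{(c,\Gamma)}(\D) = \frac{{\rm const}_{2p}}{2}\,\tau^r_c(\dvtd) - \frac{{\rm const}_{2p}}{2}\sum_{i=0}^{2p}\int_1^\infty \sigma_c(p_t,\dots,[\dot p_t,p_t],\dots,p_t)\,dt,
\]
where the $\tau^r_c(\del)$ term drops out because $\del$ lies in the unitalization and $c$ is normalized.

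The final step is the rescaling-and-short-time-limit argument. Replacing $\D$ by $u\D$ with $u>0$ and performing the change of variable $t\mapsto t/u$ in the integral, and using that the absolute pairing $\langle\Ind_\infty(u\D),[\tau_c]\rangle$ is independent of $u$ (homotopy invariance) and equals $\Ind_{(c,\Gamma)}(\D)$, I obtain
\[
{\rm const}_{2p}\sum_{i=0}^{2p}\int_u^\infty \sigma_c(p_t,\dots,[\dot p_t,p_t],\dots,p_t)\,dt = -2\,\Ind_{(c,\Gamma)}(\D) + {\rm const}_{2p}\,\tau^r_c(\dvtd).
\]
Now I let $u\downarrow 0$. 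The term ${\rm const}_{2p}\,\tau^r_c(\dvtd)$ is the $b$-trace of (a heat-kernel expression built from) the Connes-Moscovici projector, and by Melrose's principle (\cite[Chapter 8]{Melrose}) its short-time behaviour is computed exactly as in the closed case by Getzler rescaling; keeping Proposition \ref{prop:short-time} in mind, this quantity converges to $2\int_{M_0} {\rm AS}\wedge\omega_c$. Consequently the left-hand side has a limit as $u\downarrow 0$, and defining the higher eta invariant $\eta_{(c,\Gamma)}(\D_\partial)$ as that limiting integral (Definition \ref{def:higher-eta}), rearranging yields the desired formula \eqref{main}. The main obstacle in this plan is the Getzler-rescaling short-time limit of the $b$-trace term: one must justify that the non-compact, cylindrical-end geometry does not spoil the local index computation, which is exactly where the $b$-calculus heat-kernel estimates of Melrose and the extension to the Mishchenko-Fomenko/higher setting (as in Connes-Moscovici and Moscovici-Wu) are essential; everything else is bookkeeping with excision and the relative pairing.
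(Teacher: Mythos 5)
Your proposal is correct and follows essentially the same route as the paper's own proof: the excision identity \eqref{prot-aps} via the constant-path triple and Theorem \ref{theo:excision}, symmetrization, the explicit relative pairing, rescaling $\D\mapsto u\D$ with homotopy invariance of the absolute pairing, and the short-time limit of the $b$-trace term via Getzler rescaling and Proposition \ref{prop:short-time}, leading to Definition \ref{def:higher-eta}. The only cosmetic remark is that after the substitution $\D\mapsto u\D$ the term $\tau^r_c(\dvtd)$ should be read as depending on $u$ (it is the symmetrized Connes--Moscovici projector for $u\D$), which you implicitly do when taking the $u\downarrow 0$ limit.
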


Recall now that $\Ind_{(c,\Gamma)} (\D)=\langle \Ch (\Ind_{{\rm MF},\infty}(\D)),t_c \rangle_K $, see \eqref{compatibility};
the Atiyah-Patodi-Singer
formula for the right hand side, proved in \cite[Theorem 12.7]{LPMEMOIRS} and \cite[Appendix]{LPBSMF}, reads
\begin{equation}\label{APS-LP}
\langle \Ch (\Ind_{{\rm MF},\infty}(\D)),t_c \rangle_K =
\int_{M_0} {\rm AS}\wedge \omega_c -\frac{1}{2} \langle \eta_{{\rm Lott}} (\D_{\partial}),t_c
\rangle
\end{equation}
with
$$\eta_{{\rm Lott}} (\D_{\partial})\in
\widehat{\Omega}_* (\B^\infty)/\overline{[\widehat{\Omega}_* (\B^\infty),\widehat{\Omega}_* (\B^\infty)]}$$
 the higher eta invariant of Lott \cite{LottII}.
By using the identity $\Ind_{(c,\Gamma)} (\D)=\langle \Ch (\Ind_{{\rm MF},\infty}(\D)),t_c \rangle_K$ and by comparing the two APS index formulae, we obtain, as a
corollary, the following interesting equality:
\begin{equation}\label{eta-lott=eta-gmp}
\langle \eta_{{\rm Lott}}(\D_{\partial}),t_c
\rangle
= \eta_{(c,\Gamma)} (\D_{\partial})
\,.
\end{equation}
%

{\small \bibliographystyle{plain}
\bibliography{rel-coverings}
}

\end{document}